\numberwithin{equation}{section}
\newtheorem{theorem}{Theorem}[section]
\newtheorem{lemma}[theorem]{Lemma}
\newtheorem{corollary}[theorem]{Corollary}
\theoremstyle{definition}
\newtheorem{definition}{Definition}[section]
\newtheorem*{remark}{Remark}
\newtheorem{example}{Example}[section]
\newcommand{\citu}[2]{\!\!\cite[#2]{#1}}
\newcommand{\bibb}[1]{\left\{#1\right\}}
\newcommand{\smbb}[1]{\left(#1\right)}
\newcommand{\tf}{\tfrac}
\newcommand{\al}{\alpha}
\newcommand{\be}{\beta}
\newcommand{\ga}{\gamma}
\newcommand{\de}{\delta}
\newcommand{\la}{\lambda}
\newcommand{\si}{\sigma}
\newcommand{\ze}{\zeta}
\newcommand{\om}{\omega}
\newcommand{\vPsi}{\varPsi}
\newcommand{\ud}{\mathrm{d}}
\newcommand{\ui}{\mathrm{i}}
\newcommand{\Li}{\mathrm{Li}}
\newcommand{\Lii}{\,\mathrm{Li}}
\newcommand{\imm}{\,\mathrm{Im}}
\newcommand{\ol}{\overline}
\def\a{^{(A)}}
\def\b{^{(B)}}
\def\c{^{(C)}}
\def\t{\tilde{t}}
\def\S{\tilde{S}}
\newcommand{\tmod}[1]{{\@displayfalse\pmod{#1}}}
\newdimen\bibspace
\renewenvironment{thebibliography}[1]{%
 \section*{\refname 
       \@mkboth{\MakeUppercase\refname}{\MakeUppercase\refname}}%
     \list{\@biblabel{\@arabic\c@enumiv}}%
          {\settowidth\labelwidth{\@biblabel{#1}}%
           \leftmargin\labelwidth
           \advance\leftmargin\labelsep
           \itemsep\bibspace
           \parsep\z@skip     %
           \@openbib@code
           \usecounter{enumiv}%
           \let\p@enumiv\@empty
           \renewcommand\theenumiv{\@arabic\c@enumiv}}%
     \sloppy\clubpenalty4000\widowpenalty4000%
     \sfcode`\.\@m}
    {\def\@noitemerr
      {\@latex@warning{Empty `thebibliography' environment}}%
     \endlist}
\begin{document}

\title{\bf\boldmath{Dirichlet type extensions of Euler sums}}
\author
{
{
Ce Xu$\,^{a,}$\thanks{\emph{E-mail address\,:} cexu2020@ahnu.edu.cn (Ce Xu).}
\quad
Weiping Wang$\,^{b,}$\thanks{Corresponding author. \emph{E-mail addresses\,:}
wpingwang@126.com, wpingwang@zstu.edu.cn (Weiping Wang).}
}\\
\small $a.$ School of Mathematics and Statistics, Anhui Normal University,
    Wuhu 241002, P.R. China\\
\small $b.$ School of Science, Zhejiang Sci-Tech University,
    Hangzhou 310018, P.R. China
}

\date{}
\maketitle

\vspace{-0.5cm}
\begin{center}
\parbox{6.3in}{\small{\bf Abstract}\vspace{3pt}

\hspace{3.5ex}In this paper, we study the alternating Euler $T$-sums and $\S$-sums, which are infinite series involving (alternating) odd harmonic numbers, and have similar forms and close relations to the Dirichlet beta functions. By using the method of residue computations, we establish the explicit formulas for the (alternating) linear and quadratic Euler $T$-sums and $\S$-sums, from which, the parity theorems of Hoffman's double and triple $t$-values and Kaneko-Tsumura's double and triple $T$-values are further obtained. As supplements, we also show that the linear $T$-sums and $\S$-sums are expressible in terms of colored multiple zeta values. Some interesting consequences and illustrative examples are presented.
}

\vspace{6pt}
\parbox{6.3in}{\small{\emph{AMS classification\,:}}\,
11A07; 11M32; 40A25}

\vspace{3pt}
\parbox{6.3in}{\small{\emph{Keywords\,:}}\,
Multiple zeta values; Euler sums; Hoffman $t$-values; Kaneko-Tsumura $T$-values; Contour integration; Residue theorem}
\end{center}


\setcounter{tocdepth}{2}
\tableofcontents


\section{Introduction}\label{Sec.Intro}

The classical Euler sums are of the form
\[
S_{p_1p_2\cdots p_k,q}
    =\sum_{n=1}^\infty\frac{H_n^{(p_1)}H_n^{(p_2)}\cdots H_n^{(p_k)}}{n^q}\,,
\]
where $p_1,p_2,\ldots,p_k,q$ are positive integers with $p_1\leq p_2\leq\cdots\leq p_k$ and $q>1$. Here $H_n^{(r)}$ stand for the \emph{generalized harmonic numbers}, which are defined by $H_0^{(r)}=0$ and $H_n^{(r)}=\sum_{k=1}^n1/k^r$, for $n,r=1,2,\ldots$, with $H_n\equiv H_n^{(1)}$. This kind of infinite series can be traced back to Euler and Goldbach (see Berndt \cite[p. 253]{Berndt85.1}). They are essential to the connection of knot theory with quantum field theory \cite{Br2013,Kassel95}, and became even more important when higher order calculations in quantum electrodynamics (QED) and quantum chromodynamics (QCD) started to need the multiple harmonic sums \cite{Bl1999,BBV2010}. For an brief introduction of more applications, the readers may consult in Flajolet and Salvy \cite[Section 1]{FlSa98}.

In the previous paper \cite{XuWang19.TVE}, we introduced and studied the following two variants of the classical Euler sums:
\begin{equation}\label{Euler.TS}
T_{p_1p_2\cdots p_k,q}
    =\sum_{n=1}^\infty\frac{h_{n-1}^{(p_1)}h_{n-1}^{(p_2)}\cdots h_{n-1}^{(p_k)}}{(n-1/2)^q}
\quad\text{and}\quad
\S_{p_1p_2\cdots p_k,q}
    =\sum_{n=1}^\infty\frac{h_n^{(p_1)}h_n^{(p_2)}\cdots h_n^{(p_k)}}{n^q}\,,
\end{equation}
where $h_n^{(r)}$ are the \emph{odd harmonic numbers of order $r$} defined by
\[
h_0^{(r)}=0\quad\text{and}\quad
h_n^{(r)}=\sum_{k=1}^n\frac{1}{(k-1/2)^r}\,,\quad\text{for } n,r=1,2,\ldots,
\]
with $h_n\equiv h_n^{(1)}$. The quantity $w=p_1+\cdots+p_k+q$ is called the \emph{weight} of the corresponding sum, and the quantity $k$ is called the \emph{degree} (or \emph{order}). We call these two kinds of sums the \emph{Euler $T$-sums} and the \emph{Euler $\S$-sums}, respectively. Similarly to the classical case, the Euler $T$-sums and $\S$-sums can be evaluated by the method of contour integral representations and residue computations developed by Flajolet and Salvy \cite{FlSa98}. As a consequence, we established in \cite{XuWang19.TVE} many explicit evaluations of the Euler $T$-sums and $\S$-sums via $\ln(2)$, multiple zeta values and Hoffman's multiple $t$-values. Besides the $T$-sums and $\S$-sums, some other general Euler type sums were discussed systematically in \cite{X2020.EFG,X2020.EFS}.

Here, for $(s_1,s_2,\ldots,s_k)\in\mathbb{N}^k$ with $s_1>1$, the \emph{multiple zeta values} (MZVs) \cite{Hoff92,Zag92} and the \emph{multiple $t$-values} (MtVs) \cite{Hoff19.AOV} are defined by
\[
\ze(s_1,s_2,\ldots,s_k)=\sum_{n_1>n_2>\cdots>n_k\geq 1}
    \frac{1}{n_1^{s_1}n_2^{s_2}\cdots n_k^{s_k}}
\]
and
\begin{align*}
t(s_1,s_2,\ldots,s_k)
&=\sum_{\substack{n_1>n_2>\cdots>n_k\geq 1\\n_i\text{ odd}}}
    \frac{1}{n_1^{s_1}n_2^{s_2}\cdots n_k^{s_k}}\\
&=\sum_{n_1>n_2>\cdots>n_k\geq 1}\frac{1}{(2n_1-1)^{s_1}(2n_2-1)^{s_2}\cdots(2n_k-1)^{s_k}}\,,
\end{align*}
respectively. As a normalized version of the multiple $t$-values, we call
\begin{align*}
\t(s_1,s_2,\ldots,s_k)
&=\sum_{n_1>n_2>\cdots>n_k\geq 1}\frac{1}{(n_1-1/2)^{s_1}(n_2-1/2)^{s_2}\cdots(n_k-1/2)^{s_k}}\\
&=2^{s_1+s_2+\cdots+s_k}t(s_1,s_2,\ldots,s_k)
\end{align*}
the \emph{multiple $\t$-values}. According to the definitions, $\t(s)=2^st(s)=\ze(s,\frac{1}{2})=(2^s-1)\ze(s)$ for integer $s\geq 2$, where $\ze(s,a)$ is the \emph{Hurwitz zeta function} and $\ze(s)$ is the \emph{Riemann zeta function}.

Moreover, in the above definitions, we put a bar on the top of $s_j$ for $j=1,2,\ldots k$ if there is a sign $(-1)^{n_j}$ appearing in the numerator on the right. Those involving one or more of the $s_j$ barred are called the \emph{alternating multiple values}. For example,
\[
t(\bar{s}_1,\bar{s}_2,s_3,s_4)=\sum_{n_1>n_2>n_3>n_4\geq1}
    \frac{(-1)^{n_1+n_2}}{(2n_1-1)^{s_1}(2n_2-1)^{s_2}(2n_3-1)^{s_3}(2n_4-1)^{s_4}}\,.
\]
In particular, we have
\begin{equation}\label{AZV}
\ze(\bar{s})=\sum_{n=1}^{\infty}\frac{(-1)^n}{n^s}=(2^{1-s}-1)\ze(s)\quad\text{and}\quad
t(\bar{s})=\sum_{n=1}^\infty\frac{(-1)^n}{(2n-1)^s}=-\be(s)\,,
\end{equation}
with $\ze(\bar{1})=-\ln(2)$. Here $\be(s)$ is the \emph{Dirichlet beta function} satisfying
\[
\be(2)=G\,,\quad
\be(2k+1)=\frac{(-1)^kE_{2k}}{2^{2k+2}(2k)!}\pi^{2k+1}\,,\quad\text{for }k=0,1,2,\ldots,
\]
where $G$ represents \emph{Catalan's constant}, and $E_n$ are the \emph{Euler numbers}. For simplicity, in the present paper, for $(s_1,s_2,\ldots,s_k)\in\mathbb{N}^k$ and $(\si_1,\si_2,\ldots,\si_k)\in\{\pm1\}^k$ with $(s_1,\si_1)\neq (1,1)$, we denote the (alternating) multiple zeta values and multiple $t$-values in the following unified forms:
\[
\ze(s_1,s_2,\ldots,s_k;\si_1,\si_2,\ldots,\si_k)
    =\sum_{n_1>n_2>\cdots>n_k\geq 1}\frac{\si_1^{n_1}\si_2^{n_2}\cdots\si_k^{n_k}}
        {n_1^{s_1}n_2^{s_2}\cdots n_k^{s_k}}
\]
and
\begin{align*}
\t(s_1,s_2,\ldots,s_k;\si_1,\si_2,\ldots,\si_k)
    &=\sum_{n_1>n_2>\cdots>n_k\geq 1}\frac{\si_1^{n_1}\si_2^{n_2}\cdots\si_k^{n_k}}
        {(n_1-1/2)^{s_1}(n_2-1/2)^{s_2}\cdots(n_k-1/2)^{s_k}}\\
    &=2^{s_1+s_2+\cdots+s_k}t(s_1,s_2,\ldots,s_k;\si_1,\si_2,\ldots,\si_k)\,,
\end{align*}
respectively.

Because of the congruence condition in the summation of the MtV $t(s_1,s_2,\ldots,s_k)$ and of the fact that this value can be expressed as a linear combination of alternating MZVs, Kaneko and Tsumura \cite{KaTs19} regarded this value as a \emph{MZV of level two}. In fact, they \cite{KaTs18,KaTs19} introduced and studied another variant of MZVs of level two:
\begin{align*}
T(s_1,s_2,\ldots,s_k)
&=2^k\sum_{\substack{n_1>n_2>\cdots>n_k\geq1\\n_i\equiv k-i+1\tmod{2}}}
    \frac{1}{n_1^{s_1}n_2^{s_2}\cdots n_k^{s_k}}\\
&=2^k\sum_{n_1>n_2>\cdots>n_k\geq 1}
    \frac{1}{(2n_1-k)^{s_1}(2n_2-k+1)^{s_2}\cdots(2n_k-1)^{s_k}}\,,
\end{align*}
which are called the \emph{multiple $T$-values} (MTVs). Similarly, the corresponding alternating MTVs are defined by
\begin{align*}
&T(s_1,s_2,\ldots,s_k;\si_1,\si_2,\ldots,\si_k)\\
&\quad=2^k\sum_{n_1>n_2>\cdots>n_k\geq 1}
    \frac{\si_1^{n_1}\si_2^{n_2}\cdots\si_k^{n_k}}
        {(2n_1-k)^{s_1}(2n_2-k+1)^{s_2}\cdots(2n_k-1)^{s_k}}\,.
\end{align*}

In this paper, we study mainly the alternating Euler $T$-sums and alternating Euler $\S$-sums, which are infinite series involving (alternating) odd harmonic numbers, and can be seen as the Dirichlet type extensions of the classical Euler sums, for the similar forms and close relations to the Dirichlet beta function.

The \emph{alternating harmonic numbers} ${\bar H}^{(p)}_n$ and the \emph{alternating odd harmonic numbers} ${\bar h}^{(p)}_n$ are defined by
\[
\bar{H}_0^{(r)}=0\,,\quad\bar{H}_n^{(r)}=\sum_{k=1}^n\frac{(-1)^{k-1}}{k^r}\,,
\quad\text{and}\quad
\bar{h}_0^{(r)}=0\,,\quad\bar{h}_n^{(r)}=\sum_{k=1}^n\frac{(-1)^{k-1}}{(k-1/2)^r}\,,
\]
respectively, for $n,r=1,2,\ldots$. In the definitions of the Euler $T$-sums and $\S$-sums, if we replace $h_n^{(p_j)}$ by $\bar{h}_n^{(p_j)}$ in the numerator of the summand, we put a bar on the top of parameter $p_j$. Additionally, if there is a sign $(-1)^{n-1}$ appearing in the summand, we put a bar on the top of parameter $q$. For example, following such conventions, we have
\[
T_{p_1\bar{p}_2\bar{p}_3,\bar{q}}
    =\sum_{n=1}^\infty(-1)^{n-1}
        \frac{h_{n-1}^{(p_1)}\bar{h}_{n-1}^{(p_2)}\bar{h}_{n-1}^{(p_3)}}{(n-1/2)^q}
        \quad\text{and}\quad
\S_{p_1\bar{p}_2\bar{p}_3\bar{p}_4,q}
    =\sum_{n=1}^\infty\frac{h_n^{(p_1)}{\bar h}_n^{(p_2)}\bar{h}_n^{(p_3)}\bar{h}_n^{(p_4)}}{n^q}\,.
\]
These two sums and some similar ones with one or more of the $p_j$ or $q$ barred are called the \emph{alternating Euler $T$-sums} and \emph{alternating Euler $\S$-sums}, respectively. For convenience, for $(p_1,p_2,\ldots,p_k,q)\in\mathbb{N}^{k+1}$ and $(\si_1,\si_2,\ldots,\si_k,\si)\in\{\pm 1\}^{k+1}$ with $(q,\si)\neq (1,1)$, denote the alternating case and non-alternating case of the above variants of Euler sums in unified forms:
\begin{align}
&T_{p_1,p_2,\ldots,p_k,q}^{\si_1,\si_2,\ldots,\si_k,\si}
    =\sum_{n=1}^\infty\si^{n-1}\frac{h_{n-1}^{(p_1)}(\si_1)h_{n-1}^{(p_2)}(\si_2)\cdots
        h_{n-1}^{(p_k)}(\si_k)}{(n-1/2)^q}\,,\label{Tsum.Unify}\\
&\S_{p_1,p_2,\ldots,p_k,q}^{\si_1,\si_2,\ldots,\si_k,\si}
    =\sum_{n=1}^\infty\si^{n-1}\frac{h_n^{(p_1)}(\si_1)h_n^{(p_2)}(\si_2)\cdots
        h_n^{(p_k)}(\si_k)}{n^q}\,,\label{Stsum.Unify}
\end{align}
where the numbers $h_n^{(p)}(\si)$ denote the odd harmonic numbers $h_n^{(p)}$ for $\si=1$ and the alternating odd harmonic numbers $\bar{h}_n^{(p)}$ for $\si=-1$. The paper is organized as follows.

In Section \ref{Sec.Notation.Exp}, we define the notations of some sums, and use these sums to give series expansions of parametric digamma function and parametric cotangent function. These sums and series expansions are further used in Sections \ref{Sec.lin}--\ref{Sec.Remark} to study the alternating Euler $T$-sums and alternating Euler $\S$-sums.

In particular, in Sections \ref{Sec.lin} and \ref{Sec.qua}, by applying the method of residue computations to specific kernel functions and base functions, we establish four general identities on sums defined in Section \ref{Sec.Notation.Exp}, which reduce to expressions of the linear and quadratic (alternating) Euler $T$-sums and $\S$-sums. Further special cases are discussed, and the evaluations of many specific Euler $T$-sums and $\S$-sums are presented. By establishing the transformation formulas between various Euler sums and multiple values, the parity theorems of Hoffman's double and triple $t$-values, and Kaneko-Tsumura's double and triple $T$-values are also established.

Additionally, as supplements, we introduce in Section \ref{Sec.lin} some necessary results on the linear Euler $R$-sums, which are used in the discussion of the quadratic Euler $\S$-sums as well as the general $\S$-sums. We show that the linear Euler $T,\S,R$-sums are expressible in terms of the colored multiple zeta values of level four, and determine the evaluations of more specific Euler sums, which can not be obtained by the method of residue theorem.

Finally, in Section \ref{Sec.Remark}, we give some remarks, and present the parity theorems of the general (alternating) Euler $T$-sums and $\S$-sums.


\section{Notations of sums and expansions of functions}\label{Sec.Notation.Exp}

Let $A=\{a_k\}_{k\in\mathbb{Z}}$ be a real sequence satisfying $\lim_{k\to\infty}a_k/k=0$. For convenience, denote $A_1$ and $A_2$ as the constant sequence $\{1^k\}$ and the alternating sequence $\{(-1)^k\}$, respectively.

For brevity, the following notations of sums related to the sequence $A$ are used in the sequel. Let $j\geq1$ be a positive integer. Then for $n\in\mathbb{N}_0$, define
\begin{align*}
&D\a(j)=\sum_{k=1}^\infty\frac{a_k}{k^j}\,,\quad D\a(1)=0\,,\\
&E\a_n(j)=\sum_{k=1}^n\frac{a_{n-k}}{k^j}\,,\quad E\a_0(j)=0\,,&&
    \bar{E}\a_n(j)=\sum_{k=1}^n\frac{a_{k-n-1}}{k^j}\,,\quad\bar{E}\a_0(j)=0\,,\\
&\hat{E}\a_n(j)=\sum_{k=1}^n\frac{a_{n-k}}{(k-1/2)^j}\,,\quad\hat{E}\a_0(j)=0\,,&&
    \tilde{E}\a_n(j)=\sum_{k=1}^n\frac{a_{k-n-1}}{(k-1/2)^j}\,,\quad\tilde{E}\a_0(j)=0\,,
\end{align*}
and for $n\in\mathbb{Z}$, define
\begin{align*}
&F\a_n(j)=\left\{
    \begin{array}{ll}
        \sum\limits_{k=1}^\infty\frac{a_{k+n}-a_k}{k}\,,&j=1\,,\\
        \sum\limits_{k=1}^\infty\frac{a_{k+n}}{k^j}\,,&j>1\,,
    \end{array}\right.&&
\bar{F}\a_n(j)=\left\{
    \begin{array}{ll}
        \sum\limits_{k=1}^\infty\frac{a_{k-n}-a_k}{k}\,,&j=1\,,\\
        \sum\limits_{k=1}^\infty\frac{a_{k-n}}{k^j}\,,&j>1\,,\\
    \end{array}\right.\\
&\hat{F}\a_n(j)=\left\{
    \begin{array}{ll}
        \sum\limits_{k=1}^\infty\smbb{\frac{a_{k+n}}{k-1/2}-\frac{a_k}{k}}\,,&j=1\,,\\
        \sum\limits_{k=1}^\infty\frac{a_{k+n}}{(k-1/2)^j}\,,&j>1\,,
    \end{array}\right.&&
\tilde{F}\a_n(j)=\left\{
    \begin{array}{ll}
        \sum\limits_{k=1}^\infty\smbb{\frac{a_{k-n}}{k-1/2}-\frac{a_k}{k}}\,,&j=1\,,\\
        \sum\limits_{k=1}^\infty\frac{a_{k-n}}{(k-1/2)^j}\,,&j>1\,.
    \end{array}\right.
\end{align*}
Now, let
\begin{align*}
&G\a_n(j)=E\a_n(j)-\bar{E}\a_{n-1}(j)-\tf{a_0}{n^j}\,,&& G\a_0(j)=0\,,\\
&L\a_n(j)=F\a_n(j)+(-1)^j\bar{F}\a_n(j)\,,&&\\
&M\a_n(j)=E\a_n(j)+(-1)^jF\a_n(j)\,,&&
    \bar{M}\a_n(j)=\bar{F}\a_n(j)-\bar{E}\a_{n-1}(j)\,,\\
&N\a_n(j)=\hat{E}\a_n(j)+(-1)^j\hat{F}\a_{n-1}(j)\,,&&
    \bar{N}\a_n(j)=\tilde{F}\a_n(j)-\tilde{E}\a_{n-1}(j)\,,\\
&R\a_n(j)=G\a_n(j)+(-1)^j L\a_n(j)\,,&&
    S\a_n(j)=N\a_n(j)+\bar{N}\a_n(j)-\tf{a_0}{(n-1/2)^j}\,,
\end{align*}
and let
\begin{align*}
&\hat{t}\a(j)=\left\{
    \begin{array}{ll}
        \sum\limits_{k=1}^\infty\smbb{\frac{a_{k-1}}{k-1/2}-\frac{a_k}{k}}\,,&j=1\,,\\
        \sum\limits_{k=1}^\infty\frac{a_{k-1}}{(k-1/2)^j}\,,&j>1\,,
    \end{array}\right.\quad
\t\a(j)=\left\{
    \begin{array}{ll}
        \sum\limits_{k=1}^\infty\smbb{\frac{a_{k}}{k-1/2}-\frac{a_k}{k}}\,,&j=1\,,\\
        \sum\limits_{k=1}^\infty\frac{a_{k}}{(k-1/2)^j}\,,&j>1\,,
    \end{array}\right.\\
&\check{t}\a(j)=(-1)^{j-1}\hat{t}\a(j)-\t\a(j)\,.
\end{align*}
Note that in the above definitions, setting $A=A_1$ or $A_2$ yields
\begin{align*}
&M^{(A_1)}_n(j)=H^{(j)}_n+(-1)^j\ze(j)\,,\quad
    \bar{M}^{(A_1)}_n(j)=\ze(j)-H^{(j)}_{n-1}\,,\\
&N^{(A_1)}_n(j)=h^{(j)}_n+(-1)^j\t(j)\,,\quad
    \bar{N}^{(A_1)}_n(j)=\t(j)-h^{(j)}_{n-1}\,,\\
&R^{(A_1)}_n(j)=(1+(-1)^j)\ze(j)\,,\quad
    S^{(A_1)}_n(j)=(1+(-1)^j)\t(j)\,,\\
&D^{(A_1)}(j)=\ze(j)\,,\quad
    \hat{t}^{(A_1)}(j)=\t^{(A_1)}(j)=\t(j)\,,\quad
    \check{t}^{(A_1)}(j)=-(1+(-1)^j)\t(j)\,,
\end{align*}
and
\begin{align*}
&M^{(A_2)}_n(j)=(-1)^{n-1}\bar{H}^{(j)}_n
    +(-1)^{n+j}\ze(\bar{j})-\de_{j,1}\ln(2)\,,\\
&\bar{M}^{(A_2)}_n(j)=(-1)^{n}\bar{H}^{(j)}_{n-1}
    +(-1)^n\ze(\bar{j})+\de_{j,1}\ln(2)\,,\\
&N^{(A_2)}_n(j)=(-1)^{n-1}\bar{h}^{(j)}_n
    +(-1)^{n+j-1}\t(\bar{j})-\de_{j,1}\ln(2)\,,\\
&\bar{N}^{(A_2)}_n(j)=(-1)^{n}\bar{h}^{(j)}_{n-1}
    +(-1)^n\t(\bar{j})+\de_{j,1}\ln(2)\,,\\
&R^{(A_2)}_n(j)=(-1)^n(1+(-1)^j)\ze(\bar{j})\,,\quad
    S^{(A_2)}_n(j)=(-1)^n(1-(-1)^j)\t(\bar{j})\,,\\
&D^{(A_2)}(j)=\ze(\bar{j})+\de_{j,1}\ln(2)\,,\quad
    \hat{t}^{(A_2)}(j)=-\t(\bar{j})+\de_{j,1}\ln(2)\,,\\
&\t^{(A_2)}(j)=\t(\bar{j})+\de_{j,1}\ln(2)\,,\quad
    \check{t}^{(A_2)}(j)=-(1-(-1)^j)\t(\bar{j})\,.
\end{align*}
where $\de_{n,k}$ is the Kronecker delta. Note also that we should interpret $\ze(1):=0$ and $\t(1):=2\ln(2)$ wherever they occur.

Some sums presented here can also be found in a previous paper \cite{X2020.EFG} of the second named author. Moreover, in \cite{X2020.EFG}, the following two definitions were introduced.

\begin{definition}\label{Def.diga}
The \emph{parametric digamma function} $\vPsi(-s;A)$ related to the sequence $A$ is defined by
\[
\vPsi(-s;A)=\frac{a_0}{s}+\sum_{k=1}^\infty\smbb{\frac{a_k}{k}-\frac{a_k}{k-s}}\,,\quad
\text{for } s\in\mathbb{C}\setminus\mathbb{N}_0\,.
\]
\end{definition}

\begin{definition}\label{Def.cot}
The \emph{parametric cotangent function} $\cot(\pi s;A)$ related to the sequence $A$ is defined by
\[
\pi\cot(\pi s;A)
    =-\frac{a_0}{s}+\vPsi(-s;A)-\vPsi(s;A)
    =\frac{a_0}{s}-2s\sum_{k=1}^\infty\frac{a_k}{k^2-s^2}\,.
\]
\end{definition}

Obviously, when $A=A_1$, the parametric digamma function $\vPsi(-s;A)$ reduces to the classical digamma function $\psi(-s)+\ga$, where $\ga$ is the Euler-Mascheroni constant. By setting $A=A_1$ or $A_2$, respectively, the parametric cotangent function $\cot(\pi s;A)$ turns into
\[
\cot(\pi s;A_1)=\cot(\pi s)\quad\text{and}\quad
\cot(\pi s;A_2)=\csc(\pi s)\,.
\]

Using these notations, we give some series expansions, which will be frequently used in the computation in the sequel. For the parametric digamma function $\vPsi(-s;A)$, we have

\begin{lemma}[\citu{X2020.EFG}{Theorem 2.2}]\label{blem1}
For positive integers $p$ and $n$, if $|s+n|<1$, then
\[
\frac{\vPsi^{(p-1)}(-s;A)}{(p-1)!}
    =(-1)^p\sum_{j=1}^\infty\binom{j+p-2}{p-1}\bar{M}\a_n(j+p-1)(s+n)^{j-1}\,.
\]
\end{lemma}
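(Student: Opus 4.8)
The plan is to read $\vPsi^{(p-1)}(-s;A)$ as the $(p-1)$-th derivative of $\vPsi(\,\cdot\,;A)$ with respect to its argument, evaluated at $-s$; accordingly I set $u=-s$ and differentiate in $u$. Rewriting Definition \ref{Def.diga} in the variable $u$ gives
\[
\vPsi(u;A)=-\frac{a_0}{u}+\sum_{k=1}^\infty\smbb{\frac{a_k}{k}-\frac{a_k}{k+u}}\,,
\]
which is analytic away from the poles $u\in-\mathbb{N}_0$. Since $n\geq1$ and $|s+n|<1$ force $u=-s$ into the disk $|u-n|<1$, the function is analytic there, and the asserted identity is nothing but its power-series expansion about $u=n$ in the variable $s+n=n-u$. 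Because both sides will be analytic on this disk, it suffices to match power-series coefficients.

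For $p\geq2$ I would differentiate the series termwise $p-1$ times. The constant terms $a_k/k$ die, and using $\frac{\ud^{p-1}}{\ud u^{p-1}}(k+u)^{-1}=(-1)^{p-1}(p-1)!\,(k+u)^{-p}$ (and the analogue for the $a_0/u$ term) I obtain the closed form
\[
\frac{\vPsi^{(p-1)}(u;A)}{(p-1)!}=(-1)^p\smbb{\frac{a_0}{u^p}+\sum_{k=1}^\infty\frac{a_k}{(k+u)^p}}\,.
\]
Substituting $u=-s=n-(s+n)$ and reindexing $m=k+n$, the $a_0$ term becomes exactly the $m=n$ term of the shifted sum, so the bracket collapses to $\sum_{m=n}^\infty a_{m-n}/(m-(s+n))^p$. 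Expanding each denominator by the generalized binomial theorem, $(m-(s+n))^{-p}=\sum_{j\geq0}\binom{p+j-1}{j}(s+n)^j m^{-p-j}$, and interchanging the two summations (legitimate by absolute convergence for $|s+n|<1$ under the growth hypothesis on $A$), the inner sum over $m$ is precisely $\sum_{m\geq n}a_{m-n}/m^{p+j}=\bar{M}\a_n(p+j)$ by the definitions of $\bar{F}\a_n$ and $\bar{E}\a_n$. Reindexing $j\mapsto j-1$ and using $\binom{p+j-2}{j-1}=\binom{j+p-2}{p-1}$ then yields the claimed formula with the prefactor $(-1)^p$.

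The residual case $p=1$ is where the real work lies, and I expect it to be the main obstacle: no differentiation is available, and the bare series $\sum_k a_k/(k+u)$ and $\sum_k a_k/k$ diverge individually, only their regularized difference converging. The fix is to keep the paired terms $a_k/k-a_k/(k+u)$ together throughout, carry out the reindexing $m=k+n$ inside a partial sum truncated at $N$, and then let $N\to\infty$; the $n$ tail terms $\sum_{N-n<k\leq N}a_k/k$ vanish precisely because $a_k/k\to0$. Matching the resulting regularized constant term against $\bar{M}\a_n(1)=\bar{F}\a_n(1)-\bar{E}\a_{n-1}(1)$, with the special $j=1$ form $\bar{F}\a_n(1)=\sum_k(a_{k-n}-a_k)/k$, reproduces the $j=1$ coefficient $-\bar{M}\a_n(1)$; the higher coefficients ($j\geq2$) already fall under the absolutely convergent $j>1$ regime and agree with the $p\geq2$ computation. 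Throughout I would lean on the analyticity of $\vPsi(\,\cdot\,;A)$ on $|u-n|<1$ to guarantee that the termwise-differentiated and rearranged series represent the same analytic function, so that equality of the power-series coefficients is all that needs to be checked.
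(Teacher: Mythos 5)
Your proposal is correct and is essentially the intended argument: the paper itself gives no proof of this lemma (it is quoted from \citu{X2020.EFG}{Theorem 2.2}), and its companion expansions (Lemmas \ref{Lem.Psi.nZ} and \ref{Lem.Psi.hn}) are dispatched with ``follows directly from the definition,'' which is exactly your computation --- closed form for the derivatives of Definition \ref{Def.diga}, the shift $m=k+n$ absorbing the $a_0$ term as the $m=n$ summand, binomial expansion of $(m-(s+n))^{-p}$, and the regularized pairing with vanishing $n$-term tails for $p=1$. You also correctly resolved the one genuinely delicate point of interpretation, namely that $\vPsi^{(p-1)}(-s;A)$ denotes the derivative with respect to the argument evaluated at $-s$ (not $\ud^{p-1}/\ud s^{p-1}$ of the composite), without which the prefactor $(-1)^p$ would come out wrong.
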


Moreover, the next two lemmas hold.

\begin{lemma}\label{Lem.Psi.nZ}
For integers $p\geq1$ and $n\geq 0$, if $|s-n|<1$, then
\begin{equation}\label{2.1}
\frac{\vPsi^{(p-1)}(\tf{1}{2}-s;A)}{(p-1)!}
    =\sum_{j=1}^\infty(-1)^{j-1}\binom{j+p-2}{p-1}N\a_n(j+p-1)(s-n)^{j-1}\,,
\end{equation}
and if $|s+n|<1$, then
\begin{equation}\label{2.2}
\frac{\vPsi^{(p-1)}(\tf{1}{2}-s;A)}{(p-1)!}
    =(-1)^p\sum_{j=1}^\infty\binom{j+p-2}{p-1}\bar{N}\a_{n+1}(j+p-1)(s+n)^{j-1}\,.
\end{equation}
\end{lemma}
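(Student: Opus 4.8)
The plan is to obtain both expansions by differentiating the defining series of $\vPsi(\,\cdot\,;A)$ termwise and matching coefficients, in the same spirit as Lemma~\ref{blem1}. The starting point is the \emph{polygamma form} of the parametric digamma function: differentiating the series in Definition~\ref{Def.diga} $p-1$ times with respect to its argument gives, for every integer $p\geq 2$,
\[
\frac{\vPsi^{(p-1)}(z;A)}{(p-1)!}=(-1)^p\sum_{k=0}^\infty\frac{a_k}{(k+z)^p}\,,
\]
the constant terms $a_k/k$ disappearing under differentiation; the case $p=1$ must instead be read off from the original conditionally convergent series, whose subtractions $-a_k/k$ will be essential below. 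Substituting $z=\tf{1}{2}-s$ replaces the denominators by $(k+\tf{1}{2}-s)^p$, so that the singularities in $s$ sit at the half-integers and the right-hand side is analytic in a neighbourhood of every integer.

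For \eqref{2.1} I would put $s=n+(s-n)$ and split the sum $\sum_{k\geq 0}$ at $k=n$, since the sign of the centre $k+\tf{1}{2}-n$ changes there. For $k\geq n$ set $m=k-n\geq 0$, giving the denominator $\smbb{m+\tf{1}{2}-(s-n)}^p$; for $0\leq k\leq n-1$ set $m=n-k\in\{1,\dots,n\}$, giving $(-1)^p\smbb{m-\tf{1}{2}+(s-n)}^p$. Expanding each factor by the binomial series $\smbb{x\mp\vep}^{-p}=\sum_{j\geq 0}\binom{p+j-1}{j}x^{-p-j}(\pm\vep)^j$, interchanging the two summations, and re-indexing $j\mapsto j-1$ (which produces the weight $\binom{j+p-2}{p-1}$), the infinite block collapses to $\hat{F}\a_{n-1}(j+p-1)$ and the finite block to $\hat{E}\a_n(j+p-1)$. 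Since $(-1)^p=(-1)^{j-1}(-1)^{j+p-1}$, I can factor out $(-1)^{j-1}$ and recombine the two pieces into $N\a_n(j+p-1)=\hat{E}\a_n(j+p-1)+(-1)^{j+p-1}\hat{F}\a_{n-1}(j+p-1)$, which is precisely \eqref{2.1}.

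The expansion \eqref{2.2} is cleaner. Writing $s=-n+(s+n)$ makes every denominator $\smbb{k+n+\tf{1}{2}-(s+n)}^p$ with strictly positive centre $k+n+\tf{1}{2}$, so no splitting is needed. A single binomial expansion and the re-indexing $j\mapsto j-1$ leave the coefficient $(-1)^p\sum_{k\geq 0}a_k\big/(k+n+\tf{1}{2})^{\,j+p-1}$; shifting the summation index by $n+1$ turns this into $\tilde{F}\a_{n+1}(j+p-1)-\tilde{E}\a_n(j+p-1)=\bar{N}\a_{n+1}(j+p-1)$, yielding \eqref{2.2}.

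The main work is the justification rather than the algebra. The termwise binomial expansions and the interchange of the double sum are legitimate only for $s$ in a neighbourhood of $n$ small enough that each geometric factor converges, so that step has to be argued from the absolute convergence of the sums $\hat{E},\hat{F},\tilde{E},\tilde{F}$ together with the decay $a_k/k\to 0$; the resulting power series then coincides with the Taylor expansion of the left-hand side throughout its disc of convergence, which is all that the later residue computations require. The genuinely delicate point is the weight-one case $p=1$: here the polygamma form diverges and one must expand the original series directly, and it is exactly the regularizing subtractions $-a_k/k$ that render the $j=1$ instances $\hat{F}\a_{n-1}(1)$ and $\tilde{F}\a_{n+1}(1)$ — hence the constant coefficients of \eqref{2.1} and \eqref{2.2} — well defined and matching their definitions. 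An alternative for this case is to establish $p\geq 2$ as above and then recover $p=1$ by antidifferentiating in $s$, determining the constant of integration from the value at a convenient point.
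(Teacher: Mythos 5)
Your proposal is correct and takes essentially the same route as the paper: the paper's entire proof is the one-line remark that the lemma ``follows directly from the definition of the function $\vPsi(-s;A)$'', and your argument---termwise differentiation of the defining series, splitting the sum at $k=n$, binomial expansion, and reassembly of the coefficients into $N^{(A)}_n(j+p-1)$ and $\bar{N}^{(A)}_{n+1}(j+p-1)$---is precisely that direct verification carried out in detail. Your separate handling of $p=1$, where the regularizing subtractions $-a_k/k$ are what make the $j=1$ coefficients $\hat{F}^{(A)}_{n-1}(1)$ and $\tilde{F}^{(A)}_{n+1}(1)$ meaningful, correctly fills in the only genuinely delicate point that the paper leaves implicit.
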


\begin{proof}
This lemma follows directly from the definition of the function $\vPsi(-s;A)$.
\end{proof}

\begin{lemma}\label{Lem.Psi.hn}
For positive integers $p$ and $n$, if $0<|s-n+1/2|<1$, then
\begin{align*}
&\frac{\vPsi^{(p-1)}(\tf{1}{2}-s;A)}{(p-1)!}\\
&\quad=\frac{1}{(s-n+\tf{1}{2})^p}\smbb{
    a_{n-1}-\sum_{j=1}^\infty(-1)^j\binom{j+p-2}{p-1}M\a_{n-1}(j+p-1)(s-n+\tf{1}{2})^{j+p-1}}\,.
\end{align*}
\end{lemma}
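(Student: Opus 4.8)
The plan is to prove Lemma~\ref{Lem.Psi.hn} by expanding the parametric digamma function $\vPsi(\tf{1}{2}-s;A)$ around the point $s=n-\tf{1}{2}$, where the argument $\tf{1}{2}-s$ equals $1-n$, a nonpositive integer at which $\vPsi$ has a simple pole coming from the $k=n$ term. First I would write out the definition of $\vPsi(-u;A)$ with $u=s-\tf{1}{2}$, namely
\[
\vPsi(\tf{1}{2}-s;A)=\frac{a_0}{s-1/2}+\sum_{k=1}^\infty\smbb{\frac{a_k}{k}-\frac{a_k}{k-s+1/2}}\,,
\]
and isolate the single term $k=n$, whose denominator $k-s+\tf{1}{2}=n-s+\tf{1}{2}=-(s-n+\tf{1}{2})$ vanishes as $s\to n-\tf{1}{2}$. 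This term contributes the principal singular part $a_n/(s-n+\tf{1}{2})$ (up to sign), which after the $(p-1)$-th derivative produces the leading $1/(s-n+\tf{1}{2})^p$ behavior matching the $a_{n-1}$ term on the right-hand side (after the index bookkeeping, since the pole sits at the $k=n$ summand but the stated formula carries $a_{n-1}$, so I would track the index shift carefully).

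Next I would treat the regular part. After removing the singular $k=n$ term, the remaining sum is analytic near $s=n-\tf{1}{2}$, so I expand each surviving summand $a_k/(k-s+\tf{1}{2})$ as a geometric-type power series in the local variable $(s-n+\tf{1}{2})$. Concretely, writing $k-s+\tf{1}{2}=(k-n)-(s-n+\tf{1}{2})$ and factoring, one gets for $k\neq n$
\[
\frac{a_k}{k-s+1/2}
    =\sum_{m\geq0}\frac{a_k}{(k-n)^{m+1}}(s-n+\tf{1}{2})^m\,,
\]
and then the $(p-1)$-th derivative, division by $(p-1)!$, and interchange of summation orders collects the coefficient of $(s-n+\tf{1}{2})^{j+p-1}$ in terms of sums of the form $\sum_k a_k/(k-n)^{j+p-1}$ over $k\neq n$. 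The essential step is recognizing this collected coefficient as precisely $M\a_{n-1}(j+p-1)$: by the definition $M\a_n(j)=E\a_n(j)+(-1)^jF\a_n(j)$, the two pieces correspond to $k<n$ (giving the finite $E$-sum after reindexing $k\mapsto n-k$) and $k>n$ (giving the infinite $F$-sum after reindexing $k\mapsto k+n$), with the alternating sign $(-1)^j$ emerging from the factor $(k-n)^{-(j+p-1)}$ when $k<n$.

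The main obstacle I expect is the index and sign bookkeeping in this last identification, not any analytic subtlety. In particular, I must verify that splitting $\sum_{k\neq n}$ into $k<n$ and $k>n$, reindexing each, and matching against the definitions of $E\a$, $F\a$ (including the special $j=1$ convention in $F\a_n$ and $\hat{F}\a_n$) reproduces exactly $M\a_{n-1}(j+p-1)$ with the correct argument $n-1$ rather than $n$; the apparent off-by-one between the pole location $k=n$ and the subscript $n-1$ in the statement is the delicate point and reflects how the $k=n$ term is absorbed into the principal part. The convergence of the tail and the legitimacy of the term-by-term differentiation and reordering follow from the hypothesis $\lim_{k\to\infty}a_k/k=0$ and the restriction $0<|s-n+\tf{1}{2}|<1$, which guarantees the local expansions converge and stay away from the next pole; these I would state briefly but not belabor, consistent with the proof of Lemma~\ref{Lem.Psi.nZ}, which follows similarly and directly from the definition.
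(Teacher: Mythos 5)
Your overall strategy---expand $\vPsi(\tfrac{1}{2}-s;A)$ directly from Definition \ref{Def.diga} about $s=n-\tfrac{1}{2}$, isolate the singular summand, expand the remaining summands geometrically in $s-n+\tfrac{1}{2}$, and recognize the resulting coefficients as $E^{(A)}_{\cdot}+(-1)^{j}F^{(A)}_{\cdot}=M^{(A)}_{\cdot}$---is exactly what the paper means by ``obtained immediately from Definition \ref{Def.diga}'', so the route is the intended one. However, your execution contains an off-by-one error that is not a deferrable bookkeeping detail but a misidentification that would make the computation fail. Writing $\vPsi(\tfrac{1}{2}-s;A)=\frac{a_0}{s-1/2}+\sum_{k\ge1}\bigl(\frac{a_k}{k}-\frac{a_k}{k-s+1/2}\bigr)$, the denominator $k-s+\tfrac{1}{2}$ vanishes at $s=n-\tfrac{1}{2}$ for $k=n-1$, not $k=n$: indeed $(n-1)-s+\tfrac{1}{2}=-(s-n+\tfrac{1}{2})$, whereas $n-s+\tfrac{1}{2}=-(s-n-\tfrac{1}{2})$ tends to $1$ as $s\to n-\tfrac{1}{2}$, so your claimed identity $n-s+\tfrac{1}{2}=-(s-n+\tfrac{1}{2})$ is false. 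Consequently the principal part is exactly $a_{n-1}/(s-n+\tfrac{1}{2})$---with no sign ambiguity and no ``absorption'' of a neighboring term---and there is no index shift to track at all: splitting the regular summands around $k=n-1$, the indices $1\le k\le n-2$ together with the term $a_0/(s-1/2)$ produce $(-1)^{j-1}E^{(A)}_{n-1}(j+p-1)$, while the indices $k\ge n$ produce $-F^{(A)}_{n-1}(j+p-1)$, so the subscript $n-1$ in $M^{(A)}_{n-1}$ appears automatically. If you instead carry out the split centered at $k=n$ as written, you obtain $a_n$ in the principal part and $M^{(A)}_{n}$-type coefficients, which do not match the statement; your hope that ``the $k=n$ term is absorbed into the principal part'' cannot be realized, since that term is regular at $s=n-\tfrac{1}{2}$ and in fact contributes to the $F$-part of $M^{(A)}_{n-1}$.

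The same slip infects your local expansion: $k-s+\tfrac{1}{2}=(k-n+1)-(s-n+\tfrac{1}{2})$, not $(k-n)-(s-n+\tfrac{1}{2})$, so the geometric series runs in powers of $(s-n+\tfrac{1}{2})/(k-n+1)$, again forcing the centering at $n-1$. A second point you gloss over with ``up to sign'': in this paper $\vPsi^{(p-1)}(\tfrac{1}{2}-s;A)$ denotes the $(p-1)$st derivative of $\vPsi(\,\cdot\,;A)$ evaluated at the point $\tfrac{1}{2}-s$, not $\frac{\ud^{p-1}}{\ud s^{p-1}}$ of the composite function; the chain-rule factor $(-1)^{p-1}$ then cancels the factor $(-1)^{p-1}$ coming from differentiating $(s-n+\tfrac{1}{2})^{-1}$, which is precisely why the leading coefficient in the Lemma is $+a_{n-1}$ and why the series carries the sign $-(-1)^{j}$ in front of $M^{(A)}_{n-1}(j+p-1)$ (consistent, at $n=1$, with Eq. (\ref{Psi.s.0.5})). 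Under your reading of the derivative, the statement would be off by $(-1)^{p-1}$ for even $p$. With the centering corrected to $k=n-1$ and the derivative convention made explicit, your argument goes through and coincides with the paper's one-line derivation from Definition \ref{Def.diga}.
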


\begin{proof}
This lemma can be obtained immediately from Definition \ref{Def.diga} or \cite[Theorem 2.1]{X2020.EFG}.
\end{proof}

Setting $n=0$ in (\ref{2.1}) and (\ref{2.2}) gives
\begin{equation}\label{Psi.s.0}
\frac{\vPsi^{(p-1)}(\tf{1}{2}-s;A)}{(p-1)!}
    =(-1)^p\sum_{j=1}^\infty\binom{j+p-2}{p-1}\hat{t}\a(j+p-1)s^{j-1}\,,
    \quad\text{for }|s|<1\,,
\end{equation}
and setting $n=1$ in Lemma \ref{Lem.Psi.hn} yields
\begin{equation}\label{Psi.s.0.5}
\frac{\vPsi^{(p-1)}(\tf{1}{2}-s;A)}{(p-1)!}
    =\frac{a_{0}}{(s-\tf{1}{2})^p}
    +(-1)^p\sum_{j=1}^\infty\binom{j+p-2}{p-1}D\a(j+p-1)(s-\tf{1}{2})^{j-1}\,.
\end{equation}

For the parametric cotangent function $\cot(\pi s;A)$, Definition \ref{Def.cot} gives
\begin{equation}\label{cot.s.0}
\pi\cot(\pi s;A)=\frac{a_0}{s}-2\sum_{j=1}^\infty D\a(2j)s^{2j-1}\,,\quad\text{for }|s|<1\,.
\end{equation}
Since $R_0\a(j)=(1+(-1)^j)D\a(j)$, it can be found that the above expansion is the $n=0$ case of the following one presented in \cite{X2020.EFG}:

\begin{lemma}[\citu{X2020.EFG}{Theorem 2.3}]\label{Lem.cot.nZ}
For integer $n$, if $0<|s-n|<1$, then
\[
\pi\cot(\pi s;A)=\frac{a_{|n|}}{s-n}-\sum_{j=1}^\infty(-\si_n)^j R\a_{|n|}(j)(s-n)^{j-1}\,,
\]
where $\si_n$ is defined by
\[
\si_n= \left\{
    \begin{array}{cl}
        1\,,&n\geq 0\,,\\
        -1\,,&n<0\,.
    \end{array}\right.
\]
\end{lemma}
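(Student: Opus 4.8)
The plan is to prove the expansion first for $n\geq0$ by a direct Taylor expansion of the defining series, and then to reduce the case $n<0$ to it using the oddness of the parametric cotangent. The instance $n=0$ is already recorded in (\ref{cot.s.0}), so the substantial work is the case $n\geq1$, where $\si_n=1$ and the claimed identity reads
\[
\pi\cot(\pi s;A)=\frac{a_n}{s-n}-\sum_{j=1}^\infty(-1)^jR\a_n(j)(s-n)^{j-1}.
\]
First I would rewrite the series in Definition \ref{Def.cot} via the partial fraction $\frac{2s}{k^2-s^2}=\frac{1}{k-s}-\frac{1}{k+s}$, which gives $\pi\cot(\pi s;A)=\frac{a_0}{s}-\sum_{k\geq1}\frac{a_k}{k-s}+\sum_{k\geq1}\frac{a_k}{k+s}$. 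Writing $s=n+u$ with $u=s-n$ and $|u|<1$, the only singular contribution is the $k=n$ term of the middle sum, which produces the pole $\frac{a_n}{s-n}=\frac{a_{|n|}}{s-n}$ and fixes the principal part.

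Next I would split the regular part into the term $a_0/s$, the range $k>n$ and the range $1\leq k<n$ of the middle sum, and the tail sum, expand each in a geometric series in $u$, and reindex by $m=k-n$, $m=n-k$, and $m=k+n$ respectively, so that every denominator becomes a pure power $m^{j+1}$. This converts the four groups into the building blocks $F\a_n$, $E\a_n$, $\bar{E}\a_{n-1}$, and $\bar{F}\a_n$, together with stray multiples of $a_0/n^{j+1}$.

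Collecting the coefficient of $u^j$ for $j\geq1$, the pieces assemble into
$(-1)^j\bigl(E\a_n(j+1)-\bar{E}\a_{n-1}(j+1)-a_0/n^{j+1}\bigr)-F\a_n(j+1)+(-1)^j\bar{F}\a_n(j+1)$, which is exactly $(-1)^jG\a_n(j+1)-L\a_n(j+1)$; multiplying through by $(-1)^j$ and comparing with $R\a_n(j+1)=G\a_n(j+1)-(-1)^jL\a_n(j+1)$ shows the coefficient equals $(-1)^jR\a_n(j+1)$, which after re-indexing is precisely the asserted sum. The delicate step, and the one I expect to be the main obstacle, is the constant term $j=0$: here the naive coefficients contain the individually divergent sums $\sum_m a_{m+n}/m$ and $\sum_m a_{m-n}/m$. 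The remedy is to subtract and add $\sum_m a_m/m$ so as to recover the convergent quantities $F\a_n(1)$ and $\bar{F}\a_n(1)$ from their definitions; the two copies of $\sum_m a_m/m$ then cancel, and the surviving terms collapse to $G\a_n(1)-L\a_n(1)=R\a_n(1)$, in agreement with the value $(-1)^0R\a_n(1)$ predicted by the formula.

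Finally, for $n<0$ I would invoke the oddness $\pi\cot(-\pi s;A)=-\pi\cot(\pi s;A)$, which is immediate from Definition \ref{Def.cot}. Writing $n=-m$ with $m=|n|\geq1$ and $v=s-n$, one has $\pi\cot(\pi s;A)=-\pi\cot(\pi(m-v);A)$; substituting the already proven expansion about the positive integer $m$ and simplifying the factors $(-v)^{j-1}$ turns $-\sum_j(-1)^jR\a_m(j)(-v)^{j-1}$ into $-\sum_jR\a_m(j)v^{j-1}$, which is exactly the claimed formula with $(-\si_n)^j=1$ for $n<0$. This completes the reduction and the proof.
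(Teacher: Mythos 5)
Your proof is correct, but it cannot be compared step-for-step with ``the paper's own proof'' because the paper gives none: Lemma \ref{Lem.cot.nZ} is imported verbatim from \cite[Theorem 2.3]{X2020.EFG}, so your argument supplies the self-contained verification that the paper delegates to a citation. Your route is the natural direct one, and it checks out: the partial fraction $\frac{2s}{k^2-s^2}=\frac{1}{k-s}-\frac{1}{k+s}$, the identification of the $k=n$ term as the sole pole in $0<|s-n|<1$, the geometric expansions reindexed into $E\a_n$, $\bar{E}\a_{n-1}$, $F\a_n$, $\bar{F}\a_n$, and the assembly of the coefficient of $(s-n)^j$ into $(-1)^jG\a_n(j+1)-L\a_n(j+1)=(-1)^jR\a_n(j+1)$ all agree with the definitions in Section \ref{Sec.Notation.Exp}; your handling of the constant term, where the two individually divergent tails must be recombined so that only $F\a_n(1)$ and $\bar{F}\a_n(1)$ survive, is exactly the delicate point and is sound when read at the level of partial sums. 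The reduction of $n<0$ to $n\geq1$ via oddness of $\pi\cot(\pi s;A)$ is also valid, though your sign bookkeeping there is compressed: literally, $-\sum_{j\geq1}(-1)^jR\a_m(j)(-v)^{j-1}=+\sum_{j\geq1}R\a_m(j)v^{j-1}$, and it is only after multiplying by the outer $-1$ coming from oddness that one obtains $-\sum_{j\geq1}R\a_m(j)v^{j-1}$; the identity you land on is nevertheless the correct one. Two remarks for polish. First, the interchanges of summation and the ``add and subtract $\sum_m a_m/m$'' device are formal for a general sequence with only $a_k/k\to0$ (those auxiliary series can diverge), but this is precisely the tacit level of rigor of the paper's own definitions, and for the sequences $A_1,A_2$ actually used everything converges. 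Second, an alternative derivation closer to the paper's style would write $\pi\cot(\pi s;A)=-a_0/s+\vPsi(-s;A)-\vPsi(s;A)$ (Definition \ref{Def.cot}) and insert the two parametric digamma expansions about integers (Lemma \ref{blem1} on the regular side and the integer analogue of Lemma \ref{Lem.Psi.hn}, i.e.\ \cite[Theorem 2.1]{X2020.EFG}, on the pole side), mirroring exactly how the paper deduces Lemma \ref{Lem.cot.hn} from Definition \ref{Def.cot} and Lemma \ref{Lem.Psi.nZ}; that is presumably the proof in the cited source, and it buys economy by reusing the digamma expansions, whereas your computation buys independence from them.
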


Additionally, we have the next result.

\begin{lemma}\label{Lem.cot.hn}
For integers $m\geq0$ and $n\geq1$, if $|s-n+1/2|<1$, then
\[
\frac{\ud^m}{\ud s^m}(\pi\cot(\pi s;A))
    =(-1)^mm!\sum_{j=1}^\infty(-1)^{j-1}\binom{j+m-1}{m}S\a_n(j+m)(s-n+\tf{1}{2})^{j-1}\,.
\]
\end{lemma}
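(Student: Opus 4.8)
The plan is to prove Lemma \ref{Lem.cot.hn} by reducing it to the already-established expansions of the parametric cotangent function, specifically Lemma \ref{Lem.cot.nZ} together with the definitions of the component sums. First I would fix integers $m\geq 0$ and $n\geq 1$ and set $u=s-n+\tf{1}{2}$, so that $s=n-\tf{1}{2}+u$ lies near the half-integer $n-\tf12$. The function $\pi\cot(\pi s;A)=\tf{a_0}{s}-2s\sum_{k\geq1}\tf{a_k}{k^2-s^2}$ is analytic in a punctured neighborhood of each half-integer (the poles sit at the integers), so a genuine Taylor expansion in $u$ exists for $|u|<1$, and the claim is precisely the identification of its $m$-th derivative with the stated series in $S\a_n$.

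My approach is to build the half-integer expansion directly from Definition \ref{Def.cot}, which writes $\pi\cot(\pi s;A)=-\tf{a_0}{s}+\vPsi(-s;A)-\vPsi(s;A)$. The key observation is that both digamma pieces, evaluated at arguments near $n-\tf12$, are governed by the expansions I already have in hand. Concretely, I would split $\vPsi(-s;A)-\vPsi(s;A)$ and re-express each summand so that one of the two $\vPsi$-factors matches the expansion \eqref{2.1} of $\vPsi^{(p-1)}(\tf12-s;A)$ at the relevant integer and the other matches \eqref{2.2}; differentiating $m$ times and collecting coefficients should produce $N\a_n(j+m)$ from the first and $\bar N\a_n(j+m)$ from the second. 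The definition $S\a_n(j)=N\a_n(j)+\bar N\a_n(j)-\tf{a_0}{(n-1/2)^j}$ is built exactly to absorb these two contributions together with the residual $-\tf{a_0}{s}$ term at $s=n-\tf12$, so the subtracted $\tf{a_0}{(n-1/2)^{j+m}}$ is precisely the Taylor coefficient of that elementary pole term. The binomial factor $\binom{j+m-1}{m}$ and the sign $(-1)^m m!(-1)^{j-1}$ emerge from differentiating $u^{j+m-1}$ and from the alternating-sign bookkeeping already visible in \eqref{2.1}.

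In carrying this out I would differentiate term by term under the summation sign, which is justified by uniform convergence on compact subsets of $|u|<1$ away from the pole, a point I can invoke from the analyticity of $\cot(\pi s;A)$ asserted after Definition \ref{Def.cot}. The main obstacle I anticipate is the careful matching of indices and signs when aligning the two digamma expansions: the function $\vPsi(-s;A)$ naturally expands around $-n$ while $\vPsi(s;A)$ expands around $n$, and translating both into a single variable $u$ centered at the half-integer $n-\tf12$ forces a shift that must be tracked so that the sum $N\a_n+\bar N\a_n$ appears with matching weight $j+m$ rather than mismatched orders. Once the indices are reconciled and the $\de_{j,1}$-type boundary terms (present when $j=1$, $m=0$) are verified against the definitions of $\hat F\a$ and $\tilde F\a$ embedded in $N\a_n$ and $\bar N\a_n$, the identity follows by reading off coefficients; the computation is then routine, so I would present the index alignment explicitly and leave the remaining coefficient comparison to the reader.
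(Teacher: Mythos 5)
Your proposal is correct and follows essentially the same route as the paper, whose entire proof is the one-line remark that the lemma follows immediately from Definition \ref{Def.cot} and Lemma \ref{Lem.Psi.nZ}: writing $\pi\cot(\pi s;A)=-a_0/s+\vPsi(-s;A)-\vPsi(s;A)$, expanding $\vPsi(-s;A)$ via (\ref{2.1}) (in the variable $s+\tf{1}{2}$ near the integer $n$, producing $N\a_n$) and $\vPsi(s;A)$ via (\ref{2.2}) (in the variable $\tf{1}{2}-s$ near $-(n-1)$, producing $\bar{N}\a_n$), with the geometric expansion of $-a_0/s$ supplying exactly the subtracted term $-a_0/(n-1/2)^{j}$ in the definition of $S\a_n(j)$ — precisely the decomposition and index alignment you describe. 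The only cosmetic slip is that $\pi\cot(\pi s;A)$ is analytic in a full (not punctured) neighborhood of $n-\tf{1}{2}$, since its poles lie at the integers, so the expansion is a genuine Taylor series and term-by-term differentiation within its disc of convergence needs no extra justification.
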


\begin{proof}
It follows immediately from Definition \ref{Def.cot} and Lemma \ref{Lem.Psi.nZ}.
\end{proof}

In particular, when $n=1$, from Lemma \ref{Lem.cot.hn} we obtain
\begin{equation}\label{cot.dm.lim}
\lim_{s\to1/2}\frac{\ud^m}{\ud s^m}(\pi\cot(\pi s;A))
    =m!\{(-1)^m\hat{t}\a(m+1)-\t\a(m+1)\}=m!\check{t}\a(m+1)\,.
\end{equation}

Finally, we present the residue theorem given in Flajolet and Salvy's paper \cite{FlSa98}.

\begin{lemma}[\citu{FlSa98}{Lemma 2.1}]\label{Lem.Res}
Let $\xi(s)$ be a kernel function and let $r(s)$ be a rational function which is $O(s^{-2})$ at infinity. Then
\begin{equation}\label{Cau.Lind}
\sum_{\al\in O}{\rm Res}(r(s)\xi(s),\al)
    +\sum_{\be\in S}{\rm Res}(r(s)\xi(s),\be)=0\,,
\end{equation}
where $S$ is the set of poles of $r(s)$ and $O$ is the set of poles of $\xi(s)$ that are not poles of $r(s)$. Here ${\rm Res}(h(s),\la)$ denotes the residue of $h(s)$ at $s=\la$, and the kernel function $\xi(s)$ is meromorphic in the whole complex plane and satisfies $\xi(s)=o(s)$ over an infinite collection of circles $|s|=\rho_k$ with $\rho_k\to+\infty$.
\end{lemma}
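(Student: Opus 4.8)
The plan is to realize both residue sums as the limit of a single contour integral of $r(s)\xi(s)$ taken over the circles $|s|=\rho_k$, and then to show that this integral tends to $0$. First I would fix $k$ and consider the closed contour $C_k=\{|s|=\rho_k\}$. Since $\xi$ is meromorphic on the whole plane and $r$ is rational, the product $r(s)\xi(s)$ is meromorphic, and its poles are precisely the points of $S$ (the poles of $r$) together with the points of $O$ (the poles of $\xi$ that are not poles of $r$); at a point that is simultaneously a pole of $r$ and of $\xi$, the two contributions merge into a single residue of the product, which is exactly why $O$ is defined to exclude $S$. By the residue theorem,
\[
\frac{1}{2\pi\ui}\oint_{C_k}r(s)\xi(s)\uud s
    =\sum_{\la\in(S\cup O)\cap\{|s|<\rho_k\}}{\rm Res}(r(s)\xi(s),\la)\,,
\]
provided no pole lies on $C_k$ itself; this non-degeneracy of the radii $\rho_k$ is part of what is built into the definition of a kernel function.

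Next I would estimate the left-hand side. By hypothesis $r(s)=O(s^{-2})$ at infinity, so $|r(s)|\leq B\rho_k^{-2}$ on $C_k$ for some constant $B$ and all large $k$; and the assumption $\xi(s)=o(s)$ over the circles means $\sup_{|s|=\rho_k}|\xi(s)|=o(\rho_k)$. Hence $\sup_{C_k}|r(s)\xi(s)|=o(\rho_k^{-1})$. Since $C_k$ has length $2\pi\rho_k$, the standard $ML$-estimate gives
\[
\left|\oint_{C_k}r(s)\xi(s)\uud s\right|
    \leq 2\pi\rho_k\cdot o(\rho_k^{-1})=o(1)\,,
\]
so the contour integral tends to $0$ as $k\to\infty$. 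This step is where the two growth conditions on $r$ and $\xi$ are used together, and it is the crux of the argument.

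Finally I would let $k\to\infty$ in the residue-theorem identity. Because $r$ is rational, $S$ is a finite set and is eventually contained in every disk $\{|s|<\rho_k\}$; thus the $S$-part of the sum stabilises to $\sum_{\be\in S}{\rm Res}(r(s)\xi(s),\be)$. The set $O$ may be infinite, so the $O$-part is genuinely the limit of its partial sums over $O\cap\{|s|<\rho_k\}$; the vanishing of the integral then forces this limit to exist and to equal $-\sum_{\be\in S}{\rm Res}(r(s)\xi(s),\be)$, which is precisely \eqref{Cau.Lind}. The one point that needs care---and the only real obstacle---is the interplay between the chosen sequence of radii $\rho_k$ and the (possibly only conditional) convergence of $\sum_{\al\in O}$: the identity should be read as asserting exactly that the partial sums indexed by the growing circles converge to the stated limit, a fact that is guaranteed by the kernel-function hypotheses rather than established independently.
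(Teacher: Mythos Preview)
Your argument is correct and is exactly the standard Cauchy--Lindel\"of contour-integration proof that Flajolet and Salvy give in \cite{FlSa98}; the present paper does not supply a proof of this lemma at all but merely quotes it with citation, so there is nothing further to compare.
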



\section{Linear sums}\label{Sec.lin}

Flajolet and Salvy \cite[Theorem 3.1]{FlSa98} applied the kernel function
\[
\frac{1}{2}\pi\cot(\pi s)\frac{\psi^{(p-1)}(-s)}{(p-1)!}
\]
to the base function $r(s)=s^{-q}$, and proved once again that all the linear Euler sums $S_{p,q}$ with an odd weight $w=p+q$ are reducible to polynomials in zeta values, which is a result extrapolated first by Euler without proof, and verified by Borwein et al. in \cite{BorBG95.PEM}.

Let $A=\{a_k\}_{k\in\mathbb{Z}}$ and $B=\{b_k\}_{k\in\mathbb{Z}}$ be real sequences satisfying $\lim_{k\to\infty}a_k/k=0$ and $\lim_{k\to\infty}b_k/k=0$. Now, we replace Flajolet and Salvy's function $\cot(\pi s)\psi^{(p-1)}(-s)$ by
\[
\cot(\pi s;A)\vPsi^{(p-1)}(\tf{1}{2}-s;B)\,,
\]
and use the residue computations to establish two general identities, which further yield the expressions of the linear (alternating) Euler $T$-sums and $\S$-sums.


\subsection{The first general identity and linear Euler $T$-sums}\label{Sec.lin.Tsum}

\begin{theorem}\label{Th.linear.sum1}
For integers $p\geq1$ and $q\geq2$, the following identity on sums related to the sequences $A$ and $B$ holds:
\begin{align*}
&(-1)^q\sum_{n=1}^\infty\frac{a_{n-1}\bar{N}\b_n(p)}{(n-1/2)^q}
    +(-1)^p\sum_{n=1}^\infty\frac{a_nN\b_n(p)}{(n-1/2)^q}
-\sum_{k=0}^{p-1}\binom{p+q-k-2}{q-1}\sum_{n=1}^\infty
    \frac{b_n S\a_{n+1}(k+1)}{n^{p+q-k-1}}\\
&\quad=-\sum_{k=1}^q\binom{p+k-2}{p-1}D\b(p+k-1)\check{t}\a(q-k+1)\\
&\quad\quad+b_0\{(-1)^q\hat{t}\a(p+q)+(-1)^p\t\a(p+q)\}\,.
\end{align*}
\end{theorem}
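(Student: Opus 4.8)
The plan is to apply the Flajolet--Salvy residue theorem (Lemma \ref{Lem.Res}) to the kernel
\[
\xi(s)=\tf{1}{2}\pi\cot(\pi s;A)\frac{\vPsi^{(p-1)}(\tf{1}{2}-s;B)}{(p-1)!}
\]
paired with the base function $r(s)=(s-\tf{1}{2})^{-q}$, which is $O(s^{-2})$ at infinity precisely because $q\geq2$. First I would locate the poles of $r(s)\xi(s)$: the factor $\pi\cot(\pi s;A)$ contributes simple poles at every integer $s=n$ (Lemma \ref{Lem.cot.nZ}), the factor $\vPsi^{(p-1)}(\tf{1}{2}-s;B)$ contributes poles of order $p$ at every positive half-integer $s=n-\tf{1}{2}$ (Lemma \ref{Lem.Psi.hn}), and $r(s)$ contributes a pole of order $q$ at $s=\tf{1}{2}$, which merges with the $\vPsi$-pole there into a single confluent pole of order $p+q$. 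Since the two infinite families of poles are otherwise disjoint and $\xi(s)=o(s)$ on the usual circles, Lemma \ref{Lem.Res} reduces the whole problem to evaluating four groups of residues whose sum vanishes.

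The residues at the integer poles produce the two $N$-type sums on the left. At a positive integer $s=n$ the cotangent has residue $a_n$ and the remaining factor is analytic with value $N\b_n(p)$ by the leading term of (\ref{2.1}); summing over $n\geq1$ yields $\sum_n a_nN\b_n(p)/(n-\tf{1}{2})^q$. At $s=-n$ I would instead use the expansion (\ref{2.2}), whose leading coefficient is $(-1)^p\bar{N}\b_{n+1}(p)$, together with $r(-n)=(-1)^q(n+\tf{1}{2})^{-q}$; after the shift $n\mapsto n-1$ this gives $\sum_n a_{n-1}\bar{N}\b_n(p)/(n-\tf{1}{2})^q$, the $s=0$ residue supplying exactly the missing $n=1$ endpoint. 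The residues at the remaining half-integers $s=n-\tf{1}{2}$ with $n\geq2$ produce the $S$-type sum: here the principal part of $\vPsi^{(p-1)}(\tf{1}{2}-s;B)/(p-1)!$ is, by Lemma \ref{Lem.Psi.hn}, the single term $b_{n-1}/(s-n+\tf{1}{2})^p$, so the residue is $b_{n-1}$ times the coefficient of $(s-n+\tf{1}{2})^{p-1}$ in the analytic factor $\pi\cot(\pi s;A)(s-\tf{1}{2})^{-q}$. Expanding the cotangent through Lemma \ref{Lem.cot.hn} (coefficients $S\a_n$) and $(s-\tf{1}{2})^{-q}$ binomially, the resulting convolution, after the shift $n\mapsto n+1$, is exactly $\sum_k\binom{p+q-k-2}{q-1}\sum_n b_nS\a_{n+1}(k+1)/n^{p+q-k-1}$.

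The confluent pole at $s=\tf{1}{2}$ produces the entire right-hand side. I would insert the Laurent expansion (\ref{Psi.s.0.5}), whose principal part is the pure term $b_0/(s-\tf{1}{2})^p$ and whose regular part carries the coefficients $D\b(p+k-1)$, together with the Taylor expansion of $\pi\cot(\pi s;A)$ about $s=\tf{1}{2}$ furnished by (\ref{cot.dm.lim}), whose coefficients are $\check{t}\a(m+1)$, and then extract the coefficient of $(s-\tf{1}{2})^{-1}$. The cross terms between the regular part of $\vPsi$ and the cotangent give the double sum $\sum_{k=1}^q\binom{p+k-2}{p-1}D\b(p+k-1)\check{t}\a(q-k+1)$, while the $b_0$-term contributes $b_0\check{t}\a(p+q)$; rewriting this last quantity through $\check{t}\a(p+q)=(-1)^{p+q-1}\hat{t}\a(p+q)-\t\a(p+q)=(-1)^{p-1}\{(-1)^q\hat{t}\a(p+q)+(-1)^p\t\a(p+q)\}$ turns it into the bracketed expression in the statement. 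Setting the total residue sum to zero, multiplying by the global normalization $2(-1)^p$ (which reproduces the signs $(-1)^q$, $(-1)^p$, $-1$ attached to the three left-hand sums), and moving the $s=\tf{1}{2}$ contribution to the right then yields the asserted identity.

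The main obstacle is the confluent residue at $s=\tf{1}{2}$: it is a pole of order $p+q$ in a product of three series, and organizing the double convolution into the clean binomial form of the $D\b\,\check{t}\a$ double sum---while keeping the $\check{t}\a$, $\hat{t}\a$, $\t\a$ identities straight for the $b_0$-term---requires care. A secondary but genuine difficulty is the boundary bookkeeping: one must verify that the $s=0$ residue and the $n=1$ endpoints of the $N$- and $\bar{N}$-sums combine so that every series starts cleanly at $n=1$, and that the degenerate conventions $\ze(1):=0$ and $\t(1):=2\ln(2)$ are respected wherever weight-one arguments occur.
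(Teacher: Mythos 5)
Your proposal is correct and follows essentially the same route as the paper: the same kernel $\pi\cot(\pi s;A)\vPsi^{(p-1)}(\tfrac{1}{2}-s;B)/(p-1)!$ (your extra factor $\tfrac{1}{2}$ is harmless and is absorbed by your final normalization $2(-1)^p$), the same base function $(s-\tfrac{1}{2})^{-q}$, and the same four groups of residues evaluated via Lemmas \ref{Lem.Psi.nZ}, \ref{Lem.Psi.hn}, \ref{Lem.cot.nZ}, \ref{Lem.cot.hn} and Eqs. (\ref{Psi.s.0.5}), (\ref{cot.dm.lim}). The only caveat is expository: the half-integer residue convolution carries a factor $(-1)^{p-1}$ that your middle paragraph omits but your final sign bookkeeping implicitly uses; with that factor written in, every step checks out exactly as in the paper.
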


\begin{proof}
We consider the kernel function
\begin{equation}\label{kerfun.xi1}
\xi_1(s)=\pi\cot(\pi s;A)\frac{\vPsi^{(p-1)}(\tf{1}{2}-s;B)}{(p-1)!}
\end{equation}
and the base function $r_1(s)=(s-1/2)^{-q}$. Clearly, for the function $\mathcal{F}(s):=\xi_1(s)r_1(s)$, the only singularities are poles at the integers and half-integers $n-1/2$, where $n\in\mathbb{N}$. At an integer, the pole is simple. Then by Lemmas \ref{Lem.Psi.nZ} and \ref{Lem.cot.nZ}, the residue is
\[
{\rm Res}(\mathcal{F}(s),-n)=(-1)^{p+q}\frac{a_n\bar{N}\b_{n+1}(p)}{(n+1/2)^q}\,,\quad
    \text{for } n\geq 0\,,
\]
or
\[
{\rm Res}(\mathcal{F}(s),n)=\frac{a_nN\b_n(p)}{(n-1/2)^q}\,,\quad
    \text{for } n\geq 1\,.
\]
By Lemmas \ref{Lem.Psi.hn} and \ref{Lem.cot.hn}, the pole at a half-integer $n-1/2$ for $n\geq 2$ has order $p$, and the residue is
\[
{\rm Res}(\mathcal{F}(s),n-\tf{1}{2})
    =(-1)^{p-1}b_{n-1}\sum_{k=0}^{p-1}\binom{p+q-k-2}{q-1}\frac{S\a_{n}(k+1)}{(n-1)^{p+q-k-1}}\,.
\]
Next, according to Eq. (\ref{Psi.s.0.5}), the pole at $1/2$ has order $p+q$, and Eq. (\ref{cot.dm.lim}) shows that the residue is
\begin{align*}
{\rm Res}(\mathcal{F}(s),\tf{1}{2})
&=-b_0\{(-1)^{p+q}\hat{t}\a(p+q)+\t\a(p+q)\}\\
&\quad+(-1)^p\sum_{k=1}^q\binom{p+k-2}{p-1}D\b(p+k-1)\check{t}\a(q-k+1)\,.
\end{align*}
Summing these contributions yields the statement of the theorem.
\end{proof}

For simplicity, denote
\begin{equation}
\de_{\si}^k=1-\si(-1)^k\,,\quad\text{for }\si=\pm1\,.
\end{equation}
Thus, setting $A,B\in\{A_1,A_2\}$ in Theorem \ref{Th.linear.sum1} gives the following result.

\begin{theorem}\label{Th.lin.Tsum}
For integers $p\geq1$ and $q\geq2$, the linear Euler $T$-sums $T_{p,q}^{\si_1,\si_2}$ satisfy
\begin{align}
(-1)^p\de_{\si_1\si_2}^{p+q}T_{p,q}^{\si_1,\si_2}
&=(-1)^q\t(p+q;\si_1\si_2)-\si_2\de_{\si_2}^{q-1}\t(p;\si_1)\t(q;\si_2)\nonumber\\
&\quad+\sum_{k=1}^p\de_{\si_1\si_2}^{p-k}\binom{q+k-2}{q-1}
    \t(p-k+1;\si_1\si_2)\ze(q+k-1;\si_2)\nonumber\\
&\quad+\si_1\si_2\sum_{k=1}^q\de_{\si_1\si_2}^{q-k}\binom{p+k-2}{p-1}
    \t(q-k+1;\si_1\si_2)\ze(p+k-1;\si_1)\,,\label{Tpq}
\end{align}
where $\si_1,\si_2\in\{\pm1\}$, $\ze(1):=0$ and $\t(1):=2\ln(2)$.
\end{theorem}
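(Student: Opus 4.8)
The plan is to obtain Theorem~\ref{Th.lin.Tsum} as the specialization of the general identity in Theorem~\ref{Th.linear.sum1} to the four choices $(A,B)\in\{A_1,A_2\}^2$, translating each sequence-sum into a concrete harmonic quantity through the $A_1$ and $A_2$ evaluation tables of Section~\ref{Sec.Notation.Exp}. The decisive preliminary step is to fix the dictionary between the sequences and the signs $\si_1,\si_2$. Since the digamma factor $\vPsi^{(p-1)}(\tf12-s;B)$ carries the odd harmonic numbers, the sequence $B$ governs the inner sign, so $B=A_1,A_2$ corresponds to $\si_1=+1,-1$ (that is, $h_n^{(p)}(\si_1)$ is $h_n^{(p)}$ or $\bar h_n^{(p)}$). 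The outer alternation $\si_2^{n-1}$ is manufactured jointly: the factor $(-1)^n$ sitting inside $N_n^{(A_2)}$ and $\bar N_n^{(A_2)}$ combines with the coefficient $a_n$ coming from $\cot(\pi s;A)$, so the cotangent sequence $A$ controls the product $\si_1\si_2$, and the outer sign is $\si_2=\si_A\si_B$. Thus $(A_1,A_1),(A_1,A_2),(A_2,A_1),(A_2,A_2)$ realize $(\si_1,\si_2)=(+,+),(-,-),(+,-),(-,+)$, respectively, and throughout one has $a_0=b_0=1$, $b_n=\si_1^{\,n}$, $a_n=(\si_1\si_2)^{\,n}$.

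First I would reduce the two leading sums $(-1)^q\sum_n a_{n-1}\bar N_n^{(B)}(p)(n-\tf12)^{-q}$ and $(-1)^p\sum_n a_n N_n^{(B)}(p)(n-\tf12)^{-q}$. Inserting the tabulated values of $N_n^{(B)}$ and $\bar N_n^{(B)}$ and using the one-step recurrence $h_n^{(p)}(\si_1)=h_{n-1}^{(p)}(\si_1)+\si_1^{\,n-1}(n-\tf12)^{-p}$ to align the two harmonic sums at index $n-1$, these combine into a single multiple of the Euler $T$-sum $T_{p,q}^{\si_1,\si_2}$. A short parity count of the $(-1)^p$, $(-1)^q$ and sign prefactors shows the coefficient of $T_{p,q}^{\si_1,\si_2}$ is $\si_1\si_2\,(-1)^p\de_{\si_1\si_2}^{p+q}$. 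The index shift produces a diagonal $\t(p+q;\si_1\si_2)$ contribution, and the $\t(p;\si_1)$-type constant parts of $N_n^{(B)},\bar N_n^{(B)}$ — after their residual $(-1)^n$ factor (present when $B=A_2$) is absorbed into $a_n$ — sum against $(n-\tf12)^{-q}$ to give $\t(q;\si_2)$, producing the cross term $\t(p;\si_1)\t(q;\si_2)$ whose coefficient collects to $-\si_2\de_{\si_2}^{q-1}$.

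Next I would handle the remaining pieces. The third sum on the left of Theorem~\ref{Th.linear.sum1}, namely $-\sum_{k=0}^{p-1}\binom{p+q-k-2}{q-1}\sum_n b_n S_{n+1}^{(A)}(k+1)\,n^{-(p+q-k-1)}$, collapses: $S_{n+1}^{(A)}(k+1)$ contributes only a parity factor $1\pm(-1)^k$ and, in the $A_2$ case, a sign $(-1)^n$ that combines with $b_n=\si_1^{\,n}$ to leave the base $\si_2$, so the inner sum over $n$ is $\ze(p+q-k-1;\si_2)$ times a $\de$-symbol; after the reindexing $k\mapsto p-k$ it becomes the family $\sum_{k=1}^p\de_{\si_1\si_2}^{p-k}\binom{q+k-2}{q-1}\t(p-k+1;\si_1\si_2)\ze(q+k-1;\si_2)$. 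On the right of Theorem~\ref{Th.linear.sum1}, $D^{(B)}(p+k-1)$ becomes $\ze(p+k-1;\si_1)$ and $\check t^{(A)}$ becomes $\t(\,\cdot\,;\si_1\si_2)$ with a parity $\de$-factor, giving the family $\si_1\si_2\sum_{k=1}^q\de_{\si_1\si_2}^{q-k}\binom{p+k-2}{p-1}\t(q-k+1;\si_1\si_2)\ze(p+k-1;\si_1)$; the boundary term $b_0\{(-1)^q\hat t^{(A)}(p+q)+(-1)^p\t^{(A)}(p+q)\}$ supplies a further $\t(p+q;\si_1\si_2)$ contribution that merges with the diagonal found above to leave the coefficient $(-1)^q$. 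Multiplying the whole specialized identity by $\si_1\si_2$ finally normalizes the $T$-coefficient to $(-1)^p\de_{\si_1\si_2}^{p+q}$ and redistributes the sign onto the right-hand terms, yielding exactly~(\ref{Tpq}).

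The main obstacle is the uniform, case-free bookkeeping rather than any single hard estimate. All four choices of $(A,B)$ must be merged into one $\si_1,\si_2$ statement, and the recurring factors $1\pm(-1)^k$ (which alternately annihilate or double a term according to the parities of $k$ and of $p+q$) have to be packaged consistently into the symbols $\de_\si^k=1-\si(-1)^k$; a single sign slip here would misplace which terms survive, and the global $\si_1\si_2$ normalization must be tracked carefully since it changes sign between the $\si_1\si_2=+1$ and $\si_1\si_2=-1$ classes. The second delicate point is the $j=1$ anomalies of the $A_2$ table: the $\de_{j,1}\ln2$ corrections in $N_n^{(A_2)},\bar N_n^{(A_2)},D^{(A_2)},\hat t^{(A_2)},\t^{(A_2)}$, together with the conventions $\ze(1):=0$ and $\t(1):=2\ln2$, must be carried through the boundary terms where a $\t$- or $\ze$-argument drops to $1$, so that the logarithmic contributions either cancel or are absorbed into the $\ze(1)$ and $\t(1)$ slots; it is precisely these conventions that let the formula be stated without exceptional cases.
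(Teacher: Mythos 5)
Your proposal is correct and takes essentially the same route as the paper: specialize the general identity of Theorem \ref{Th.linear.sum1} to the four choices $(A,B)\in\{A_1,A_2\}^2$, substitute the $A_1$/$A_2$ evaluation tables of Section \ref{Sec.Notation.Exp}, and merge the four resulting identities into the unified form (\ref{Tpq}). Your dictionary $(A_1,A_1),(A_1,A_2),(A_2,A_1),(A_2,A_2)\mapsto(\si_1,\si_2)=(+,+),(-,-),(+,-),(-,+)$, the index-shift producing the diagonal $\t(p+q;\si_1\si_2)$ term, and the final normalization by $\si_1\si_2$ are precisely the ``transformations'' the paper's proof leaves implicit, so the only difference is that you spell them out.
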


\begin{proof}
Setting $A=A_1$ and $B=A_2$ in Theorem \ref{Th.linear.sum1}, substituting the evaluations of $\bar{N}_n^{(A_2)}(p)$, $N_n^{(A_2)}(p)$ and $S_{n+1}^{(A_1)}(k+1)$, etc. which are listed in Section \ref{Sec.Notation.Exp}, and doing some transformations, we obtain the result on the linear Euler $T$-sums $T_{\bar{p},\bar{q}}$:
\begin{align*}
&(-1)^p(1-(-1)^{p+q})\sum_{n=1}^\infty(-1)^{n-1}\frac{\bar{h}_{n-1}^{(p)}}{(n-1/2)^q}\\
&\quad=(-1)^q\t(p+q)+(1-(-1)^q)\t(\bar{p})\t(\bar{q})\\
&\quad\quad+\sum_{k=1}^p(1-(-1)^{p-k})\binom{q+k-2}{q-1}\t(p-k+1)\ze(\overline{q+k-1})\\
&\quad\quad+\sum_{k=1}^q(1-(-1)^{q-k})\binom{p+k-2}{p-1}\t(q-k+1)\ze(\overline{p+k-1})\,.
\end{align*}
Similarly, letting $(A,B)$ be $(A_1,A_1)$, $(A_2,A_1)$ and $(A_2,A_2)$ gives the results on the linear sums $T_{p,q}$, $T_{p,\bar{q}}$ and $T_{\bar{p},\bar{q}}$. All of the four cases can be covered by the general expression given in the theorem.
\end{proof}

\begin{remark}
Note that when $q=1$, the alternating linear sums $T_{p,\bar{1}}$ and $T_{\bar{p},\bar{1}}$ also satisfy the expression (\ref{Tpq}). We leave the details to the interested readers.
\end{remark}

By considering the parity of the parameters in Eq. (\ref{Tpq}), we obtain some specializations, which are showed in Corollaries \ref{Coro.Tpq.odd} and \ref{Coro.Tpq.even}.

\begin{corollary}\label{Coro.Tpq.odd}
For any odd weight $w=p+q$, the linear Euler $T$-sums $T_{p,q}$ are reducible to $\pi$, $\ln(2)$, and zeta values $\ze(3),\ze(5),\ldots$, while the linear Euler $T$-sums $T_{\bar{p},\bar{q}}$ are reducible to $\pi$, $\ln(2)$, $G$, zeta values $\ze(3),\ze(5),\ldots$, and the Dirichlet beta values $\be(4),\be(6),\ldots$.
\end{corollary}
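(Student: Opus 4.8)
The plan is to derive both reduction statements directly from the master formula (\ref{Tpq}) by analyzing the parity of its right-hand side. The key structural observation is that $T_{p,q}$ corresponds to the choice $\si_1=\si_2=1$, while $T_{\bar{p},\bar{q}}$ corresponds to $\si_1=\si_2=-1$ (so $\si_1\si_2=1$ in both cases). Consequently the prefactor $\de_{\si_1\si_2}^{p+q}=1-(-1)^{p+q}$ equals $2$ precisely when $w=p+q$ is odd, which is exactly the regime of the corollary; this guarantees the left side is a nonzero multiple of the target sum and that the identity genuinely solves for it.

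First I would specialize (\ref{Tpq}) to each sign pattern. For $T_{p,q}$ (both signs $+1$), every $\si$-argument collapses: the terms involve $\t(m)$ and $\ze(m)$ with \emph{unbarred} arguments only. I would then invoke the elementary reductions already recorded in the paper, namely $\t(s)=(2^s-1)\ze(s)$ for $s\ge 2$, together with $\ze(1):=0$, $\t(1):=2\ln(2)$, and the fact that $\be(2)=G$ does \emph{not} enter because no barred $t$-values appear. The weight bookkeeping shows each product $\t(p-k+1)\ze(q+k-1)$ and $\t(q-k+1)\ze(p+k-1)$ has total weight $w$, and since $w$ is odd each such product is a product of two zeta values of complementary parities; combined with the classical fact that $\ze$ at \emph{even} arguments is a rational multiple of a power of $\pi$, every term reduces to a polynomial in $\pi$, $\ln(2)$, and the odd zeta values $\ze(3),\ze(5),\ldots$. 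The lone $\t(p+q;\,1)=\t(w)$ term is itself $(2^w-1)\ze(w)$, an odd zeta value since $w$ is odd.

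For $T_{\bar{p},\bar{q}}$ (both signs $-1$, so $\si_1\si_2=1$ again) the right side now mixes \emph{barred} and \emph{unbarred} arguments: the factors $\ze(q+k-1;\si_2)=\ze(\overline{q+k-1})$ and $\ze(p+k-1;\si_1)=\ze(\overline{p+k-1})$ carry bars, while the $\t(\cdot;\si_1\si_2)=\t(\cdot;1)$ factors remain unbarred, and the isolated term $\si_2\de_{\si_2}^{q-1}\t(p;\si_1)\t(q;\si_2)=-\de_{-1}^{q-1}\t(\bar p)\t(\bar q)$ is a product of two barred $\t$-values. Here I would use $\ze(\bar s)=(2^{1-s}-1)\ze(s)$ from (\ref{AZV}), which keeps the barred zeta values inside the $\pi$/$\ln(2)$/odd-$\ze$ span, and crucially $\t(\bar s)=-\be(s)$, which is what injects the Dirichlet beta values. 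A weight/parity count then shows that the beta values surviving after simplification sit at \emph{even} arguments: $\be(2)=G$ and $\be(4),\be(6),\ldots$ (the odd-argument $\be(2k+1)$ being rational multiples of powers of $\pi$ by the formula quoted in the introduction), which accounts exactly for the stated spanning set $\pi,\ln(2),G,\ze(3),\ze(5),\ldots,\be(4),\be(6),\ldots$.

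The main obstacle is the parity bookkeeping, not any deep new input: I must verify carefully that in each summation over $k$ the combination of the binomial-weighted products, after applying $\t(s)=(2^s-1)\ze(s)$ and $\ze(\bar s)=(2^{1-s}-1)\ze(s)$ and $\t(\bar s)=-\be(s)$, lands in the claimed span for \emph{every} admissible $(p,q)$ with $p+q$ odd — in particular that no even-weight odd-zeta value (such as $\ze(2),\ze(4),\ldots$, which would introduce a genuinely new constant only through its known $\pi$-power form) and no odd-argument beta value escapes the reduction. I would handle this by splitting each sum according to the parity of the exponent appearing in $\ze$ or $\be$, using that $w$ odd forces the two exponents in each product to have opposite parities, so that in every product exactly one factor is a $\pi$-power (even zeta or odd beta) and the other is an odd zeta, a barred zeta, or $\ln(2)$ via the $\t(1)$ convention. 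Once this parity dichotomy is established term by term, both reductions follow immediately, and the distinction between the two spanning sets is seen to come solely from the presence of the barred $\t$-values $\t(\bar s)=-\be(s)$ in the $T_{\bar{p},\bar{q}}$ case.
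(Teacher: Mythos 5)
Your proposal is correct and is essentially the paper's own argument: the corollary is obtained directly from Theorem \ref{Th.lin.Tsum} by exactly the parity analysis you describe --- for odd $w=p+q$ the prefactor $\de_{\si_1\si_2}^{p+q}=2$ lets one solve Eq. (\ref{Tpq}) for $T_{p,q}$ and $T_{\bar{p},\bar{q}}$, and the factors $\de_{\si_1\si_2}^{p-k}$, $\de_{\si_1\si_2}^{q-k}$ force each surviving product to pair an even-argument $\t$-value (a $\pi$-power) with an odd-argument $\ze$ or $\ze(\bar{s})$ (an odd zeta value or $\ln(2)$), the barred $\t$-values in the isolated term supplying $G$ and $\be(4),\be(6),\ldots$ in the $T_{\bar{p},\bar{q}}$ case. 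The only slip is notational: by Eq. (\ref{AZV}) it is $t(\bar{s})=-\be(s)$, hence $\t(\bar{s})=2^{s}t(\bar{s})=-2^{s}\be(s)$ rather than $-\be(s)$, a harmless rational factor that does not affect the reduction.
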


\begin{example}
By further specifying the parameters, the following evaluations can be established:
\begin{align*}
&T_{1,2}=-\tf{7}{2}\ze(3)+\pi^2\ln(2)\,,\\
&T_{1,4}=-\tf{31}{2}\ze(5)-\tf{1}{2}\pi^2\ze(3)+\tf{1}{3}\pi^4\ln(2)\,,\\
&T_{2,3}=-\tf{31}{2}\ze(5)+\tf{3}{2}\pi^2\ze(3)\,,\\
&T_{3,2}=-\tf{31}{2}\ze(5)+2\pi^2\ze(3)\,,\\
&T_{1,6}=-\tf{127}{2}\ze(7)-\tf{1}{2}\pi^2\ze(5)-\tf{1}{6}\pi^4\ze(3)
    +\tf{2}{15}\pi^6\ln(2)\,,\\
&T_{2,5}=-\tf{127}{2}\ze(7)+\tf{5}{2}\pi^2\ze(5)+\tf{1}{3}\pi^4\ze(3)\,,\\
&T_{3,4}=-\tf{127}{2}\ze(7)-5\pi^2\ze(5)+\pi^4\ze(3)\,,\\
&T_{4,3}=-\tf{127}{2}\ze(7)+5\pi^2\ze(5)+\tf{1}{6}\pi^4\ze(3)\,,\\
&T_{5,2}=-\tf{127}{2}\ze(7)+13\pi^2\ze(5)-\tf{1}{3}\pi^4\ze(3)\,.
\end{align*}
Note that the sum $T_{1,2}$ can be found in \cite[Eq. (33)]{Chen06}.\hfill\qedsymbol
\end{example}

\begin{example}
Similarly, we have
\begin{align*}
&T_{\bar{1},\bar{2}}=-\tf{7}{2}\ze(3)+\tf{1}{2}\pi^2\ln(2)\,,\\
&T_{\bar{2},\bar{1}}=-\tf{7}{2}\ze(3)+2\pi G-\tf{1}{2}\pi^2\ln(2)\,,\\
&T_{\bar{1},\bar{4}}=-\tf{31}{2}\ze(5)+\tf{3}{8}\pi^2\ze(3)+\tf{1}{6}\pi^4\ln(2)\,,\\
&T_{\bar{2},\bar{3}}=-\tf{31}{2}\ze(5)-\tf{9}{8}\pi^2\ze(3)+\pi^3G\,,\\
&T_{\bar{3},\bar{2}}=-\tf{31}{2}\ze(5)+\tf{9}{8}\pi^2\ze(3)\,,\\
&T_{\bar{4},\bar{1}}=-\tf{31}{2}\ze(5)+8\pi\be(4)-\tf{3}{8}\pi^2\ze(3)
    -\tf{1}{6}\pi^4\ln(2)\,,\\
&T_{\bar{1},\bar{6}}=-\tf{127}{2}\ze(7)+\tf{15}{32}\pi^2\ze(5)
    +\tf{1}{8}\pi^4\ze(3)+\tf{1}{15}\pi^6\ln(2)\,,\\
&T_{\bar{2},\bar{5}}=-\tf{127}{2}\ze(7)-\tf{75}{32}\pi^2\ze(5)-\tf{1}{4}\pi^4\ze(3)
    +\tf{5}{12}\pi^5G\,,\\
&T_{\bar{3},\bar{4}}=-\tf{127}{2}\ze(7)+\tf{75}{16}\pi^2\ze(5)
    +\tf{1}{8}\pi^4\ze(3)\,,\\
&T_{\bar{4},\bar{3}}=-\tf{127}{2}\ze(7)-\tf{75}{16}\pi^2\ze(5)
    +4\pi^3\be(4)-\tf{1}{8}\pi^4\ze(3)\,,\\
&T_{\bar{5},\bar{2}}=-\tf{127}{2}\ze(7)+\tf{75}{32}\pi^2\ze(5)+\tf{1}{4}\pi^4\ze(3)\,,\\
&T_{\bar{6},\bar{1}}=-\tf{127}{2}\ze(7)+32\pi\be(6)-\tf{15}{32}\pi^2\ze(5)
    -\tf{1}{8}\pi^4\ze(3)-\tf{1}{15}\pi^6\ln(2)\,.
\end{align*}
\end{example}

\begin{corollary}\label{Coro.Tpq.even}
For any even weight $w=p+q$, the linear Euler $T$-sums $T_{p,\bar{q}}$ and $T_{\bar{p},q}$ are reducible to $\pi$, $\ln(2)$, $G$, zeta values $\ze(3),\ze(5),\ldots$, and the Dirichlet beta values $\be(4),\be(6),\ldots$.
\end{corollary}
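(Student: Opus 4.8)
The plan is to derive both reductions directly from the master identity (\ref{Tpq}) of Theorem \ref{Th.lin.Tsum} by specializing the signs. For $T_{p,\bar q}$ I take $(\si_1,\si_2)=(1,-1)$ and for $T_{\bar p,q}$ I take $(\si_1,\si_2)=(-1,1)$, so that in both cases $\si_1\si_2=-1$. Since the weight $w=p+q$ is even we have $(-1)^{p+q}=1$, whence the coefficient of the sum on the left of (\ref{Tpq}) is
\[
(-1)^p\de_{\si_1\si_2}^{p+q}=(-1)^p\bigl(1-(-1)\cdot(-1)^{p+q}\bigr)=2(-1)^p\neq0 .
\]
Thus (\ref{Tpq}) can be solved for $T_{p,\bar q}$ (resp.\ $T_{\bar p,q}$), and it remains only to show that every term on its right-hand side lies in the $\mathbb{Q}$-algebra generated by $\pi$, $\ln(2)$, $G$, the odd zeta values $\ze(3),\ze(5),\dots$, and the even beta values $\be(4),\be(6),\dots$.

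The verification is a parity bookkeeping resting on the single-variable evaluations recorded in Section \ref{Sec.Intro}, namely $\t(\bar m)=-2^m\be(m)$, $\ze(\bar m)=(2^{1-m}-1)\ze(m)$ with $\ze(\bar1)=-\ln(2)$, and $\t(m)=(2^m-1)\ze(m)$ with $\t(1)=2\ln(2)$; recall that $\be(s)$ at odd $s$ and $\ze(s)$ at even $s$ are rational multiples of powers of $\pi$, while $\be(2)=G$. The crucial point is that each factor $\de_{\si}^{k}=1-\si(-1)^k$ occurring in (\ref{Tpq}) annihilates every summand except those of one fixed parity in $k$, and that the evenness of $w$ forces $p\equiv q\pmod 2$. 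On the surviving terms the arguments $p-k+1$ and $q-k+1$ of the $\t(\bar\cdot)$ factors are odd, so these factors reduce to powers of $\pi$, whereas the companion arguments $q+k-1$ and $p+k-1$ of the $\ze$ factors are likewise odd, so those reduce to odd zeta values (or, at argument $1$, to $\ln(2)$ or $0$). The only genuinely new constant is the leading term $(-1)^q\t(\overline{p+q})=-(-1)^q2^{p+q}\be(p+q)$, in which $p+q$ is even; it contributes exactly $G$ when $w=2$ and $\be(w)$ when $w\ge4$.

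The one place deserving attention is the product term $-\si_2\de_{\si_2}^{q-1}\t(p;\si_1)\t(q;\si_2)$: for $T_{\bar p,q}$ it survives only when $q$ (hence $p$) is even, and it then contributes $\t(\bar p)\,\t(q)=-2^p\be(p)\cdot(2^q-1)\ze(q)$, an even beta value $\be(p)$ of lower weight times a power of $\pi$. Since $p<w$, that beta value is one of $G,\be(4),\be(6),\dots$ and the product is admissible, so no even beta value outside the listed generators is ever introduced. The main obstacle is therefore not conceptual but organizational: one must run the four families of terms of (\ref{Tpq}) through the sign rules simultaneously and confirm uniformly that each surviving object lands on an admissible generator. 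Because $w$ even forces $p\equiv q$, only the two subcases $p,q$ both even and $p,q$ both odd occur, and treating these two suffices; dividing the resulting identities by $2(-1)^p$ then yields the two asserted reductions.
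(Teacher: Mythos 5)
Your proposal is correct and follows essentially the same route as the paper: the corollary is obtained there precisely by specializing the signs $(\si_1,\si_2)$ in Eq.~(\ref{Tpq}) of Theorem~\ref{Th.lin.Tsum} and tracking parity, which is exactly your argument with the details of the bookkeeping written out. Your sign computations, the observation that even weight forces $p\equiv q\pmod 2$ so the surviving $\t(\,\cdot\,;\si_1\si_2)$ and $\ze(\,\cdot\,;\si_i)$ factors all have odd arguments, and the treatment of the leading term $\t(\overline{p+q})$ and the product term $\t(\bar p)\t(q)$ are all accurate.
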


\begin{example}
For the linear Euler $T$-sums $T_{p,\bar{q}}$, we have
\begin{align*}
&T_{1,\bar{1}}=-2G+\tf{1}{2}\pi\ln(2)\,,\\
&T_{1,\bar{3}}=-8\be(4)+\tf{1}{2}\pi^3\ln(2)-\tf{7}{8}\pi\ze(3)\,,\\
&T_{2,\bar{2}}=-8\be(4)+\tf{7}{4}\pi\ze(3)\,,\\
&T_{3,\bar{1}}=-8\be(4)+\tf{21}{8}\pi\ze(3)-\tf{1}{4}\pi^3\ln(2)\,,\\
&T_{1,\bar{5}}=-32\be(6)-\tf{31}{32}\pi\ze(5)-\tf{1}{4}\pi^3\ze(3)
    +\tf{5}{24}\pi^5\ln(2)\,,\\
&T_{2,\bar{4}}=-32\be(6)+\tf{31}{8}\pi\ze(5)+\tf{1}{2}\pi^3\ze(3)\,,\\
&T_{3,\bar{3}}=-32\be(6)-\tf{93}{16}\pi\ze(5)+\tf{21}{16}\pi^3\ze(3)\,,\\
&T_{4,\bar{2}}=-32\be(6)+\tf{31}{8}\pi\ze(5)+\tf{3}{8}\pi^3\ze(3)\,,\\
&T_{5,\bar{1}}=-32\be(6)+\tf{465}{32}\pi\ze(5)-\tf{3}{16}\pi^3\ze(3)-\tf{5}{48}\pi^5\ln(2)\,.
\end{align*}
Note that the evaluation of $T_{1,\bar{1}}$ gives the corrected version of \cite[Eq. (4.5b)]{Chu97.HSR}:
\[
\sum_{n=0}^{\infty}(-1)^n\frac{O_n}{2n+1}=\frac{1}{4}T_{1,\bar{1}}
    =-\frac{1}{2}G+\frac{1}{8}\pi\ln(2)\,,
\]
where $O_n^{(r)}$ are also called odd harmonic numbers in the literature, defined by $O_0^{(r)}=0$, $O_n^{(r)}=\sum_{k=1}^n\frac{1}{(2k-1)^r}$, for $n,r=1,2,\ldots$, and satisfy
\[
O_n^{(p)}=H_{2n}^{(p)}-\frac{1}{2^p}H_n^{(p)}=\frac{1}{2^p}h_n^{(p)}\,.\hfill\qedsymbol
\]
\end{example}

\begin{example}
For the linear Euler $T$-sums $T_{\bar{p},q}$, we have
\begin{align*}
&T_{\bar{1},3}=-8\be(4)+\tf{7}{8}\pi\ze(3)+\tf{1}{4}\pi^3\ln(2)\,,\\
&T_{\bar{2},2}=-8\be(4)-\tf{7}{4}\pi\ze(3)+2\pi^2G\,,\\
&T_{\bar{1},5}=-32\be(6)+\tf{31}{32}\pi\ze(5)+\tf{3}{16}\pi^3\ze(3)+\tf{5}{48}\pi^5\ln(2)\,,\\
&T_{\bar{2},4}=-32\be(6)-\tf{31}{8}\pi\ze(5)-\tf{3}{8}\pi^3\ze(3)+\tf{2}{3}\pi^4G\,,\\
&T_{\bar{3},3}=-32\be(6)+\tf{93}{16}\pi\ze(5)+\tf{7}{16}\pi^3\ze(3)\,,\\
&T_{\bar{4},2}=-32\be(6)-\tf{31}{8}\pi\ze(5)+8\pi^2\be(4)-\tf{1}{2}\pi^3\ze(3)\,.
\end{align*}
\end{example}

Next, let us give the parity theorem of the Hoffman double $t$-values.

\begin{theorem}
When the weight $w=p+q$ and the quantity $\frac{1+\si_1\si_2}{2}$ have the same parity, the double $t$-values $t(p,q;\si_1,\si_2)$ and double $\t$-values $\t(p,q;\si_1,\si_2)$ are reducible to combinations of $\pi$, $\ln(2)$, $G$, zeta values $\ze(3),\ze(5),\ldots$, and the Dirichlet beta values $\be(4),\be(6),\ldots$.
\end{theorem}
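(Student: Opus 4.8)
The plan is to reduce the double $\t$-values directly to the linear Euler $T$-sums already evaluated in Corollaries \ref{Coro.Tpq.odd} and \ref{Coro.Tpq.even}, by exhibiting an exact identity between the two objects. First I would unfold the definition (\ref{Tsum.Unify}) of the linear sum $T_{q,p}^{\si_2,\si_1}$. Writing $h_{n-1}^{(q)}(\si_2)=\sum_{k=1}^{n-1}\si_2^{k-1}/(k-1/2)^q$ and interchanging the order of summation gives
\[
T_{q,p}^{\si_2,\si_1}
    =\sum_{n>k\geq1}\frac{\si_1^{n-1}\si_2^{k-1}}{(n-1/2)^p(k-1/2)^q}
    =2^{p+q}\si_1\si_2\sum_{n>k\geq1}\frac{\si_1^{n}\si_2^{k}}{(2n-1)^p(2k-1)^q}\,,
\]
where I have used $(n-1/2)^p=(2n-1)^p/2^p$, $(k-1/2)^q=(2k-1)^q/2^q$ together with $\si_1^{n-1}=\si_1\si_1^n$ and $\si_2^{k-1}=\si_2\si_2^k$. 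The last double sum is, by definition, the alternating double $t$-value $t(p,q;\si_1,\si_2)$; recalling $\t(p,q;\si_1,\si_2)=2^{p+q}t(p,q;\si_1,\si_2)$ I obtain the exact identity
\[
\t(p,q;\si_1,\si_2)=\si_1\si_2\,T_{q,p}^{\si_2,\si_1}\,.
\]
Consequently both $\t(p,q;\si_1,\si_2)$ and $t(p,q;\si_1,\si_2)=2^{-(p+q)}\t(p,q;\si_1,\si_2)$ are reducible to the announced constants exactly when $T_{q,p}^{\si_2,\si_1}$ is.

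Next I would translate the parity hypothesis into a statement about the bar-pattern of $T_{q,p}^{\si_2,\si_1}$. The product of its two superscripts equals $\si_1\si_2$, so in the bar notation this sum is of type $T_{p,q}$ or $T_{\bar p,\bar q}$ when $\si_1\si_2=1$, and of type $T_{\bar p,q}$ or $T_{p,\bar q}$ when $\si_1\si_2=-1$. Since $\tf{1+\si_1\si_2}{2}$ equals $1$ for $\si_1\si_2=1$ and $0$ for $\si_1\si_2=-1$, the assumption that $w=p+q$ and $\tf{1+\si_1\si_2}{2}$ have the same parity amounts precisely to: $w$ odd in the first case and $w$ even in the second.

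It then suffices to invoke the two corollaries. If $\si_1\si_2=1$ and $w$ is odd, then $T_{q,p}^{\si_2,\si_1}$ is an equal-bar sum of odd weight, and Corollary \ref{Coro.Tpq.odd} reduces it to $\pi$, $\ln(2)$, $G$, the odd zeta values and the even Dirichlet beta values; if $\si_1\si_2=-1$ and $w$ is even, then $T_{q,p}^{\si_2,\si_1}$ is a mixed-bar sum of even weight, and Corollary \ref{Coro.Tpq.even} yields the same reduction. Multiplying by $\si_1\si_2$ transfers the evaluation to $\t(p,q;\si_1,\si_2)$, and dividing further by $2^{p+q}$ to $t(p,q;\si_1,\si_2)$, which finishes the argument.

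I expect the only delicate point to be the bookkeeping in the opening step---keeping track of the $2^{p+q}$ rescaling, the shifts $\si^{n-1}=\si\si^n$, and the transposition $(p,q)\mapsto(q,p)$ that swaps the roles of the harmonic order and the outer exponent---together with the boundary case $p=1$, where convergence of $t(p,q;\si_1,\si_2)$ forces $\si_1=-1$, so that $T_{q,p}^{\si_2,\si_1}$ carries a bar on its outer exponent (of power $1$) and is covered by the Remark following Theorem \ref{Th.lin.Tsum} rather than by the theorem itself. Beyond this accounting there is no genuine analytic obstacle once the identity $\t(p,q;\si_1,\si_2)=\si_1\si_2\,T_{q,p}^{\si_2,\si_1}$ is in hand.
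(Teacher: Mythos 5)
Your proposal is correct and is essentially the paper's own argument: you derive the same transformation identity (the paper states it as $T_{p,q}^{\si_1,\si_2}=\si_1\si_2\,\t(q,p;\si_2,\si_1)$, equivalent to your $\t(p,q;\si_1,\si_2)=\si_1\si_2\,T_{q,p}^{\si_2,\si_1}$ after relabeling) and then invoke Corollaries \ref{Coro.Tpq.odd} and \ref{Coro.Tpq.even} exactly as the paper does. Your additional bookkeeping on the boundary case $p=1$, $\si_1=-1$ (handled by the Remark after Theorem \ref{Th.lin.Tsum}) is a careful touch the paper leaves implicit.
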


\begin{proof}
It can be found that
\begin{equation}\label{Tpq.DMtV}
T_{p,q}^{\si_1,\si_2}
=\sum_{n=1}^{\infty}\si_2^{n-1}\frac{h_{n-1}^{(p)}(\si_1)}{(n-1/2)^q}
=\sum_{n=1}^{\infty}\frac{\si_2^{n-1}}{(n-1/2)^q}
    \sum_{k=1}^{n-1}\frac{\si_1^{k-1}}{(k-1/2)^p}
=\si_1\si_2\t(q,p;\si_2,\si_1)\,.
\end{equation}
Thus, the assertion of this theorem holds according to Corollaries \ref{Coro.Tpq.odd} and \ref{Coro.Tpq.even}.
\end{proof}

\begin{example}
Based on Eq. (\ref{Tpq.DMtV}) and Hoffman's result \cite[Eq. (4.4)]{Hoff19.AOV}, the linear $T$-sums $T_{p,q}$ can be expressed in terms as (alternating) MZVs:
\begin{equation}
T_{p,q}=\t(q,p)=2^{p+q-2}\{\ze(q,p)-\ze(q,\bar{p})-\ze(\bar{q},p)+\ze(\bar{q},\bar{p})\}\,,
\end{equation}
from which, we further obtain
\begin{align*}
&T_{1,3}=-16\Lii_4(\tf{1}{2})-\tf{2}{3}\ln(2)^4+\tf{2}{3}\pi^2\ln(2)^2+\tf{23}{360}\pi^4\,,\\
&T_{2,2}=\tf{1}{24}\pi^4\,,\\
&T_{1,5}=-32\ze(\bar{5},1)+62\ln(2)\ze(5)+\tf{17}{2}\ze(3)^2-\tf{73}{1260}\pi^6\,,\\
&T_{2,4}=7\ze(3)^2-\tf{1}{105}\pi^6\,,\\
&T_{3,3}=\tf{49}{2}\ze(3)^2-\tf{1}{30}\pi^6\,,\\
&T_{4,2}=\tf{11}{420}\pi^6-7\ze(3)^2\,.
\end{align*}
Additionally, based on the values of $t(\bar{1},\bar{1})$, $t(\bar{2},\bar{2})$ and $t(\bar{3},\bar{3})$ given in \cite[Section 6]{Hoff19.AOV}, the evaluations of the following sums can be established:
\begin{align*}
&T_{\bar{1},\bar{1}}=-\tf{1}{8}\pi^2\,,\\
&T_{\bar{2},\bar{2}}=-\tf{1}{12}\pi^4+8G^2\,,\\
&T_{\bar{3},\bar{3}}=-\tf{1}{480}\pi^6\,.
\end{align*}
More linear $T$-sums, which can not be covered by Theorem \ref{Th.lin.Tsum}, will be presented in Section \ref{Sec.linES.CMZV} by colored multiple zeta values.\hfill\qedsymbol
\end{example}


\subsection{The second general identity and linear Euler $\S$-sums}\label{Sec.lin.Stsum}

By applying the same kernel function to a different base function, another general identity can be established.

\begin{theorem}\label{Th.linear.sum2}
For integers $p\geq1$ and $q\geq2$, the following identity on sums related to the sequences $A$ and $B$ holds:
\begin{align*}
&(-1)^q\sum_{n=1}^\infty\frac{a_n\bar{N}\b_{n+1}(p)}{n^q}
    +(-1)^p\sum_{n=1}^\infty\frac{a_nN\b_n(p)}{n^q}
    -\sum_{k=0}^{p-1}\binom{p+q-k-2}{q-1}
        \sum_{n=1}^\infty\frac{b_{n-1}S\a_n(k+1)}{(n-1/2)^{p+q-k-1}}\\
&\quad=2\sum_{j=1}^{[q/2]}\binom{p+q-2j-1}{p-1}D\a(2j)\hat{t}\b(p+q-2j)
    -a_0\binom{p+q-1}{p-1}\hat{t}\b(p+q)\,.
\end{align*}
\end{theorem}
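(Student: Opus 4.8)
The plan is to re-run the residue computation of Theorem \ref{Th.linear.sum1} with the \emph{same} kernel function $\xi_1(s)=\pi\cot(\pi s;A)\frac{\vPsi^{(p-1)}(\tf12-s;B)}{(p-1)!}$ from (\ref{kerfun.xi1}), but paired now with the base function $r_2(s)=s^{-q}$ in place of $(s-1/2)^{-q}$. Writing $\mathcal{G}(s):=\xi_1(s)r_2(s)$, the singularities again sit at the integers and at the half-integers $n-1/2$. Since replacing the base shifts the ``anomalous'' point from $s=1/2$ to $s=0$, the pole structure now reads: simple poles at the nonzero integers $\pm n$ (from $\pi\cot(\pi s;A)$); poles of order $p$ at every half-integer $n-1/2$ with $n\geq1$ (from $\vPsi^{(p-1)}(\tf12-s;B)$, with $s=1/2$ now an \emph{ordinary} half-integer pole because $r_2$ is regular there); and one distinguished pole of order $q+1$ at $s=0$, where the simple pole of the cotangent and the order-$q$ pole of $r_2$ combine.

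First I would record the residues at the ordinary poles. At $s=n$ ($n\geq1$) the cotangent contributes its residue $a_n$, and the value of the digamma factor is read off from (\ref{2.1}) of Lemma \ref{Lem.Psi.nZ}, giving ${\rm Res}(\mathcal{G},n)=a_nN\b_n(p)/n^q$. At $s=-n$ ($n\geq1$) I would instead use (\ref{2.2}) together with the negative-index case of Lemma \ref{Lem.cot.nZ}; the factor $r_2(-n)=(-1)^qn^{-q}$ then produces ${\rm Res}(\mathcal{G},-n)=(-1)^{p+q}a_n\bar{N}\b_{n+1}(p)/n^q$. At the half-integer $n-1/2$ the cotangent is holomorphic, so I would expand it by the $m=0$ case of Lemma \ref{Lem.cot.hn}, expand $s^{-q}$ in powers of $s-n+\tf12$, and multiply by the order-$p$ Laurent tail of the digamma from Lemma \ref{Lem.Psi.hn}; extracting the coefficient of $(s-n+\tf12)^{-1}$ merges the two binomial factors into $\binom{p+q-k-2}{q-1}$ and yields ${\rm Res}(\mathcal{G},n-\tf12)=(-1)^{p-1}b_{n-1}\sum_{k=0}^{p-1}\binom{p+q-k-2}{q-1}S\a_n(k+1)/(n-\tf12)^{p+q-k-1}$, exactly as in Theorem \ref{Th.linear.sum1} but with the base evaluated at $n-\tf12$ rather than $n-1$.

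The crux is the residue at $s=0$. Here I would multiply the Laurent expansion (\ref{cot.s.0}), $\pi\cot(\pi s;A)=a_0/s-2\sum_{j\geq1}D\a(2j)s^{2j-1}$, by the Taylor expansion (\ref{Psi.s.0}) of the digamma factor and by $s^{-q}$, and collect the coefficient of $s^{-1}$, i.e. the coefficient of $s^{q-1}$ in the product of the first two series. The $a_0/s$ term selects the $s^{q}$-coefficient of the digamma series and produces $(-1)^pa_0\binom{p+q-1}{p-1}\hat{t}\b(p+q)$, while the term $-2D\a(2j)s^{2j-1}$ forces the index shift $i=q+1-2j$ (hence $1\le j\le[q/2]$) and produces $-2(-1)^p\sum_{j=1}^{[q/2]}\binom{p+q-2j-1}{p-1}D\a(2j)\hat{t}\b(p+q-2j)$. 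This is the only place where genuinely new bookkeeping appears, and I expect it to be the main obstacle: one must keep the parity constraint $2j\le q$ and the index shift $i+p-1\mapsto p+q-2j$ straight in order to land the exact coefficients on the right-hand side.

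Finally I would invoke Lemma \ref{Lem.Res}: since $r_2(s)=s^{-q}=O(s^{-2})$ at infinity (as $q\geq2$) and $\xi_1$ is an admissible kernel (as in Theorem \ref{Th.linear.sum1}), the sum of all the residues above vanishes. Multiplying the resulting identity through by $(-1)^p$ — which turns the two integer-pole signs into $(-1)^q$ and $(-1)^p$ respectively and the half-integer sign into $-1$ — and transposing the $s=0$ contribution to the right-hand side reproduces the stated identity verbatim.
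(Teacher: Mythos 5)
Your proposal is correct and follows essentially the same route as the paper's own (very brief) proof: apply the kernel $\xi_1(s)$ of (\ref{kerfun.xi1}) to the base $r_2(s)=s^{-q}$ and carry out the residue computation using Lemmas \ref{Lem.Psi.nZ}--\ref{Lem.cot.hn} together with Eqs. (\ref{Psi.s.0}) and (\ref{cot.s.0}). All four residue evaluations (at $\pm n$, at $n-\tfrac12$, and the order-$(q+1)$ pole at $s=0$) and the final multiplication by $(-1)^p$ check out against the stated identity.
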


\begin{proof}
The proof is similar to that of Theorem \ref{Th.linear.sum1}. In this case, apply the kernel function $\xi_1(s)$ in (\ref{kerfun.xi1}) to the base function $r_2(s)=s^{-q}$, and use the expansions in Lemmas \ref{Lem.Psi.nZ}--\ref{Lem.cot.hn} and Eqs. (\ref{Psi.s.0}) and (\ref{cot.s.0}) to perform the residue computation.
\end{proof}

Now, setting $A,B$ by $A_1$ or $A_2$, we obtain from Theorem \ref{Th.linear.sum2} the following result.

\begin{theorem}\label{Th.lin.Stsum}
For integers $p\geq1$ and $q\geq2$, the linear Euler $\S$-sums $\S_{p,q}^{\si_1,\si_2}$ satisfy
\begin{align}
\si_2(-1)^p\de_{\si_1}^{p+q}\S_{p,q}^{\si_1,\si_2}
&=-(1+(-1)^q)\t(p;\si_1)\ze(q;\si_2)-\binom{p+q-1}{p-1}\t(p+q;\si_1)\nonumber\\
&\quad+\sum_{k=1}^p\de_{\si_1\si_2}^{k-1}\binom{p+q-k-1}{q-1}
    \t(p+q-k;\si_2)\t(k;\si_1\si_2)\nonumber\\
&\quad+2\sum_{k=1}^{[\frac{q}{2}]}\binom{p+q-2j-1}{p-1}
    \t(p+q-2j;\si_1)\ze(2j;\si_1\si_2)\,,\label{lin.Stsum}
\end{align}
where $\si_1,\si_2\in\{\pm1\}$, $\ze(1):=0$ and $\t(1):=2\ln(2)$.
\end{theorem}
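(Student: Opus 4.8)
The plan is to derive this theorem as the $\S$-sum counterpart of Theorem \ref{Th.lin.Tsum}, by specializing the general identity of Theorem \ref{Th.linear.sum2} to each of the four pairs $(A,B)\in\{A_1,A_2\}^2$ and checking that the four resulting evaluations of $\S_{p,q}$, $\S_{\bar p,q}$, $\S_{p,\bar q}$ and $\S_{\bar p,\bar q}$ merge into the single identity (\ref{lin.Stsum}). The governing dictionary, read off from the tabulated values in Section \ref{Sec.Notation.Exp}, is that $B$ controls the type of odd harmonic number in the $N\b$- and $\bar N\b$-factors and hence corresponds to $\si_1$ (with $A_1\leftrightarrow\si_1=1$ and $A_2\leftrightarrow\si_1=-1$), while the sequence $A$ controls the sign attached to the $S\a$- and $D\a$-factors and corresponds to $\si_1\si_2$; consequently the outer weight of the extracted sum is $\si_2^{\,n-1}$, where $\si_2$ is the product of the types of $A$ and $B$.

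First I would substitute the values of $N\b_n(p)$ and $\bar N\b_{n+1}(p)$ into the first two sums on the left of Theorem \ref{Th.linear.sum2}. In each case these split into a harmonic part carrying the index $n$ and a constant part proportional to $\t(p;\si_1)$; because both $N\b_n(p)$ and $\bar N\b_{n+1}(p)$ contain the \emph{same} harmonic number $h_n^{(p)}(\si_1)$, the two sums combine into a single multiple of $\sum_n a_n h_n^{(p)}(\si_1)/n^q=\pm\,\S_{p,q}^{\si_1,\si_2}$. I expect the coefficient, once $\de_{\si_1}^{p+q}=1-\si_1(-1)^{p+q}$ is used, to be exactly $\si_2(-1)^p\de_{\si_1}^{p+q}$, the factor standing on the left of (\ref{lin.Stsum}), while the surviving constant parts, summed against $n^{-q}$, contribute the term $-(1+(-1)^q)\t(p;\si_1)\ze(q;\si_2)$.

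Next I would reduce the remaining structures. The double sum on the left of Theorem \ref{Th.linear.sum2} simplifies because $S\a_n(k+1)$ is, up to the factor $(-1)^n$ present only when $A=A_2$, independent of $n$ and equal to a single $\t$-value ($S^{(A_1)}_n(j)=(1+(-1)^j)\t(j)$ and $S^{(A_2)}_n(j)=(-1)^n(1-(-1)^j)\t(\bar j)$); pulling it out, the inner sum over the half-integer denominators collapses to a $\t$-value, and after reindexing $k+1\mapsto k$ the whole double sum becomes $\sum_{k=1}^p\de_{\si_1\si_2}^{k-1}\binom{p+q-k-1}{q-1}\t(p+q-k;\si_2)\t(k;\si_1\si_2)$. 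On the right of Theorem \ref{Th.linear.sum2}, inserting $D\a(2j)=\ze(2j;\si_1\si_2)$ and the values of $\hat{t}\b$ turns the $j$-sum into $2\sum_j\binom{p+q-2j-1}{p-1}\t(p+q-2j;\si_1)\ze(2j;\si_1\si_2)$ and the boundary term (with $a_0=1$ in both cases) into $-\binom{p+q-1}{p-1}\t(p+q;\si_1)$.

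The hard part will be the sign and parity bookkeeping, exactly as for Theorem \ref{Th.lin.Tsum} but now with the integer-denominator base function $r_2(s)=s^{-q}$. In particular the evaluation $\hat{t}^{(A_2)}(m)=-\t(\bar m)+\de_{m,1}\ln(2)$ and the $A_2$-forms of $N$ and $\bar N$ carry stray minus signs and $\de_{m,1}\ln(2)$ corrections that must be reconciled with the conventions $\ze(1):=0$ and $\t(1):=2\ln(2)$; I would have to verify that after this reconciliation the coefficient of the extracted $\S$-sum is uniformly $\si_2(-1)^p\de_{\si_1}^{p+q}$ and that the four individual closed forms for $\S_{p,q}$, $\S_{\bar p,q}$, $\S_{p,\bar q}$ and $\S_{\bar p,\bar q}$ coincide with the specializations of (\ref{lin.Stsum}). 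Once these parity checks are complete, the theorem follows.
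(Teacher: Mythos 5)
Your proposal is correct and is essentially the paper's own proof: the paper obtains Theorem \ref{Th.lin.Stsum} precisely by specializing Theorem \ref{Th.linear.sum2} to the four pairs $(A,B)\in\{A_1,A_2\}^2$, substituting the tabulated evaluations of $N\b_n(p)$, $\bar{N}\b_{n+1}(p)$, $S\a_n(k+1)$, $D\a(2j)$ and $\hat{t}\b$ from Section \ref{Sec.Notation.Exp}, and merging the four resulting identities into (\ref{lin.Stsum}), exactly as in the proof of Theorem \ref{Th.lin.Tsum}. Your dictionary ($B\leftrightarrow\si_1$, $A\leftrightarrow\si_1\si_2$, hence outer weight $\si_2^{\,n-1}$), the combination of the first two sums via the common $h_n^{(p)}(\si_1)$, and the collapse of the remaining terms all agree with what the paper does implicitly, including the sign and $\de_{m,1}\ln(2)$ reconciliation you flag.
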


It can be verified that when $q=1$, the expression given in Theorem \ref{Th.lin.Stsum} also holds for the alternating sums $\S_{p,\bar{1}}$ and $\S_{\bar{p},\bar{1}}$. Moreover, we have

\begin{corollary}\label{Coro.Stpq.odd}
For any odd weight $w=p+q$, the linear Euler $\S$-sums $\S_{p,q}$ are reducible to $\pi$ and zeta values $\ze(3),\ze(5),\ldots$, while the linear Euler $\S$-sums $\S_{p,\bar{q}}$ are reducible to $\pi$, $G$, zeta values $\ze(3),\ze(5),\ldots$, and the Dirichlet beta values $\be(4),\be(6),\ldots$.
\end{corollary}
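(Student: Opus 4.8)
The plan is to read off both assertions directly from the master formula (\ref{lin.Stsum}) of Theorem~\ref{Th.lin.Stsum} by specializing the signs and then classifying every term on the right-hand side according to the parity of its argument. For the sums $\S_{p,q}$ I would take $\si_1=\si_2=1$, and for $\S_{p,\bar q}$ I would take $\si_1=1$, $\si_2=-1$. In both cases $\si_1=1$, so the prefactor $\de_{\si_1}^{p+q}=1-(-1)^{p+q}=1-(-1)^{w}$ equals $2$ when $w$ is odd; hence (\ref{lin.Stsum}) can be solved for the desired $\S$-sum with a nonzero coefficient, expressing it as a finite rational combination of products of the single values $\ze$ and $\t$ appearing on the right. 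The boundary case $q=1$, which is relevant only for $\S_{p,\bar 1}$, is covered by the extension noted after Theorem~\ref{Th.lin.Stsum}.

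First I would record the elementary reduction rules that govern each factor. Since $\ze(\bar s)=(2^{1-s}-1)\ze(s)$ and $\t(s)=(2^s-1)\ze(s)$, each of $\ze(s;\pm1)$ and $\t(s;1)$ is a rational multiple of $\pi^s$ for even $s$ and a rational multiple of the odd zeta value $\ze(s)$ for odd $s\geq3$; and from (\ref{AZV}), $\t(\bar s)=-2^s\be(s)$ is a rational multiple of $\pi^s$ for odd $s$ and reduces to $G$ (when $s=2$) or to $\be(4),\be(6),\ldots$ (when $s\geq4$ is even). Together with the parity selector $\de_{\si}^{k}=1-\si(-1)^{k}$, which is $0$ or $2$ according to the parity of $k$, these dictionaries are the only ingredients needed.

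Next I would scan the four groups of terms in (\ref{lin.Stsum}) for each sign choice, using that $w$ is odd throughout. For $\S_{p,q}$ the factor $\de_{\si_1\si_2}^{k-1}=\de_1^{k-1}$ restricts the middle sum to even $k$, pairing $\t(w-k)$ (odd argument $\geq3$, an odd zeta value) with $\t(k)$ (even argument, a $\pi$-power); the final sum pairs $\t(w-2j)$ with $\ze(2j)$, again a $\pi$-power times an odd zeta value; and the term $-\binom{w-1}{p-1}\t(w)$ is an odd zeta value. Hence $\S_{p,q}$ reduces to $\pi$ and $\ze(3),\ze(5),\ldots$. For $\S_{p,\bar q}$ the factor $\de_{-1}^{k-1}$ instead selects odd $k$, so the middle sum pairs $\t(\overline{w-k})$ (even argument, giving $G$ or $\be(4),\be(6),\ldots$) with $\t(\bar k)$ (odd argument, a $\pi$-power), while the remaining terms contribute $\pi$-powers and odd zeta values as before; this produces exactly $\pi$, $G$, the odd zeta values, and $\be(4),\be(6),\ldots$.

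The one delicate point, and the step I expect to be the main obstacle, is the ostensible appearance of $\ln(2)$. The value $\t(1;1)=2\ln(2)$ enters in precisely two places, both only when $p=1$ and $q$ is even: the isolated term $-(1+(-1)^q)\t(p;\si_1)\ze(q;\si_2)$ through $\t(1)$, and the boundary index $j=q/2$ of the final sum through $\t(w-2j)=\t(1)$. I would compute that, because $\si_1=1$ forces $\si_2=\si_1\si_2$, these two contributions are $-4\ln(2)\,\ze(q;\si_2)$ and $+4\ln(2)\,\ze(q;\si_1\si_2)$ with identical $\ze$-factors, so they cancel identically. Retaining this boundary term rather than discarding it is the crux: its cancellation is exactly what keeps $\S_{p,q}$ free of $\ln(2)$ and confines $\S_{p,\bar q}$ to the stated constants.
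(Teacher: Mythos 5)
Your proposal is correct and follows essentially the same route as the paper: both specialize the master identity (\ref{lin.Stsum}) of Theorem \ref{Th.lin.Stsum} to the sign choices $(\si_1,\si_2)=(1,1)$ and $(1,-1)$, classify each term by the parity of its argument, and observe that the only potential $\ln(2)$ contributions --- the $\t(1)$ terms arising when $p=1$ and $q$ is even --- cancel identically. Your write-up simply makes explicit the cancellation that the paper's proof states in one line (``the combinations of the terms on $\t(1)$ and $\ze(\bar{1})$ \ldots vanish''), including the correct coefficients $-4\ln(2)\,\ze(q;\si_2)$ and $+4\ln(2)\,\ze(q;\si_1\si_2)$.
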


\begin{proof}
This corollary gives two specializations of Eq. (\ref{lin.Stsum}). Note that when the weight $w=p+q$ is odd, the combinations of the terms on $\t(1)$ and $\ze(\bar{1})$ in the expressions of $\S_{p,q}$ and $\S_{p,\bar{q}}$ are vanish. Therefore, the evaluations of $\S_{p,q}$ and $\S_{p,\bar{q}}$ do not involve $\ln(2)$.
\end{proof}

\begin{example}
For linear sums $\S_{p,q}$, we have
\begin{align*}
&\S_{1,2}=\tf{7}{2}\ze(3)\,,\\
&\S_{1,4}=\tf{31}{2}\ze(5)-\tf{7}{6}\pi^2\ze(3)\,,\\
&\S_{2,3}=-62\ze(5)+\tf{35}{6}\pi^2\ze(3)\,,\\
&\S_{3,2}=93\ze(5)-7\pi^2\ze(3)\,,\\
&\S_{1,6}=\tf{127}{2}\ze(7)-\tf{31}{6}\pi^2\ze(5)-\tf{7}{90}\pi^4\ze(3)\,,\\
&\S_{2,5}=-381\ze(7)+\tf{217}{6}\pi^2\ze(5)+\tf{7}{45}\pi^4\ze(3)\,,\\
&\S_{3,4}=\tf{1905}{2}\ze(7)-93\pi^2\ze(5)\,,\\
&\S_{4,3}=-1270\ze(7)+\tf{341}{3}\pi^2\ze(5)+\tf{7}{6}\pi^4\ze(3)\,,\\
&\S_{5,2}=\tf{1905}{2}\ze(7)-62\pi^2\ze(5)-\tf{7}{3}\pi^4\ze(3)\,.
\end{align*}
Note that the evaluation of $\S_{1,2}$ is given in, e.g., \cite[Chapter 9, Entry 12]{Berndt85.1}, \cite[p. 1198]{BorBor95.OAI}, \cite[Eq. (3.2)]{Chu97.HSR} and \cite[Eq. (15)]{DeDo91}, and the evaluation of $\S_{1,4}$ can be found in \cite[Eq. (3.7a)]{ZhengDY07}.
\hfill\qedsymbol
\end{example}

\begin{example}
Similarly, for linear sums $\S_{p,\bar{q}}$, we have
\begin{align*}
&\S_{1,\bar{2}}=-\tf{7}{2}\ze(3)+2\pi G\,,\\
&\S_{2,\bar{1}}=7\ze(3)-2\pi G\,,\\
&\S_{1,\bar{4}}=-\tf{31}{2}\ze(5)+8\pi\be(4)-\tf{7}{12}\pi^2\ze(3)\,,\\
&\S_{2,\bar{3}}=62\ze(5)-24\pi\be(4)+\tf{7}{6}\pi^2\ze(3)\,,\\
&\S_{3,\bar{2}}=-93\ze(5)+24\pi\be(4)+\pi^3G\,,\\
&\S_{4,\bar{1}}=62\ze(5)-8\pi\be(4)-\pi^3G\,,\\
&\S_{1,\bar{6}}=-\tf{127}{2}\ze(7)+32\pi\be(6)-\tf{31}{12}\pi^2\ze(5)
    -\tf{49}{720}\pi^4\ze(3)\,,\\
&\S_{2,\bar{5}}=381\ze(7)-160\pi\be(6)+\tf{31}{3}\pi^2\ze(5)+\tf{49}{360}\pi^4\ze(3)\,,\\
&\S_{3,\bar{4}}=-\tf{1905}{2}\ze(7)+320\pi\be(6)-\tf{31}{2}\pi^2\ze(5)+4\pi^3\be(4)\,,\\
&\S_{4,\bar{3}}=1270\ze(7)-320\pi\be(6)+\tf{31}{3}\pi^2\ze(5)-12\pi^3\be(4)\,,\\
&\S_{5,\bar{2}}=-\tf{1905}{2}\ze(7)+160\pi\be(6)+12\pi^3\be(4)+\tf{5}{12}\pi^5G\,,\\
&\S_{6,\bar{1}}=381\ze(7)-32\pi\be(6)-4\pi^3\be(4)-\tf{5}{12}\pi^5G\,.
\end{align*}
The evaluation of $\S_{1,\bar{2}}$ is presented in \cite[Chapter 9, Entry 21]{Berndt85.1}, \cite[Eq. (28)]{Chen06} and \cite[Eq. (19)]{DeDo91} in slightly different forms, and the evaluation of $\S_{2,\bar{1}}$ is given in \cite[Example 6]{CopCan15}.
\hfill\qedsymbol
\end{example}

Furthermore, by transformation, we have
\begin{align*}
\S_{p,q}&=\sum_{n=1}^{\infty}\frac{h_n^{(p)}}{n^q}
    =2^{p+q}\sum_{n=1}^{\infty}\frac{H_{2n}^{(p)}}{(2n)^q}
        -\sum_{n=1}^{\infty}\frac{H_n^{(p)}}{n^q}
    =2^{p+q}\sum_{n=1}^{\infty}\frac{1+(-1)^n}{2}\frac{H_n^{(p)}}{n^q}
        -\sum_{n=1}^{\infty}\frac{H_n^{(p)}}{n^q}\\
&=(2^{p+q-1}-1)S_{p,q}-2^{p+q-1}S_{p,\bar{q}}\,.
\end{align*}
According to Flajolet and Salvy's results \cite[Theorems 7.1 and 7.2]{FlSa98} (see also \cite[Corollary 5.1]{XuWang19.EFES}), the linear Euler $\S$-sums $\S_{p,q}$ are expressible in terms of (alternating) zeta values and double zeta values. In particular, the following expression holds:
\begin{equation}\label{St.pq}
\S_{p,q}=(2^{p+q-1}-1)\{\ze(q,p)+\ze(p+q)\}
    +2^{p+q-1}\{\ze(\bar{q},p)+\ze(\ol{p+q})\}\,.
\end{equation}

\begin{example}
Specifying the parameters in (\ref{St.pq}) yields
\begin{align*}
&\S_{1,3}=16\Lii_4(\tf{1}{2})+14\ln(2)\ze(3)
    +\tf{2}{3}\ln(2)^4-\tf{2}{3}\pi^2\ln(2)^2-\tf{53}{360}\pi^4\,,\\
&\S_{2,2}=-32\Lii_4(\tf{1}{2})-28\ln(2)\ze(3)
    -\tf{4}{3}\ln(2)^4+\tf{4}{3}\pi^2\ln(2)^2+\tf{151}{360}\pi^4\,,\\
&\S_{1,5}=32\ze(\bar{5},1)-\tf{31}{2}\ze(3)^2+\tf{31}{1260}\pi^6\,,\\
&\S_{2,4}=-128\ze(\bar{5},1)+55\ze(3)^2-\tf{47}{630}\pi^6\,,\\
&\S_{3,3}=192\ze(\bar{5},1)-\tf{95}{2}\ze(3)^2+\tf{8}{105}\pi^6\,,\\
&\S_{4,2}=-128\ze(\bar{5},1)-\ze(3)^2+\tf{41}{1260}\pi^6\,.
\end{align*}
These sums can not be obtained from Theorem \ref{Th.lin.Stsum}. Note that
\[
G(1)=\sum_{n=1}^{\infty}\frac{O_n}{(2n)^3}=\frac{1}{16}\S_{1,3}
\]
is related to an incorrect formula claimed by Ramanujan. The readers are referred to Berndt's remark \cite[p. 257]{Berndt85.1} for the history of this constant, and to the papers due to Sitaramachandra Rao \cite{Sit87}, and Coppo and Candelpergher \cite[Section 6]{CopCan15} for more details and identities on it.\hfill\qedsymbol
\end{example}

Contrary to Corollary \ref{Coro.Stpq.odd}, for even weights, Theorem \ref{Th.lin.Stsum} yields the next result.

\begin{corollary}\label{Coro.Stpq.even}
For any even weight $w=p+q$, the linear Euler $\S$-sums $\S_{\bar{p},q}$ are reducible to $\pi$, $G$, zeta values $\ze(3),\ze(5),\ldots$, and the Dirichlet beta values $\be(4),\be(6),\ldots$, while the linear Euler $\S$-sums $\S_{\bar{p},\bar{q}}$ are reducible to $\pi$, $G$, and the Dirichlet beta values $\be(4),\be(6),\ldots$.
\end{corollary}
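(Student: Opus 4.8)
The plan is to read off both assertions as the two sign-specializations $(\si_1,\si_2)=(-1,1)$ and $(\si_1,\si_2)=(-1,-1)$ of the master formula (\ref{lin.Stsum}) in Theorem~\ref{Th.lin.Stsum}, exactly in the spirit of the proof of Corollary~\ref{Coro.Stpq.odd}, and then to track the parity of every argument on the right-hand side. First I would check that the scalar prefactor $\si_2(-1)^p\de_{\si_1}^{p+q}$ does not vanish: with $\si_1=-1$ it equals $\si_2(-1)^p(1+(-1)^{p+q})$, which for even weight $w=p+q$ reduces to $2\si_2(-1)^p=\pm2\neq0$, so in both cases (\ref{lin.Stsum}) can be solved for $\S_{\bar p,q}$ and $\S_{\bar p,\bar q}$. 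Throughout I would use that even weight forces $p\equiv q\pmod{2}$, together with the elementary single-value reductions $\ze(\bar s)=(2^{1-s}-1)\ze(s)$, $\t(s)=(2^s-1)\ze(s)$ and $\t(\bar s)=-2^s\be(s)$, and the facts that $\ze(2m)$ and $\be(2m-1)$ are rational multiples of powers of $\pi$, while the even beta values $\be(2m)$ range over $G=\be(2),\be(4),\be(6),\dots$.

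The heart of the argument is a term-by-term parity count. In the factor $-(1+(-1)^q)\t(p;\si_1)\ze(q;\si_2)$ the prefactor $1+(-1)^q$ kills the odd-$q$ case, while for even $q$ (hence even $p$) both $\t(\bar p)=-2^p\be(p)$ and $\ze(q;\si_2)$ involve only an even beta value and a power of $\pi$. The term $-\binom{p+q-1}{p-1}\t(\overline{p+q})=\binom{p+q-1}{p-1}2^{p+q}\be(p+q)$ contributes only the even beta value $\be(p+q)$, and the final sum over $j$ pairs $\t(\overline{p+q-2j})$ (an even beta value) with $\ze(2j;\si_1\si_2)$ (a power of $\pi$). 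The decisive term is the one carrying $\de_{\si_1\si_2}^{k-1}$: for $\S_{\bar p,\bar q}$ one has $\si_1\si_2=1$ and $\de_1^{k-1}=1+(-1)^k$ selects the even $k$, so that $\t(\overline{p+q-k})$ is an even beta value and $\t(k)$ a power of $\pi$; whereas for $\S_{\bar p,q}$ one has $\si_1\si_2=-1$ and $\de_{-1}^{k-1}=1-(-1)^k$ selects the odd $k$, so that $\t(p+q-k)$ now has odd argument and introduces the odd zeta values $\ze(3),\ze(5),\dots$, while $\t(\bar k)=-2^k\be(k)$ with $k$ odd is again a power of $\pi$. Collecting the surviving constants gives precisely the two reduction sets claimed.

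I would then record the one remaining point, that no $\ln(2)$ survives, which here is even simpler than in the odd-weight Corollary~\ref{Coro.Stpq.odd}. A logarithm can enter only through $\ze(\bar1)=-\ln(2)$ or $\t(1)=2\ln(2)$. No single zeta of argument $1$ occurs, since the $\ze$-arguments appearing are $q\ge2$ and $2j\ge2$; and $\t(1)$ would require a factor $\t(m;+1)=\t(m)$ of argument $m=1$, but every $\t$-factor carrying the sign $+1$ occurs only in the $\de_{\si_1\si_2}^{k-1}$-term, with argument forced to be at least $2$ (by $p+q-k\ge q\ge2$, or by the selection of even $k\ge2$), all remaining $\t$-factors carrying the sign $-1$ and hence equaling some $-2^{(\cdot)}\be(\cdot)$. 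Thus neither $\ln(2)$-source appears.

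The main obstacle is purely the bookkeeping in the middle paragraph: one must keep straight, across the four subcases ($p,q$ both even or both odd, crossed with the two sign choices), which parity of $k$ is selected by the $\de$-factor and hence whether the resulting $\t$- and $\ze$-arguments are even or odd. Once that parity ledger is set up correctly, the reduction of each surviving factor to $\pi$, $G$, an odd zeta value, or an even Dirichlet beta value is immediate, and both assertions of the corollary follow.
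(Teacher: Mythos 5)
Your proposal is correct and follows exactly the route the paper intends: the paper states this corollary as a direct specialization of Theorem~\ref{Th.lin.Stsum} (just as Corollary~\ref{Coro.Stpq.odd} is), and your sign choices $(\si_1,\si_2)=(-1,\pm1)$, the parity ledger for the $\de_{\si_1\si_2}^{k-1}$-term, and the verification that neither $\ze(\bar 1)$ nor $\t(1)$ can enter are precisely the bookkeeping that derivation requires. The only point to add is that for sums with $q=1$ (e.g.\ $\S_{\bar{1},\bar{1}}$, $\S_{\bar{3},\bar{1}}$, which have even weight) you need the remark following Theorem~\ref{Th.lin.Stsum} that formula (\ref{lin.Stsum}) remains valid for $\S_{\bar{p},\bar{1}}$, after which your parity analysis applies verbatim since the term containing $\ze(q;\si_2)$ is killed by the factor $1+(-1)^q$.
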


\begin{example}
For linear sums $\S_{\bar{p},q}$, we have
\begin{align*}
&\S_{\bar{1},3}=-8\be(4)+\tf{7}{2}\pi\ze(3)-\tf{1}{3}\pi^2G\,,\\
&\S_{\bar{2},2}=24\be(4)-7\pi\ze(3)+\pi^2G\,,\\
&\S_{\bar{1},5}=-32\be(6)+\tf{31}{2}\pi\ze(5)-\tf{4}{3}\pi^2\be(4)-\tf{7}{180}\pi^4G\,,\\
&\S_{\bar{2},4}=160\be(6)-62\pi\ze(5)+4\pi^2\be(4)+\tf{1}{12}\pi^4G\,,\\
&\S_{\bar{3},3}=-320\be(6)+93\pi\ze(5)-4\pi^2\be(4)+\tf{7}{4}\pi^3\ze(3)\,,\\
&\S_{\bar{4},2}=320\be(6)-62\pi\ze(5)+4\pi^2\be(4)-\tf{7}{2}\pi^3\ze(3)\,.
\end{align*}
\end{example}

\begin{example}
For linear sums $\S_{\bar{p},\bar{q}}$, we have
\begin{align*}
&\S_{\bar{1},\bar{1}}=2G\,,\\
&\S_{\bar{1},\bar{3}}=8\be(4)-\tf{2}{3}\pi^2G\,,\\
&\S_{\bar{2},\bar{2}}=-24\be(4)+3\pi^2G\,,\\
&\S_{\bar{3},\bar{1}}=24\be(4)-2\pi^2G\,,\\
&\S_{\bar{1},\bar{5}}=32\be(6)-\tf{8}{3}\pi^2\be(4)-\tf{2}{45}\pi^4G\,,\\
&\S_{\bar{2},\bar{4}}=-160\be(6)+16\pi^2\be(4)+\tf{1}{12}\pi^4G\,,\\
&\S_{\bar{3},\bar{3}}=320\be(6)-32\pi^2\be(4)\,,\\
&\S_{\bar{4},\bar{2}}=-320\be(6)+28\pi^2\be(4)+\tf{2}{3}\pi^4G\,,\\
&\S_{\bar{5},\bar{1}}=160\be(6)-8\pi^2\be(4)-\tf{2}{3}\pi^4G\,.
\end{align*}
Some linear $\S$-sums, which can not be deduced from Theorem \ref{Th.lin.Stsum}, will be evaluated in Section \ref{Sec.linES.CMZV} by colored multiple zeta values.\hfill\qedsymbol
\end{example}

Now, let us give the parity theorem of the Kaneko-Tsumura double $T$-values.

\begin{theorem}
When the weight $w=p+q$ and the quantity $\frac{1+\si_2}{2}$ have the same parity, the double $T$-values $T(p,q;\si_1,\si_2)$ are reducible to combinations of $\pi$, $G$, zeta values $\ze(3),\ze(5),\ldots$, and the Dirichlet beta values $\be(4),\be(6),\ldots$.
\end{theorem}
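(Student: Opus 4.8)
The plan is to mirror the argument used for the Hoffman double $t$-values: I would first reduce the Kaneko--Tsumura double $T$-value to a single linear Euler $\S$-sum of the same weight, and then read off the reducibility from the parity classification in Corollaries~\ref{Coro.Stpq.odd} and \ref{Coro.Stpq.even}. Concretely, starting from the defining series
\[
T(p,q;\si_1,\si_2)=4\sum_{n_1>n_2\geq1}\frac{\si_1^{n_1}\si_2^{n_2}}{(2n_1-2)^p(2n_2-1)^q}\,,
\]
I would factor $(2n_1-2)^p=2^p(n_1-1)^p$ and $(2n_2-1)^q=2^q(n_2-\tf{1}{2})^q$, pull out $2^{2-p-q}$, and then shift the outer index by setting $m=n_1-1\geq1$. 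Since $\si_1^{m+1}=\si_1^{m-1}$ and the inner sum becomes $\sum_{n_2=1}^{m}\si_2^{n_2}/(n_2-\tf{1}{2})^q=\si_2\,h_m^{(q)}(\si_2)$, this yields the transformation formula
\[
T(p,q;\si_1,\si_2)=2^{2-p-q}\si_2\,\S_{q,p}^{\si_2,\si_1}\,,
\]
which is the analogue for $T$-values of Eq.~(\ref{Tpq.DMtV}).

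With this identity in hand, the statement follows from the two corollaries. The right-hand side is a fixed nonzero rational multiple (times the sign $\si_2$) of the linear $\S$-sum $\S_{q,p}^{\si_2,\si_1}$, whose weight is again $w=p+q$ and whose \emph{inner-harmonic sign} is $\si_2$. Corollary~\ref{Coro.Stpq.odd} shows that, for odd weight, the $\S$-sums with inner sign $+1$ (namely $\S_{p,q}$ and $\S_{p,\bar q}$) reduce to $\pi$, $G$, odd zeta values and even Dirichlet beta values; Corollary~\ref{Coro.Stpq.even} shows that, for even weight, the $\S$-sums with inner sign $-1$ (namely $\S_{\bar p,q}$ and $\S_{\bar p,\bar q}$) reduce to the same class of constants. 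Hence $\S_{q,p}^{\si_2,\si_1}$ is reducible exactly when either $\si_2=+1$ with $w$ odd, or $\si_2=-1$ with $w$ even; equivalently, exactly when $w$ and $\tf{1+\si_2}{2}$ have the same parity. Since multiplication by $2^{2-p-q}\si_2$ preserves this reducibility class, the theorem follows.

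The step that requires the most care is the sign-and-parity bookkeeping in the transformation formula. The key point is that under the reindexing the roles of the two signs are exchanged: the sign $\si_2$ attached to the \emph{smaller} summation index in the $T$-value becomes the sign of the \emph{inner} odd harmonic number of the associated $\S$-sum, and it is precisely this inner sign that governs reducibility in Corollaries~\ref{Coro.Stpq.odd} and \ref{Coro.Stpq.even}. Tracking this correctly is what produces the condition on $\tf{1+\si_2}{2}$ (rather than on $\tf{1+\si_1}{2}$) in the statement. I would also note that the admissibility constraints ($q\geq1$ as the inner order and $p\geq2$ as the outer exponent of $\S_{q,p}^{\si_2,\si_1}$) match the hypotheses of Theorem~\ref{Th.lin.Stsum} and its corollaries, with the boundary cases handled as in the remark following Theorem~\ref{Th.lin.Stsum}.
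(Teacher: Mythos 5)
Your proposal is correct and takes essentially the same route as the paper: your transformation formula $T(p,q;\si_1,\si_2)=2^{2-p-q}\si_2\,\S_{q,p}^{\si_2,\si_1}$ is exactly the paper's Eq.~(\ref{Stpq.DMTV}) read in reverse (the paper derives $\S_{p,q}^{\si_1,\si_2}=2^{p+q-2}\si_1T(q,p;\si_2,\si_1)$ by reindexing the $\S$-sum rather than the $T$-value), and both arguments then conclude by the same appeal to Corollaries~\ref{Coro.Stpq.odd} and \ref{Coro.Stpq.even}. Your observation that reducibility is governed by the inner sign $\si_2$ of $\S_{q,p}^{\si_2,\si_1}$, which is what produces the condition on $\tfrac{1+\si_2}{2}$, is also the correct reading of those corollaries.
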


\begin{proof}
It follows from
\begin{align}
\S_{p,q}^{\si_1,\si_2}
    &=\sum_{n=1}^\infty\si_2^{n-1}\frac{h_n^{(p)}(\si_1)}{n^q}
        =\sum_{n=1}^\infty\frac{\si_2^{n-1}}{n^q}
            \sum_{k=1}^n\frac{\si_1^{k-1}}{(k-1/2)^p}\nonumber\\
    &=2^{p+q-2}\si_1\cdot2^2\sum_{n>k\geq 1}\frac{\si_2^n\si_1^k}{(2n-2)^q(2k-1)^p}
        =2^{p+q-2}\si_1T(q,p;\si_2,\si_1)\label{Stpq.DMTV}
\end{align}
and Corollaries \ref{Coro.Stpq.odd} and \ref{Coro.Stpq.even}.
\end{proof}


\subsection{Linear Euler sums and colored MZVs}\label{Sec.linES.CMZV}

Now, define
\[
\Li_{s_1,s_2,\ldots,s_k}(\si_1,\si_2,\ldots,\si_k)
    =\sum_{n_1>n_2>\cdots>n_k\geq 1}
        \frac{\si_1^{n_1}\si_2^{n_2}\cdots\si_k^{n_k}}
            {n_1^{s_1}n_2^{s_2}\cdots n_k^{s_k}}\,,
\]
where $\si_j$ are the $N$th roots of unity, and $s_j$ are positive integers, for $j=1,2,\ldots,k$, with $(s_1,\si_1)\neq(1,1)$. They are in fact \emph{multiple polylogarithm values} at roots of unity, also called \emph{colored multiple zeta values} (CMZVs) of \emph{level} $N$ (see, for example, \cite{BJOP2002,Zhao08.MPV}). Since $\si_j=\om^{r_j}$ for $r_j=0,1,\ldots,N-1$ and $\om=\exp(2\pi\ui/N)$, for convenience, we use the auxiliary notation
\[
L_N(s_1,s_2,\ldots,s_k;r_1,r_2,\ldots,r_k)
    =\Li_{s_1,s_2,\ldots,s_k}(\om^{r_1},\om^{r_2},\ldots,\om^{r_k})
\]
to denote CMZVs of lever $N$. Then, the following theorem can be deduced.

\begin{theorem}\label{Th.TS.CMZV}
The linear Euler $T$-sums $T_{p,q}^{\si_1,\si_2}$ and $\S$-sums $\S_{p,q}^{\si_1,\si_2}$ are expressible in terms of CMZVs of weight $p+q$ and of level four:
\[
T_{p,q}^{\si_1,\si_2}=2^{p+q-2}\si_1^{\frac{3}{2}}\si_2^{\frac{3}{2}}
    \bibb{
        \begin{array}{c}
            L_4(q,p;\la_{\si_2},\la_{\si_1})-L_4(q,p;\la_{\si_2},\la_{\si_1}+2)\\
            -L_4(q,p;\la_{\si_2}+2,\la_{\si_1})+L_4(q,p;\la_{\si_2}+2,\la_{\si_1}+2)
        \end{array}
    }\,,
\]
and
\[
\S_{p,q}^{\si_1,\si_2}=2^{p+q-2}\si_1^{\frac{3}{2}}\si_2
    \bibb{
        \begin{array}{c}
            L_4(q,p;\la_{\si_2},\la_{\si_1})-L_4(q,p;\la_{\si_2},\la_{\si_1}+2)\\
            +L_4(q,p;\la_{\si_2}+2,\la_{\si_1})-L_4(q,p;\la_{\si_2}+2,\la_{\si_1}+2)
        \end{array}
    }\,,
\]
where $\la_{\si}=0$ if $\si=1$, and $\la_{\si}=1$ if $\si=-1$.
\end{theorem}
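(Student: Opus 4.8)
The plan is to unfold each sum into a double series over lattice points, rescale the half-integer denominators into integer powers, and then read off the resulting constrained double sums as signed combinations of level-four CMZVs by detecting the parity of the summation indices through powers of $\om=\ui$ (so that $\om^2=-1$).

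First I would expand the odd harmonic numbers. Since $h_m^{(p)}(\si_1)=\sum_{k=1}^m\si_1^{k-1}/(k-\tf{1}{2})^p$, the definitions (\ref{Tsum.Unify}) and (\ref{Stsum.Unify}) give
\[
T_{p,q}^{\si_1,\si_2}=2^{p+q}\sum_{n>k\geq1}\frac{\si_2^{n-1}\si_1^{k-1}}{(2n-1)^q(2k-1)^p}
\quad\text{and}\quad
\S_{p,q}^{\si_1,\si_2}=2^{p+q}\sum_{n\geq k\geq1}\frac{\si_2^{n-1}\si_1^{k-1}}{(2n)^q(2k-1)^p}\,.
\]
In the $T$-case I set $m_1=2n-1$, $m_2=2k-1$ (both odd); in the $\S$-case I set $m_1=2n$, $m_2=2k-1$ ($m_1$ even, $m_2$ odd). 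In both cases the inequality on $(n,k)$ is exactly equivalent to $m_1>m_2$, so each sum becomes a sum over $m_1>m_2\geq1$ restricted to the appropriate parity sublattice.

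The key device is to convert the alternating signs into powers of $\om=\ui$ via $\ui^{2n-1}=(-1)^{n-1}\ui$ and $\ui^{2n}=(-1)^n$. With $\la_{\si}$ as in the statement, these yield $\si^{(m-1)/2}=\om^{-\la_{\si}}\om^{\la_{\si}m}$ for odd $m$ and $\si^{m/2-1}=\om^{2\la_{\si}}\om^{\la_{\si}m}$ for even $m$. Substituting and extracting the index-independent phases reduces both summands to the pure monomial $\om^{\la_{\si_2}m_1}\om^{\la_{\si_1}m_2}/(m_1^qm_2^p)$. I then impose the parity restrictions through the indicators $\tf{1}{2}(1-\om^{2m})$ for odd $m$ and $\tf{1}{2}(1+\om^{2m})$ for even $m$; since multiplication by $\om^{2m_i}$ merely shifts the corresponding root index by $2$, expanding the product of the two indicators produces exactly four full CMZVs $L_4(q,p;\cdot,\cdot)$ whose second arguments run over $\la_{\si_j}$ and $\la_{\si_j}+2$. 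The stated $+,-,-,+$ pattern for $T$ and $+,-,+,-$ pattern for $\S$ are precisely the cross-term signs of $(1-\om^{2m_1})(1-\om^{2m_2})$ versus $(1+\om^{2m_1})(1-\om^{2m_2})$.

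Finally I would collect constants: the prefactor $2^{p+q}$ from the rescaling times the $\tf{1}{4}$ from the two indicators yields $2^{p+q-2}$, while the leftover phase is $\om^{-\la_{\si_1}-\la_{\si_2}}$ in the $T$-case and $\om^{2\la_{\si_2}-\la_{\si_1}}$ in the $\S$-case. Matching these with the claimed prefactors is a one-line verification using $\si^{3/2}=\om^{-\la_{\si}}$ (under the branch $(-1)^{3/2}=-\ui$) and $\si=\om^{2\la_{\si}}$. I expect the main obstacle to be exactly this phase bookkeeping: the rescaled outer index is odd for $T$ but even for $\S$, and this single discrepancy is what forces both the different leftover phase (a factor $\si_2^{3/2}$ versus $\si_2$) and the sign flip in the third CMZV term between the two formulas; keeping the branch of the fractional powers consistent with the parity of $m_1$ is the only genuinely delicate point, as everything else is rearrangement of absolutely convergent series.
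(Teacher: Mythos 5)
Your proposal is correct and takes essentially the same route as the paper's proof: unfold the sums into double series over $m_1>m_2\geq1$, rescale the half-integer denominators, encode the alternating signs and the parity constraints via powers of $\om=\ui$ together with the indicators $\tfrac{1}{2}(1\mp\om^{2m})$, and expand the product of indicators into the four level-four CMZVs with the stated sign patterns. The only difference is immaterial bookkeeping of the constant phases (you extract $\om^{-\la_{\si_1}-\la_{\si_2}}$, resp. $\om^{2\la_{\si_2}-\la_{\si_1}}$, directly from $\si^{(m-1)/2}$ and $\si^{m/2-1}$, while the paper first pulls out $\si_1\si_2$ and works with $\si^{(m+1)/2}$), and both reconcile with the prefactors $\si_1^{3/2}\si_2^{3/2}$ and $\si_1^{3/2}\si_2$ under the same branch convention.
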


\begin{proof}
Note that $\si=\exp(\la_{\si}\pi\ui)$ for $\si=\pm1$. Then according to the definitions, we have
\begin{align*}
T_{p,q}^{\si_1,\si_2}
&=\sum_{n>k\geq 1}\frac{\si_2^{n-1}\si_1^{k-1}}{(n-1/2)^q(k-1/2)^p}
    =2^{p+q}\si_1\si_2\sum_{n>k\geq 1}\frac{\si_2^n\si_1^k}{(2n-1)^q(2k-1)^p}\\
&=2^{p+q-2}\si_1\si_2\sum_{m_1>m_2\geq1}
    \frac{\si_2^{(m_1+1)/2}\si_1^{(m_2+1)/2}(1-(-1)^{m_1})(1-(-1)^{m_2})}{m_1^qm_2^p}\\
&=2^{p+q-2}\si_1^{\frac{3}{2}}\si_2^{\frac{3}{2}}
    \sum_{m_1>m_2\geq1}\frac{\om^{\la_{\si_2}m_1}\om^{\la_{\si_1}m_2}
        (1-\om^{2m_1})(1-\om^{2m_2})}{m_1^qm_2^p}\,,
\end{align*}
where $\om=\exp(\pi\ui/2)=\ui$. Expanding the above formula and using the definition of CMZVs of level four, we obtain the expression of linear sums $T_{p,q}^{\si_1,\si_2}$. The expression of linear sums $\S_{p,q}^{\si_1,\si_2}$ can be established in a similar way.
\end{proof}

By using the above expressions and the CMZVs computation function in the Mathematica package \texttt{MultipleZetaValues} developed by Au \cite{Au20.ASD} most recently, we obtain the evaluations of more linear $T$-sums and $\S$-sums, which can not been obtained from the results in Sections \ref{Sec.lin.Tsum} and \ref{Sec.lin.Stsum}.

\begin{example}
The following evaluations of linear Euler $T$-sums hold:
\begin{align*}
&T_{\bar{1},2}=-8\imm(\Li_3(\tf{1}{2}+\tf{\ui}{2}))
    +\tf{1}{4}\pi\ln(2)^2+\tf{3}{16}\pi^3\,,\\
&T_{1,\bar{2}}=-16\imm(\Li_3(\tf{1}{2}+\tf{\ui}{2}))
    +\tf{1}{2}\pi\ln(2)^2+\tf{1}{4}\pi^3\,,\\
&T_{2,\bar{1}}=8\imm(\Li_3(\tf{1}{2}+\tf{\ui}{2}))
    -\tf{1}{4}\pi\ln(2)^2-\tf{3}{16}\pi^3\,,\\
&T_{\bar{1},\bar{3}}=-8\Lii_4(\tf{1}{2})-\tf{1}{3}\ln(2)^4
    +\tf{1}{3}\pi^2\ln(2)^2+\tf{31}{1440}\pi^4\,,\\
&T_{\bar{3},\bar{1}}=8\Lii_4(\tf{1}{2})+\tf{1}{3}\ln(2)^4
    -\tf{1}{3}\pi^2\ln(2)^2-\tf{91}{1440}\pi^4\,.
\end{align*}
\end{example}

\begin{example}
The following evaluations of linear Euler $\S$-sums hold:
\begin{align*}
&\S_{1,\bar{1}}=\tf{1}{8}\pi^2\,,\\
&\S_{1,\bar{3}}=-8\Lii_4(\tf{1}{2})-7\ln(2)\ze(3)
    -\tf{1}{3}\ln(2)^4+\tf{1}{3}\pi^2\ln(2)^2+\tf{151}{1440}\pi^4\,,\\
&\S_{2,\bar{2}}=16\Lii_4(\tf{1}{2})+14\ln(2)\ze(3)+8G^2+\tf{2}{3}\ln(2)^4
    -\tf{2}{3}\pi^2\ln(2)^2-\tf{151}{720}\pi^4\,,\\
&\S_{3,\bar{1}}=\tf{1}{8}\pi^4-8G^2\,,
\end{align*}
and
\begin{align*}
&\S_{\bar{1},2}=-8\imm(\Li_3(\tf{1}{2}+\tf{\ui}{2}))-4\ln(2)G
    +\tf{1}{4}\pi\ln(2)^2+\tf{5}{16}\pi^3\,,\\
&\S_{\bar{1},\bar{2}}=16\imm(\Li_3(\tf{1}{2}+\tf{\ui}{2}))+8\ln(2)G
    -\tf{1}{2}\pi\ln(2)^2-\tf{3}{8}\pi^3\,,\\
&\S_{\bar{2},\bar{1}}=-8\imm(\Li_3(\tf{1}{2}+\tf{\ui}{2}))
    -4\ln(2)G+\tf{1}{4}\pi\ln(2)^2+\tf{5}{16}\pi^3\,,
\end{align*}
where the values of $\S_{1,\bar{1}}$ and $\S_{3,\bar{1}}$ are presented in \cite[Example 6]{CopCan15}. See also \cite[Eq. (24)]{Chen06} and \cite[Eq. (4.1b)]{Chu97.HSR} for $\S_{1,\bar{1}}$.\hfill\qedsymbol
\end{example}


\subsection{Linear Euler $R$-sums}\label{Sec.lin.Rsum}

The second named author \cite{X2020.EFS} also considered another variant of the linear Euler sums:
\[
R_{p,q}^{\si_1,\si_2}=\sum_{n=1}^{\infty}\si_2^{n-1}\frac{H_{n-1}^{(p)}(\si_1)}{(n-1/2)^q}\,,
\]
where the numbers $H_n^{(p)}(\si_1)$ represent the classical harmonic numbers $H_n^{(p)}$ for $\si_1=1$ and the alternating harmonic numbers $\bar{H}_n^{(p)}$ for $\si_1=-1$. This kind of Euler sums will be used in the computation of quadratic Euler $\S$-sums in Section \ref{Sec.qua.Stsum}.

The results given in \cite{X2020.EFS} can be reformulated by the following unified identity:
\begin{align}
\si_1\si_2(-1)^p\de_{\si_2}^{p+q}R_{p,q}^{\si_1,\si_2}
    &=-\de_{\si_2}^{q-1}\ze(p;\si_1)\t(q;\si_2)\nonumber\\
&\quad+\sum_{k=0}^p(1+(-1)^k)\binom{p+q-k-1}{q-1}\ze(k;\si_1\si_2)\t(p+q-k;\si_2)\nonumber\\
&\quad+\sum_{k=1}^q\de_{\si_1\si_2}^{k-1}\binom{p+q-k-1}{p-1}
    \t(k;\si_1\si_2)\t(p+q-k;\si_1)\,,\label{Rpq}
\end{align}
where $\ze(0)$ should be interpreted as $-1/2$ wherever it occurs. Moreover, setting $a=-1/2$ in \cite[Eq. (1.8)]{Xu17.SEP} yields
\begin{equation}\label{R1q}
R_{1,q}=\frac{q}{2}\t(q+1)-\frac{1}{2}\sum_{k=0}^{q-1}\t(q-k)\t(k+1)\,.
\end{equation}
Similarly to Theorem \ref{Th.TS.CMZV}, based on the CMZVs, we obtain the next result.

\begin{theorem}\label{Th.Rpq.CMZV}
The linear Euler $R$-sums $R_{p,q}^{\si_1,\si_2}$ are expressible in terms of CMZVs of weight $p+q$ and of level four:
\[
R_{p,q}^{\si_1,\si_2}=2^{p+q-2}\si_1\si_2^{\frac{3}{2}}
    \bibb{
        \begin{array}{c}
            L_4(q,p;\la_{\si_2},\la_{\si_1})+L_4(q,p;\la_{\si_2},\la_{\si_1}+2)\\
            -L_4(q,p;\la_{\si_2}+2,\la_{\si_1})-L_4(q,p;\la_{\si_2}+2,\la_{\si_1}+2)
        \end{array}
    }\,,
\]
where $\la_{\si}=0$ if $\si=1$, and $\la_{\si}=1$ if $\si=-1$.
\end{theorem}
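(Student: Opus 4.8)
The plan is to follow exactly the route used in the proof of Theorem~\ref{Th.TS.CMZV}, rewriting the defining series as an iterated double sum and then re-indexing so that both summation variables run over the positive integers with suitable level-four weights. First I would expand the inner harmonic number, $H_{n-1}^{(p)}(\si_1)=\sum_{k=1}^{n-1}\si_1^{k-1}/k^p$, to obtain
\[
R_{p,q}^{\si_1,\si_2}=\sum_{n>k\geq1}\frac{\si_2^{n-1}\si_1^{k-1}}{(n-1/2)^q\,k^p}\,.
\]
As before, the factor $(n-1/2)^q$ is scaled to $(2n-1)^q/2^q$; the essential new feature is that the inner denominator $k^p$ carries an \emph{integer} argument. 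I would therefore write $k^p=(2k)^p/2^p$, which produces the overall constant $2^{p+q}$ and suggests the substitution $m_1=2n-1$ (odd) together with $m_2=2k$ (even), rather than the naive $m_2=k$.

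The crucial step is to verify that this substitution converts the nesting condition exactly into the CMZV condition $m_1>m_2$. With $m_1$ odd and $m_2$ even one has $n>k\iff 2n-1>2k\iff m_1>m_2$, and $(n,k)\mapsto(m_1,m_2)$ is a bijection onto the pairs of positive integers with $m_1$ odd, $m_2$ even, and $m_1>m_2$. I would emphasize that this is precisely why the inner index must be doubled: had I kept $m_2=k$, the condition $n>k$ would translate into $m_1>2m_2$, which does not match the nested structure of $L_4$. After the substitution the series reads
\[
R_{p,q}^{\si_1,\si_2}=2^{p+q}\sum_{\substack{m_1>m_2\geq1\\m_1\text{ odd},\,m_2\text{ even}}}
    \frac{\si_2^{(m_1-1)/2}\si_1^{(m_2-2)/2}}{m_1^q m_2^p}\,.
\]

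Next I would convert the signs into powers of $\om=\exp(\pi\ui/2)=\ui$ via $\si=\exp(\la_\si\pi\ui)$. Writing $\si_2^{n-1}\si_1^{k-1}=\si_1\si_2\cdot\si_2^n\si_1^k$ (legitimate since $\si_1,\si_2=\pm1$) and using $\si_2^n=\si_2^{1/2}\om^{\la_{\si_2}m_1}$ together with $\si_1^k=\om^{\la_{\si_1}m_2}$, I obtain the prefactor $\si_1\si_2^{3/2}$. Here no leftover half-power of $\si_1$ survives, because $k=m_2/2$ is a genuine integer; this is exactly what distinguishes the present prefactor $\si_1\si_2^{3/2}$ from the $\si_1^{3/2}\si_2^{3/2}$ of the $T$-sum. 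Finally, restoring the parity restrictions by the indicators $\tf{1}{2}(1-(-1)^{m_1})$ and $\tf{1}{2}(1+(-1)^{m_2})$, absorbing the resulting $\tf{1}{4}$ into $2^{p+q-2}$, and using $(-1)^m=\om^{2m}$, the weight factorizes as
\[
\bigl[\om^{\la_{\si_2}m_1}-\om^{(\la_{\si_2}+2)m_1}\bigr]
\bigl[\om^{\la_{\si_1}m_2}+\om^{(\la_{\si_1}+2)m_2}\bigr]\,.
\]
Expanding this product and summing term by term over $m_1>m_2\geq1$ yields exactly the four values $L_4(q,p;\la_{\si_2},\la_{\si_1})$, $L_4(q,p;\la_{\si_2},\la_{\si_1}+2)$, $L_4(q,p;\la_{\si_2}+2,\la_{\si_1})$ and $L_4(q,p;\la_{\si_2}+2,\la_{\si_1}+2)$, with the signs $+,+,-,-$ asserted in the theorem. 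The only genuinely delicate point is the index matching in the second paragraph; everything after it is routine root-of-unity bookkeeping.
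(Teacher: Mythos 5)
Your proposal is correct and follows exactly the route the paper intends: the paper proves Theorem \ref{Th.Rpq.CMZV} by the remark ``similarly to Theorem \ref{Th.TS.CMZV}'', and your computation is precisely that adaptation, with the key adjustments (inner index doubled to $m_2=2k$ so that $n>k\iff m_1>m_2$, the even-parity indicator $\tf{1}{2}(1+(-1)^{m_2})$ producing the sign pattern $+,+,-,-$, and the integer inner argument leaving the prefactor $\si_1\si_2^{3/2}$ rather than $\si_1^{3/2}\si_2^{3/2}$) all handled correctly.
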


\begin{example}
The following evaluations can not be obtained from Eqs. (\ref{Rpq}) and (\ref{R1q}), but can be deduced by Theorem \ref{Th.Rpq.CMZV}:
\begin{align*}
&R_{2,2}=32\Lii_4(\tf{1}{2})+28\ln(2)\ze(3)+\tf{4}{3}\ln(2)^4
    -\tf{4}{3}\pi^2\ln(2)^2-\tf{121}{360}\pi^4\,,\\
&R_{2,4}=128\ze(\bar{5},1)+\ze(3)^2-\tf{1}{210}\pi^6\,,\\
&R_{3,3}=-192\ze(\bar{5},1)+\tf{109}{2}\ze(3)^2-\tf{8}{105}\pi^6\,,\\
&R_{4,2}=128\ze(\bar{5},1)-55\ze(3)^2+\tf{101}{1260}\pi^6\,.
\end{align*}
and
\begin{align*}
&R_{\bar{1},3}=7\ln(2)\ze(3)+8G^2-\tf{1}{8}\pi^4\,,\\
&R_{\bar{2},2}=-16\Lii_4(\tf{1}{2})-14\ln(2)\ze(3)-8G^2-\tf{2}{3}\ln(2)^4
    +\tf{2}{3}\pi^2\ln(2)^2+\tf{181}{720}\pi^4\,,\\
&R_{1,\bar{2}}=8\imm(\Li_3(\tf{1}{2}+\tf{\ui}{2}))
    -4\ln(2)G-\tf{1}{4}\pi\ln(2)^2-\tf{1}{16}\pi^3\,,\\
&R_{2,\bar{1}}=8\imm(\Li_3(\tf{1}{2}+\tf{\ui}{2}))
    +4\ln(2)G-\tf{1}{4}\pi\ln(2)^2-\tf{11}{48}\pi^3\,,\\
&R_{\bar{1},\bar{2}}=8\imm(\Li_3(\tf{1}{2}+\tf{\ui}{2}))
    +8\ln(2)G-\tf{1}{4}\pi\ln(2)^2-\tf{5}{16}\pi^3\,,\\
&R_{\bar{2},\bar{1}}=-16\imm(\Li_3(\tf{1}{2}+\tf{\ui}{2}))
    -8\ln(2)G+\tf{1}{2}\pi\ln(2)^2+\tf{5}{12}\pi^3\,.
\end{align*}
\end{example}

Finally, we give a transformation theorem of the linear $R$-sums.

\begin{theorem}\label{Th.Rpq.MTV}
For $(p,\si_1)\neq (1,1)$ and $(q,\si_2)\neq (1,1)$, the following relation between the linear $R$-sums and the Kaneko-Tsumura double $T$-values holds:
\begin{equation}
R_{p,q}^{\si_1,\si_2}=\si_1\si_2\ze(p;\si_1)\t(q;\si_2)
    -2^{p+q-2}\si_2T(p,q;\si_1,\si_2)\,.
\end{equation}
\end{theorem}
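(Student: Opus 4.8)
The plan is to prove the identity by the elementary ``stuffle'' decomposition of a product of two single series, exactly parallel to the computation (\ref{Stpq.DMTV}) that expressed $\S_{p,q}^{\si_1,\si_2}$ in terms of a double $T$-value. First I would write the two single sums in the convenient normalized forms
\[
\si_1\ze(p;\si_1)=\sum_{k=1}^\infty\frac{\si_1^{k-1}}{k^p}\quad\text{and}\quad
\si_2\t(q;\si_2)=\sum_{n=1}^\infty\frac{\si_2^{n-1}}{(n-1/2)^q}\,,
\]
using $\si^{-1}=\si$; the hypotheses $(p,\si_1)\neq(1,1)$ and $(q,\si_2)\neq(1,1)$ guarantee that both of these series converge, so their product may be expanded as the double sum over $(k,n)\in\mathbb{N}^2$.

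Next I would split this double sum according to whether $k<n$ or $k\geq n$. Since $H_{n-1}^{(p)}(\si_1)=\sum_{k=1}^{n-1}\si_1^{k-1}/k^p$, the range $k<n$ reproduces exactly the linear $R$-sum:
\[
\sum_{n>k\geq 1}\frac{\si_1^{k-1}\si_2^{n-1}}{k^p(n-1/2)^q}=R_{p,q}^{\si_1,\si_2}\,.
\]
Thus the whole task reduces to identifying the complementary range $k\geq n$ with the Kaneko-Tsumura term. (Note that the integer $k$ and the half-integer $n-1/2$ are never equal, so the split is clean and no diagonal correction arises.)

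The key step is to match the $k\geq n$ part to $T(p,q;\si_1,\si_2)$ through an index shift. I would substitute $n_1=k+1$ and $n_2=n$, which is a bijection from $\{k\geq n\geq 1\}$ onto $\{n_1>n_2\geq 1\}$ (the inequality $k\geq n$ becomes the strict $n_1>n_2$ because the variables are integers). Under this substitution the integer denominator becomes $k^p=(n_1-1)^p=2^{-p}(2n_1-2)^p$, the half-integer denominator becomes $(n-1/2)^q=2^{-q}(2n_2-1)^q$, and the signs simplify via $\si_1^{k-1}=\si_1^{n_1-2}=\si_1^{n_1}$ and $\si_2^{n-1}=\si_2^{n_2-1}=\si_2\,\si_2^{n_2}$. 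Comparing with
\[
T(p,q;\si_1,\si_2)=4\sum_{n_1>n_2\geq 1}\frac{\si_1^{n_1}\si_2^{n_2}}{(2n_1-2)^p(2n_2-1)^q}
\]
then yields
\[
\sum_{k\geq n\geq 1}\frac{\si_1^{k-1}\si_2^{n-1}}{k^p(n-1/2)^q}=2^{p+q-2}\si_2\,T(p,q;\si_1,\si_2)\,.
\]
Collecting the two ranges gives $\si_1\si_2\ze(p;\si_1)\t(q;\si_2)=R_{p,q}^{\si_1,\si_2}+2^{p+q-2}\si_2 T(p,q;\si_1,\si_2)$, which is the claimed relation after rearrangement.

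I expect the only delicate point to be the bookkeeping of the index shift and the sign conventions in the final matching step---in particular getting the shift $n_1=k+1$ right so that the boundary case $k=n$ lands correctly inside the strict domain $n_1>n_2$, and confirming that the rearrangement of the (possibly conditionally convergent) product into the two ordered ranges is legitimate under the stated convergence hypotheses. Everything else is a direct application of the definitions of $R_{p,q}^{\si_1,\si_2}$, $\ze(p;\si_1)$, $\t(q;\si_2)$ and the alternating multiple $T$-values.
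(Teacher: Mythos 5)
Your proof is correct and takes essentially the same route as the paper: the paper's own proof also expands the product $\si_1\si_2\ze(p;\si_1)\t(q;\si_2)$ as a double sum and splits it into the ranges $n>k\geq 1$ and $k\geq n\geq 1$, identifying the first with $R_{p,q}^{\si_1,\si_2}$ and the second with $2^{p+q-2}\si_2T(p,q;\si_1,\si_2)$. The only difference is one of presentation: you spell out the reindexing $n_1=k+1$, $n_2=n$ and the power-of-two and sign bookkeeping that the paper leaves implicit in the phrase ``from which, the relation given in the theorem follows.''
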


\begin{proof}
By definitions, we have
\begin{align*}
R_{p,q}^{\si_1,\si_2}
&=\sum_{n=1}^{\infty}\si_2^{n-1}\frac{H_{n-1}^{(p)}(\si_1)}{(n-1/2)^q}
    =\sum_{n=1}^{\infty}\frac{\si_2^{n-1}}{(n-1/2)^q}\sum_{k=1}^{n-1}\frac{\si_1^{k-1}}{k^p}\\
&=\si_1\si_2\sum_{k=1}^{\infty}\frac{\si_1^k}{k^p}
        \sum_{n=1}^{\infty}\frac{\si_2^n}{(n-1/2)^q}
        -\sum_{k\geq n\geq 1}\frac{\si_1^{k-1}\si_2^{n-1}}{k^p(n-1/2)^q}\,,
\end{align*}
from which, the relation given in the theorem follows.
\end{proof}


\section{Quadratic sums}\label{Sec.qua}

In this section, we establish general identities on sums related to three real sequences $A=\{a_k\}_{k\in\mathbb{Z}}$, $B=\{b_k\}_{k\in\mathbb{Z}}$ and $C=\{c_k\}_{k\in\mathbb{Z}}$, which satisfy the same condition as the sequences $A,B$ in Section \ref{Sec.lin}. From the general identities, some (alternating) quadratic Euler $T$-sums and Euler $\S$-sums can be evaluated.


\subsection{The third general identity and quadratic Euler $T$-sums}

Now, we give the third general identity.

\begin{theorem}\label{Th.qua.sum1}
For integers $m,p\geq1$ and $q\geq2$, the following identity on sums related to the sequences $A,B,C$ holds:
\begin{align*}
&(-1)^{m+p+q}\sum_{n=1}^\infty\frac{a_{n-1}\bar{N}\b_n(m)\bar{N}\c_n(p)}{(n-1/2)^q}
    +\sum_{n=1}^\infty\frac{a_nN\b_n(m)N\c_n(p)}{(n-1/2)^q}\\
&\quad-(-1)^{m+p}\sum_{k=0}^{m+p-1}\binom{m+p+q-k-2}{q-1}
    \sum_{n=1}^\infty\frac{b_nc_nS\a_{n+1}(k+1)}{n^{m+p+q-k-1}}\\
&\quad-(-1)^m\sum_{j=1}^m\sum_{k=0}^{m-j}\binom{p+j-2}{p-1}\binom{m+q-j-k-1}{q-1}
    \sum_{n=1}^\infty\frac{b_nM\c_n(p+j-1)S\a_{n+1}(k+1)}{n^{m+q-j-k}}\\
&\quad-(-1)^p\sum_{j=1}^p\sum_{k=0}^{p-j}\binom{m+j-2}{m-1}\binom{p+q-j-k-1}{q-1}
    \sum_{n=1}^\infty\frac{c_nM\b_n(m+j-1)S\a_{n+1}(k+1)}{n^{p+q-j-k}}\\
&\quad=-\mathcal{G}_0\,,
\end{align*}
where
\begin{align*}
\mathcal{G}_0
&=-b_0c_0\{(-1)^{m+p+q}\hat{t}\a(m+p+q)+\t\a(m+p+q)\}\\
&\quad+b_0(-1)^p\sum_{j=1}^{m+q}\binom{p+j-2}{p-1}D\c(p+j-1)\check{t}\a(m+q-j+1)\\
&\quad+c_0(-1)^m\sum_{j=1}^{p+q}\binom{m+j-2}{m-1}D\b(m+j-1)\check{t}\a(p+q-j+1)\\
&\quad
\begin{aligned}
+(-1)^{m+p}\sum_{\substack{j_1+j_2+j_3=q+2\\j_1,j_2,j_3\geq 1}}
    &\binom{m+j_1-2}{m-1}\binom{p+j_2-2}{p-1}\\
    &\times D\b(m+j_1-1)D\c(p+j_2-1)\check{t}\a(j_3)\,.
\end{aligned}
\end{align*}
\end{theorem}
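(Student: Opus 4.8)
The plan is to apply the residue theorem (Lemma \ref{Lem.Res}) exactly as in the proofs of Theorems \ref{Th.linear.sum1} and \ref{Th.linear.sum2}, but now to a kernel function carrying \emph{two} parametric digamma factors. Specifically, I would take
\[
\xi(s)=\pi\cot(\pi s;A)\,
    \frac{\vPsi^{(m-1)}(\tf{1}{2}-s;B)}{(m-1)!}\,
    \frac{\vPsi^{(p-1)}(\tf{1}{2}-s;C)}{(p-1)!}
\]
together with the base function $r(s)=(s-1/2)^{-q}$. Since each parametric digamma factor is $O(\log s)$ and $\pi\cot(\pi s;A)$ is bounded on the standard squares, the product $\xi(s)r(s)$ decays fast enough that Lemma \ref{Lem.Res} applies, giving that the sum of all residues is zero. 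The singularities are again located only at the integers $\pm n$ and at the half-integers $n-1/2$, and the whole proof reduces to classifying these poles, computing each residue via the local expansions already recorded in Section \ref{Sec.Notation.Exp}, and summing.

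The key computational steps are as follows. First, at the integers $s=n$ (for $n\geq1$) and $s=-n$ (for $n\geq0$) the kernel has simple poles coming from $\pi\cot(\pi s;A)$, so I would use Lemma \ref{Lem.cot.nZ} for the cotangent factor and Lemma \ref{Lem.Psi.nZ} (Eqs. (\ref{2.1}) and (\ref{2.2})) for each of the two digamma factors; this produces the two quadratic-in-$N$ sums $\sum a_n N\b_n(m)N\c_n(p)/(n-1/2)^q$ and its barred analogue. Second, at a half-integer $s=n-1/2$ (for $n\geq2$) both digamma factors contribute poles, so the pole has order $m+p$; here I would expand the cotangent factor and the regular factor $r(s)$ by Lemma \ref{Lem.cot.hn}, and expand each digamma factor by Lemma \ref{Lem.Psi.hn}. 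Extracting the appropriate coefficient amounts to a threefold Cauchy product, which after collecting terms splits into the pure term $b_nc_n S\a_{n+1}(k+1)$ and the two mixed terms involving $M\c_n(p+j-1)$ and $M\b_n(m+j-1)$ — these are exactly the three families of sums on the left. Third, the special half-integer $s=1/2$ carries the highest-order pole (order $m+p+q$, using Eq. (\ref{Psi.s.0.5}) for each digamma factor and Eq. (\ref{cot.dm.lim}) for the cotangent derivatives), and its residue furnishes the entire constant term $\mathcal{G}_0$, with the boundary values $b_0,c_0,\hat{t}\a,\t\a,\check{t}\a$ and the double convolution $D\b\,D\c\,\check{t}\a$.

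I expect the main obstacle to be the bookkeeping at the order-$(m+p)$ half-integer poles and at the single order-$(m+p+q)$ pole at $1/2$. Unlike the linear case, extracting the residue now requires multiplying three power series (two digamma expansions and one cotangent/base expansion) and reading off a single coefficient; keeping the binomial weights $\binom{m+j_1-2}{m-1}\binom{p+j_2-2}{p-1}$ and the triple index constraint $j_1+j_2+j_3=q+2$ correct is where sign errors and off-by-one errors naturally creep in. The cross terms are the genuinely new feature: when both $\vPsi^{(m-1)}$ and $\vPsi^{(p-1)}$ are expanded at $n-1/2$ via Lemma \ref{Lem.Psi.hn}, the leading pole of one factor multiplies a Taylor coefficient of the other, which is what generates the mixed $M$-sums indexed by the inner summation variable $j$. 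Once these Cauchy products are organized carefully — most cleanly by treating the order-$(m+p)$ residue as the coefficient of $(s-n+\tf12)^{m+p-1}$ in the product of the three local expansions — the remaining manipulations are routine, and summing the four residue contributions and invoking $\sum\operatorname{Res}=0$ yields the stated identity.
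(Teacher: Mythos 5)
Your proposal is correct and follows essentially the same route as the paper: the same kernel $\pi\cot(\pi s;A)\,\vPsi^{(m-1)}(\tfrac12-s;B)\vPsi^{(p-1)}(\tfrac12-s;C)/((m-1)!(p-1)!)$ with base function $(s-1/2)^{-q}$, the same classification of poles (simple at the integers via Lemmas \ref{Lem.Psi.nZ} and \ref{Lem.cot.nZ}, order $m+p$ at half-integers $n-1/2$, $n\geq2$, via Lemmas \ref{Lem.Psi.hn} and \ref{Lem.cot.hn}, and order $m+p+q$ at $s=1/2$ via Eqs. (\ref{Psi.s.0.5}) and (\ref{cot.dm.lim})), and the same final step of summing all residues to zero. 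Your identification of the cross terms arising from the Cauchy product of the two digamma expansions, and of the residue at $1/2$ as the source of $\mathcal{G}_0$ with its triple-index convolution, matches the paper's computation exactly.
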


\begin{proof}
Consider the kernel function
\begin{equation}\label{kerfun.xi2}
\xi_2(s)=\pi\cot(\pi s;A)\frac{\vPsi^{(m-1)}(\tf{1}{2}-s;B)\vPsi^{(p-1)}(\tf{1}{2}-s;C)}
    {(m-1)!(p-1)!}
\end{equation}
and the base function $r_1(s)=(s-1/2)^{-q}$. It is obvious that the function $\mathcal{G}(s)=\xi_2(s)r_1(s)$ has simple poles at $s=-n$ for $n\geq 0$, with residues
\[
{\rm Res}(\mathcal{G}(s),-n)
    =(-1)^{m+p+q}\frac{a_n\bar{N}\b_{n+1}(m)\bar{N}\c_{n+1}(p)}{(n+1/2)^q}\,,
\]
and simple poles at $s=n$ for $n\geq 1$, with residues
\[
{\rm Res}(\mathcal{G}(s),n)
    =\frac{a_nN\b_n(m)N\c_n(p)}{(n-1/2)^q}\,,
\]
where Lemmas \ref{Lem.Psi.nZ} and \ref{Lem.cot.nZ} are used. Next, $\mathcal{G}(s)$ has poles of order $p+m$ at $s=n-1/2$ for $n\geq 2$. By Lemmas \ref{Lem.Psi.hn} and \ref{Lem.cot.hn}, we find that the residues are
\begin{align*}
&{\rm Res}(\mathcal{G}(s),n-\tf{1}{2})
    =-(-1)^{m+p}\sum_{k=0}^{m+p-1}\binom{m+p+q-k-2}{q-1}
        \frac{b_{n-1}c_{n-1}S\a_{n}(k+1)}{(n-1)^{m+p+q-k-1}}\\
&\quad-(-1)^m\sum_{j=1}^m\sum_{k=0}^{m-j}\binom{p+j-2}{p-1}\binom{m+q-j-k-1}{q-1}
    \frac{b_{n-1}M\c_{n-1}(p+j-1)S\a_{n}(k+1)}{(n-1)^{m+q-j-k}}\\
&\quad-(-1)^p\sum_{j=1}^p\sum_{k=0}^{p-j}\binom{m+j-2}{m-1}\binom{p+q-j-k-1}{q-1}
    \frac{c_{n-1}M\b_{n-1}(m+j-1)S\a_{n}(k+1)}{(n-1)^{p+q-j-k}}\,.
\end{align*}
Moreover, $\mathcal{G}(s)$ has a pole of order $m+p+q$ at $s=1/2$. Using Eqs. (\ref{Psi.s.0.5}) and (\ref{cot.dm.lim}), the residue ${\rm Res}(\mathcal{G}(s),1/2)$ is found to be $\mathcal{G}_0$ given in the theorem. Hence, combining these four residue results, we obtain the desired result.
\end{proof}

Denote
\begin{align*}
&\begin{aligned}
\mathcal{G}_1=-\si_1\si_2\si_3
    \bibb{
        \begin{array}{c}
            (-1)^m\de_{\si_2\si_3}^{p+q}\t(m;\si_1)T_{p,q}^{\si_2,\si_3}
                +(-1)^p\de_{\si_1\si_3}^{m+q}\t(p;\si_2)T_{m,q}^{\si_1,\si_3}\\
            +T_{m,p+q}^{\si_1,\si_2\si_3}+T_{p,m+q}^{\si_2,\si_1\si_3}
        \end{array}
    }\,,
\end{aligned}\\
&\begin{aligned}
\mathcal{G}_2&=-\si_1(-1)^m\t(m;\si_1)\t(p+q;\si_2\si_3)
        -\si_2(-1)^p\t(p;\si_2)\t(m+q;\si_1\si_3)\\
    &\quad-\si_1\si_2(-1)^{m+p}\de_{\si_3}^{q-1}\t(m;\si_1)\t(p;\si_2)\t(q;\si_3)
        +\si_1\si_2\si_3(-1)^{m+p+q}\t(m+p+q;\si_1\si_2\si_3)\,,
\end{aligned}\\
&\begin{aligned}
\mathcal{G}_3=\si_1\si_2\si_3(-1)^{m+p}
    \sum_{k=0}^{m+p-1}&\binom{m+p+q-k-2}{q-1}\de_{\si_1\si_2\si_3}^k\t(k+1;\si_1\si_2\si_3)\\
    &\times\ze(m+p+q-k-1;\si_3)\,,
\end{aligned}\\
&\begin{aligned}
\mathcal{G}_4&=(-1)^m\sum_{j=1}^m\sum_{k=0}^{m-j}
    \binom{p+j-2}{p-1}\binom{m+q-j-k-1}{q-1}
        \de_{\si_1\si_2\si_3}^k\t(k+1;\si_1\si_2\si_3)\\
    &\qquad\qquad\qquad\qquad\times\bibb{
        \begin{array}{c}
            \si_1S_{p+j-1,m+q-j-k}^{\si_2,\si_3}\\
            -\si_1\si_2\si_3(-1)^{p+j}\ze(p+j-1;\si_2)\ze(m+q-j-k;\si_3)
        \end{array}
        }\\
&\quad+(-1)^p\sum_{j=1}^p\sum_{k=0}^{p-j}
    \binom{m+j-2}{m-1}\binom{p+q-j-k-1}{q-1}
        \de_{\si_1\si_2\si_3}^k\t(k+1;\si_1\si_2\si_3)\\
    &\qquad\qquad\qquad\qquad\times\bibb{
        \begin{array}{c}
            \si_2S_{m+j-1,p+q-j-k}^{\si_1,\si_3}\\
            -\si_1\si_2\si_3(-1)^{m+j}\ze(m+j-1;\si_1)\ze(p+q-j-k;\si_3)
        \end{array}
        }\,,
\end{aligned}\\
&\begin{aligned}
\mathcal{G}_5&=(-1)^m\sum_{j=1}^{p+q}\binom{m+j-2}{m-1}
    \de_{\si_1\si_2\si_3}^{p+q-j}\t(p+q-j+1;\si_1\si_2\si_3)\ze(m+j-1;\si_1)\\
&\quad+(-1)^p\sum_{j=1}^{m+q}\binom{p+j-2}{p-1}
    \de_{\si_1\si_2\si_3}^{m+q-j}\t(m+q-j+1;\si_1\si_2\si_3)\ze(p+j-1;\si_1)\,,
\end{aligned}\\
&\begin{aligned}
\mathcal{G}_6=(-1)^{m+p}\sum_{\substack{j_1+j_2+j_3=q+2\\j_1,j_2,j_3\geq 1}}
    &\binom{m+j_1-2}{m-1}\binom{p+j_2-1}{p-1}\\
    &\times\ze(m+j_1-1)\ze(p+j_2-1;\si_2)
        \de_{\si_1\si_2\si_3}^{j_3-1}\t(j_3;\si_1\si_2\si_3)\,,
\end{aligned}
\end{align*}
where $S_{p,q}^{\si_1,\si_2}$ are the classical linear Euler sums, written in an unified way similar to Eqs. (\ref{Tsum.Unify}) and (\ref{Stsum.Unify}). Thus, the next result holds.

\begin{theorem}\label{Th.qua.Tsum}
The quadratic Euler $T$-sums $T_{m,p,q}^{\si_1,\si_2,\si_3}$ satisfy
\[
\si_1\si_2\si_3\de_{\si_1\si_2\si_3}^{m+p+q-1}T_{m,p,q}^{\si_1,\si_2,\si_3}
    =\mathcal{G}_1+\cdots+\mathcal{G}_6\,,
\]
where $\mathcal{G}_i$, for $i=1,\ldots,6$, are defined as above.
\end{theorem}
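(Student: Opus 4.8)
This theorem is the specialization of the general identity in Theorem \ref{Th.qua.sum1} obtained by letting each of the three sequences $A,B,C$ range over $\{A_1,A_2\}$, exactly paralleling how Theorem \ref{Th.lin.Tsum} was deduced from Theorem \ref{Th.linear.sum1}. The strategy is therefore to substitute the closed forms of the sums $N_n^{(A_i)}(j)$, $\bar{N}_n^{(A_i)}(j)$, $M_n^{(A_i)}(j)$, $S_{n+1}^{(A_i)}(j)$, $D^{(A_i)}(j)$, $\hat{t}^{(A_i)}(j)$, $\t^{(A_i)}(j)$ and $\check{t}^{(A_i)}(j)$ (all catalogued explicitly in Section \ref{Sec.Notation.Exp}) into the left-hand side and into $\mathcal{G}_0$, and then to recognize the resulting series and products as the quantities collected in $\mathcal{G}_1,\ldots,\mathcal{G}_6$.

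First I would treat the three infinite sums on the left of Theorem \ref{Th.qua.sum1}. After substituting $\bar{N}_n^{(A_i)}(m)$, $N_n^{(A_i)}(p)$ and $S_{n+1}^{(A_i)}(k+1)$, the $(-1)^{n-1}$ or $(-1)^n$ factors carried by the $A_2$ evaluations assemble, via the sign bookkeeping encoded in $\de_\si^k=1-\si(-1)^k$, into the alternating sign patterns that define $T_{m,p,q}^{\si_1,\si_2,\si_3}$ and the lower-degree sums $T_{p,q}^{\si_2,\si_3}$, $T_{m,q}^{\si_1,\si_3}$, $S_{p+j-1,m+q-j-k}^{\si_2,\si_3}$, and $S_{m+j-1,p+q-j-k}^{\si_1,\si_3}$. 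The coefficient $\si_1\si_2\si_3\de_{\si_1\si_2\si_3}^{m+p+q-1}$ on the target sum emerges from combining the two boundary contributions (the $s=\pm n$ residue families) after collecting their signs. Meanwhile the $D^{(A_i)}$ factors produce the Riemann and Dirichlet zeta values $\ze(\,\cdot\,;\si)$ appearing inside $\mathcal{G}_4$, $\mathcal{G}_5$ and $\mathcal{G}_6$, and the $\check{t}^{(A_i)}$ and $\hat{t}^{(A_i)},\t^{(A_i)}$ factors inside $\mathcal{G}_0$ convert into the $\t(\,\cdot\,;\si)$ values throughout $\mathcal{G}_1,\ldots,\mathcal{G}_6$ — here one uses $\check{t}^{(A_1)}(j)=-(1+(-1)^j)\t(j)$ and $\check{t}^{(A_2)}(j)=-(1-(-1)^j)\t(\bar{j})$, which are precisely what generate the $\de_{\si_1\si_2\si_3}^{\,\cdot}$ prefactors.

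The grouping into the six blocks is dictated by the structure of $\mathcal{G}_0$ in Theorem \ref{Th.qua.sum1}: $\mathcal{G}_2$ comes from the $b_0c_0$ term, $\mathcal{G}_5$ from the two single-boundary $b_0$ and $c_0$ terms, and $\mathcal{G}_6$ from the triple-convolution $j_1+j_2+j_3=q+2$ term, while $\mathcal{G}_1$, $\mathcal{G}_3$ and $\mathcal{G}_4$ collect the genuinely series-valued contributions (the products $\t(m)T_{p,q}$ etc., the single-sum-over-$k$ piece, and the double-sum-over-$j,k$ piece carrying the $S_{\,\cdot,\cdot}$ Euler sums). In each block the subtracted products of $\ze$-values are exactly the pieces peeled off when an $A_2$ substitution splits a normalized sum into a "pure'' Euler sum plus a product of zeta/beta values — the same mechanism seen in the proof of Theorem \ref{Th.lin.Tsum}. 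As in that earlier proof, the claim is that all eight sign-choices $(A,B,C)\in\{A_1,A_2\}^3$ are simultaneously captured by the single unified formula once written in terms of $\si_1,\si_2,\si_3$ and the symbols $\de_\si^k$.

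The main obstacle is not conceptual but the sheer combinatorial bookkeeping of signs: unlike the linear case, here one must track products of up to three factors each carrying its own $(-1)^n$-type sign and its own $\de_\si$-prefactor, and verify that the binomial-coefficient-weighted convolutions over $j$ and $k$ repackage correctly into $\mathcal{G}_3$ through $\mathcal{G}_6$ without residual cross terms. In particular, checking that the $M_n^{(A_i)}(j)=H_n^{(j)}(\,\cdot\,)+(\text{zeta correction})$ substitutions inside the $j,k$ double sums cleanly separate into the $S_{\,\cdot,\cdot}$ Euler sums of $\mathcal{G}_4$ plus the product-of-$\ze$ remainders requires careful attention to the interpretive conventions $\ze(1):=0$ and $\t(1):=2\ln(2)$. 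Once the sign and convolution bookkeeping is verified in this unified notation, the theorem follows directly from Theorem \ref{Th.qua.sum1}.
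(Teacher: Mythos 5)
Your overall route---specializing Theorem \ref{Th.qua.sum1} to the eight choices $(A,B,C)\in\{A_1,A_2\}^3$ and regrouping the substituted evaluations into $\mathcal{G}_1,\ldots,\mathcal{G}_6$---is the same as the paper's, but your conclusion that the theorem then ``follows directly'' from Theorem \ref{Th.qua.sum1} misses the step that carries the real content. Direct substitution closes only for $(A,B,C)=(A_1,A_1,A_1)$ and $(A_2,A_1,A_1)$, i.e.\ for $T_{mp,q}$ and $T_{mp,\bar{q}}$, because the $A_1$-evaluations of $N_n$, $\bar{N}_n$, $M_n$ contain no Kronecker deltas. As soon as $B$ or $C$ equals $A_2$, the catalogued evaluations, e.g.\ $N^{(A_2)}_n(j)=(-1)^{n-1}\bar{h}^{(j)}_n+(-1)^{n+j-1}\t(\bar{j})-\de_{j,1}\ln(2)$ and $\bar{N}^{(A_2)}_n(j)=(-1)^{n}\bar{h}^{(j)}_{n-1}+(-1)^n\t(\bar{j})+\de_{j,1}\ln(2)$, inject additive constants $\pm\ln(2)$ precisely when $m=1$ or $p=1$. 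Multiplied against the other factors in the residue sums, these constants leave behind residual blocks of the form $\ln(2)\times(\text{lower-degree }T\text{-sums and products of }\ze,\t\text{ values})$, together with matching delta contributions from the $D\b(m+j-1)$ and $D\c(p+j-1)$ factors inside $\mathcal{G}_0$; none of these blocks occur in $\mathcal{G}_1+\cdots+\mathcal{G}_6$. They are not disposed of by the conventions $\ze(1):=0$ and $\t(1):=2\ln(2)$, which govern values already sitting in the unified formula; here one faces genuinely extraneous $\ln(2)$ multiples of lower-degree sums created by the substitution.

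The missing idea is how those residual blocks are removed: the paper eliminates them by invoking the already-proved linear identities, i.e.\ the expressions for $T_{p,q}$, $T_{p,\bar{q}}$, $T_{\bar{p},q}$, $T_{\bar{p},\bar{q}}$ of Theorem \ref{Th.lin.Tsum}, after splitting into the four cases according to whether or not $m$ and $p$ equal $1$; the Kronecker-delta terms assemble into $\ln(2)$ times combinations that the linear theorem allows one to cancel or merge into $\mathcal{G}_1,\ldots,\mathcal{G}_6$. Thus Theorem \ref{Th.qua.Tsum} is not a self-contained specialization of Theorem \ref{Th.qua.sum1}: its proof uses the degree-one theorem as an input, in an induction-on-degree spirit. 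Your proposal cites Theorem \ref{Th.lin.Tsum} only as an analogy, never as an ingredient, so in the six sign patterns with $\si_1=-1$ or $\si_2=-1$ your bookkeeping cannot close as written. A smaller omission: Theorem \ref{Th.qua.sum1} assumes $q\geq2$, so the $q=1$, $\si_3=-1$ instances covered by the statement require the separate verification that the paper notes at the end of its proof.
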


\begin{proof}
For integers $m,p\geq1$ and $q\geq2$, letting $(A,B,C)$ be $(A_1,A_1,A_1)$ and $(A_2,A_1,A_1)$ in Theorem \ref{Th.qua.sum1} yields the results of $T_{mp,q}$ and $T_{mp,\bar{q}}$ directly, which coincide with the identity given in Theorem \ref{Th.qua.Tsum}.

Next, setting $(A,B,C)$ by $(A_1,A_2,A_2)$ and $(A_2,A_2,A_2)$ gives the expressions of $T_{\bar{m}\bar{p},q}$ and $T_{\bar{m}\bar{p},\bar{q}}$, respectively. Note that in the transformations, by considering the four cases according to whether or not $m,p$ equal 1, and using the expressions of $T_{\bar{p},\bar{q}}$ and $T_{\bar{p},q}$, we can eliminate the combinations of the terms on the Kronecker delta, which arise when replacing $A,B,C$ by $A_2$. Similarly, using the results of $T_{p,q}$ and $T_{p,\bar{q}}$ in Theorem \ref{Th.lin.Tsum} to eliminate the terms on the Kronecker delta, we establish the expressions of the quadratic $T$-sums of the other forms.

It can be verified that when $q=1$, the sums $T_{m,p,q}^{\si_1,\si_2,\si_3}$ with $\si_3=-1$ also satisfy the identity in this theorem.
\end{proof}

Theorem \ref{Th.qua.Tsum} leads us to the following parity theorem of the quadratic Euler $T$-sums.

\begin{corollary}\label{Coro.Tmpq.PT}
The quadratic Euler $T$-sums $T_{m,p,q}^{\si_1,\si_2,\si_3}$ reduce to combinations of zeta values, the Dirichlet beta values, the classical linear Euler sums and the linear Euler $T$-sums, whenever the weight $w=m+p+q$ and the quantity $\frac{1-\si_1\si_2\si_3}{2}$ are of the same parity.
\end{corollary}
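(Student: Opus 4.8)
The plan is to read the result straight off the explicit evaluation in Theorem~\ref{Th.qua.Tsum}, which asserts
\[
\si_1\si_2\si_3\de_{\si_1\si_2\si_3}^{m+p+q-1}T_{m,p,q}^{\si_1,\si_2,\si_3}
    =\mathcal{G}_1+\cdots+\mathcal{G}_6.
\]
The whole statement then splits into two points: (i) deciding when the scalar prefactor $\de_{\si_1\si_2\si_3}^{m+p+q-1}$ is invertible, and (ii) identifying which species of objects occur in $\mathcal{G}_1,\dots,\mathcal{G}_6$. First I would dispose of (i). Writing $w=m+p+q$ and recalling $\de_\si^k=1-\si(-1)^k$, one has $\de_{\si_1\si_2\si_3}^{w-1}=1-\si_1\si_2\si_3(-1)^{w-1}$, which is nonzero precisely when $\si_1\si_2\si_3=(-1)^w$, and in that case equals $2$.

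Next I would check that this solvability condition is exactly the parity hypothesis. The quantity $\tf{1}{2}(1-\si_1\si_2\si_3)$ equals $0$ when $\si_1\si_2\si_3=1$ and equals $1$ when $\si_1\si_2\si_3=-1$; hence it shares the parity of $w$ iff ($\si_1\si_2\si_3=1$ and $w$ even) or ($\si_1\si_2\si_3=-1$ and $w$ odd), i.e. iff $\si_1\si_2\si_3=(-1)^w$. This is identical to the condition $\de_{\si_1\si_2\si_3}^{w-1}=2\neq0$ found above, so under the hypothesis Theorem~\ref{Th.qua.Tsum} may be solved (using $\si_1\si_2\si_3=\pm1$) as
\[
T_{m,p,q}^{\si_1,\si_2,\si_3}=\tf{1}{2}\si_1\si_2\si_3\,(\mathcal{G}_1+\cdots+\mathcal{G}_6).
\]

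It then remains to inspect the six pieces and confirm that every summand lies in the declared span. By the relations recorded around (\ref{AZV}) in the Introduction, each single value $\ze(s;\pm1)$ reduces to a zeta value (with $\ze(\bar{s})=(2^{1-s}-1)\ze(s)$), and each single value $\t(s;\pm1)$ reduces to a zeta value or a Dirichlet beta value (since $\t(s;1)=(2^s-1)\ze(s)$ and $\t(s;-1)=\t(\bar{s})=-2^s\be(s)$), the only further constant being $\ln(2)$, which enters through the conventions $\ze(1):=0$ and $\t(1):=2\ln(2)$. With this in hand, $\mathcal{G}_2,\mathcal{G}_3,\mathcal{G}_5,\mathcal{G}_6$ are manifestly finite combinations of products of such single values; $\mathcal{G}_1$ adds the factors $T_{p,q}^{\si_2,\si_3}$, $T_{m,q}^{\si_1,\si_3}$, $T_{m,p+q}^{\si_1,\si_2\si_3}$ and $T_{p,m+q}^{\si_2,\si_1\si_3}$; and $\mathcal{G}_4$ adds the classical sums $S_{p+j-1,\,m+q-j-k}^{\si_2,\si_3}$ and $S_{m+j-1,\,p+q-j-k}^{\si_1,\si_3}$ together with products of single values. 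The hard part will be (ii): one must verify that every $T$ and $S$ occurring carries a single subscript/superscript index, hence is genuinely \emph{linear} --- a linear Euler $T$-sum or a classical linear Euler sum --- so that no honestly quadratic object survives on the right, the Kronecker-delta cancellations having already been absorbed in the proof of Theorem~\ref{Th.qua.Tsum}. Granting this bookkeeping, and noting that the boundary cases $q=1$ with $\si_3=-1$ are already covered by that theorem, the displayed formula expresses $T_{m,p,q}^{\si_1,\si_2,\si_3}$ in terms of zeta values, Dirichlet beta values, classical linear Euler sums, and linear Euler $T$-sums, as claimed.
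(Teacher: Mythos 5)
Your proposal is correct and matches the paper's (implicit) argument: the corollary is stated as an immediate consequence of Theorem \ref{Th.qua.Tsum}, and your two checks---that the stated parity hypothesis is exactly the condition $\si_1\si_2\si_3=(-1)^w$ under which $\de_{\si_1\si_2\si_3}^{m+p+q-1}=2\neq0$, and that $\mathcal{G}_1,\ldots,\mathcal{G}_6$ involve only single values $\ze(\cdot;\pm1)$, $\t(\cdot;\pm1)$, classical linear Euler sums and linear Euler $T$-sums---are precisely what makes that deduction go through.
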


In particular, based on the evaluations of the linear Euler $T$-sums listed in Sections \ref{Sec.lin.Tsum} and \ref{Sec.linES.CMZV}, some quadratic sums are reducible directly to known constants.

\begin{example}
We present the following two sums of odd weight:
\begin{align*}
&T_{11,\bar{1}}=16\imm(\Li_3(\tf{1}{2}+\tf{\ui}{2}))-\tf{1}{3}\pi^3\,,\\
&T_{\bar{1}\bar{1},\bar{1}}=8\imm(\Li_3(\tf{1}{2}+\tf{\ui}{2}))
    -\tf{1}{4}\pi\ln(2)^2-\tf{11}{48}\pi^3\,,
\end{align*}
and the following eleven ones of even weight:
\begin{align*}
&T_{11,2}=16\Lii_4(\tf{1}{2})+\tf{2}{3}\ln(2)^4+\tf{4}{3}\pi^2\ln(2)^2-\tf{23}{360}\pi^4\,,\\
&T_{11,4}=32\ze(\bar{5},1)-62\ln(2)\ze(5)-\tf{17}{2}\ze(3)^2-2\pi^2\ln(2)\ze(3)+\tf{2}{3}\pi^4\ln(2)^2
    +\tf{37}{840}\pi^6\,,\\
&T_{12,3}=16\ze(\bar{5},1)-31\ln(2)\ze(5)-\tf{31}{4}\ze(3)^2+3\pi^2\ln(2)\ze(3)
    +\tf{13}{1008}\pi^6\,,\\
&T_{13,2}=16\ze(\bar{5},1)-31\ln(2)\ze(5)-\tf{33}{2}\ze(3)^2+4\pi^2\ln(2)\ze(3)
    +\tf{181}{5040}\pi^6\,,\\
&T_{22,2}=-7\ze(3)^2+\tf{73}{2520}\pi^6\,,
\end{align*}
and
\begin{align*}
&T_{\bar{1}\bar{1},2}=8\Lii_4(\tf{1}{2})+\tf{1}{3}\ln(2)^4
    +\tf{1}{6}\pi^2\ln(2)^2-\tf{31}{1440}\pi^4\,,\\
&T_{1\bar{1},\bar{2}}=12\Lii_4(\tf{1}{2})+\tf{1}{2}\ln(2)^4
    +\tf{1}{2}\pi^2\ln(2)^2-\tf{101}{960}\pi^4\,,\\
&T_{1\bar{2},\bar{1}}=8\Lii_4(\tf{1}{2})-4G^2+2\pi\ln(2)G+\tf{1}{3}\ln(2)^4
    -\tf{13}{12}\pi^2\ln(2)^2-\tf{23}{720}\pi^4\,,\\
&T_{2\bar{1},\bar{1}}=4\Lii_4(\tf{1}{2})+\tf{1}{6}\ln(2)^4
    -\tf{5}{12}\pi^2\ln(2)^2-\tf{151}{2880}\pi^4\,,\\
&T_{1\bar{3},\bar{2}}=16\ze(\bar{5},1)-31\ln(2)\ze(5)-\tf{17}{4}\ze(3)^2+\tf{9}{4}\pi^2\ln(2)\ze(3)
    +\tf{47}{10080}\pi^6\,,\\
&T_{2\bar{3},\bar{1}}=3\pi^2\Lii_4(\tf{1}{2})-\tf{7}{2}\ze(3)^2
    +\tf{1}{8}\pi^2\ln(2)^4-\tf{1}{8}\pi^4\ln(2)^2-\tf{19}{1008}\pi^6\,.
\end{align*}
\end{example}

\begin{example}
It can be found that the sum $T_{11,\bar{2}}$ can not be covered by Theorem \ref{Th.qua.Tsum} and Corollary \ref{Coro.Tmpq.PT}. As a supplement, we give the evaluation of this sum by CMZVs of level four:
\begin{align*}
T_{11,\bar{2}}&=\sum_{n=1}^{\infty}(-1)^{n-1}\frac{h_{n-1}^2}{(n-1/2)^2}
    =\sum_{n>k_1,k_2\geq1}\frac{(-1)^{n-1}}{(n-1/2)^2(k_1-1/2)(k_2-1/2)}\\
&=4(-1)^{\frac{3}{2}}\sum_{m_1>m_2>m_3\geq 1}
    \frac{\om^{m_1}\prod_{j=1}^3(1-\om^{2m_j})}{m_1^2m_2m_3}
    +\sum_{n>k\geq 1}(-1)^{n-1}\frac{h_{n-1}^{(2)}}{(n-1/2)^2}\\
&=4(-1)^{\frac{3}{2}}\sum_{i,j,k\in\{0,1\}}(-1)^{i+j+k}L_4(2,1,1;2i+1,2j,2k)+T_{2,\bar{2}}\\
&=128\imm(\Li_4(\tf{1}{2}+\tf{\ui}{2}))-96\be(4)
    +\tf{7}{8}\pi\ze(3)+\tf{2}{3}\pi\ln(2)^3+\pi^3\ln(2)\,,
\end{align*}
where $\om=\exp(\pi\ui/2)=\ui$.\hfill\qedsymbol
\end{example}


\subsection{Parity theorem of the Hoffman tripe $t$-values}

Next, let us present the parity theorem of the Hoffman tripe $t$-values.

\begin{theorem}
When the weight $w=m+p+q$ and the quantity $\frac{1-\si_1\si_2\si_3}{2}$ have the same parity, the triple $t$-values $t(m,p,q;\si_1,\si_2,\si_3)$ and triple $\t$-values $\t(m,p,q;\si_1,\si_2,\si_3)$, where $(p,\si_2)\neq(1,1)$ or $(q,\si_3)\neq(1,1)$, are reducible to combinations of zeta values, the Dirichlet beta values, double zeta values and double $t$-values.
\end{theorem}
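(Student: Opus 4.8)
The plan is to reduce the parity statement for triple $t$-values to the parity theorem for quadratic Euler $T$-sums already established in Corollary \ref{Coro.Tmpq.PT}. The key link is a ``stuffle-type'' expansion that expresses a quadratic $T$-sum as a sum of triple $\t$-values (and lower-depth pieces). First I would write, in analogy with the identity (\ref{Tpq.DMtV}) that produced $T_{p,q}^{\si_1,\si_2}=\si_1\si_2\t(q,p;\si_2,\si_1)$, the expansion of the quadratic sum
\[
T_{m,p,q}^{\si_1,\si_2,\si_3}
=\sum_{n=1}^\infty\frac{\si_3^{n-1}}{(n-1/2)^q}
    h_{n-1}^{(m)}(\si_1)\,h_{n-1}^{(p)}(\si_2)\,.
\]
The product $h_{n-1}^{(m)}(\si_1)h_{n-1}^{(p)}(\si_2)$ is a double sum over $n>i,j\ge1$, which splits according to the three cases $i>j$, $i<j$, and $i=j$. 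The two strictly-ordered cases yield triple $\t$-values $\t(q,m,p;\si_3,\si_1,\si_2)$ and $\t(q,p,m;\si_3,\si_2,\si_1)$ (up to the sign factors $\si_i\si_j$ coming from the shift $n-1\to n$ in the exponents), while the diagonal $i=j$ collapses to a double $\t$-value $\t(q,m+p;\si_3,\si_1\si_2)$. This gives an identity of the schematic shape
\[
T_{m,p,q}^{\si_1,\si_2,\si_3}
=c_1\,\t(q,m,p;\ldots)+c_2\,\t(q,p,m;\ldots)+c_3\,\t(q,m+p;\ldots)\,,
\]
with explicit $\pm1$ coefficients $c_i$ built from $\si_1,\si_2,\si_3$.

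Next I would invoke the stuffle (harmonic-shuffle) product at the level of multiple $\t$-values themselves: the product $\t(m;\si_1)\cdot\t(p,q;\si_2,\si_3)$ of a single and a double $\t$-value expands into a linear combination of the depth-three values $\t(m,p,q)$, $\t(p,m,q)$, $\t(p,q,m)$ together with depth-two values obtained by contracting adjacent indices. Solving this small linear system lets me express the target $\t(m,p,q;\si_1,\si_2,\si_3)$ as a combination of: (i) the three ``$q$-leading'' triple values appearing in the expansion of $T_{m,p,q}^{\si_1,\si_2,\si_3}$, $T_{p,m,q}^{\si_2,\si_1,\si_3}$, and permutations thereof; (ii) products of lower-depth $\t$-values; and (iii) double $\t$-values. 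The crucial point is that every genuinely depth-three term on the right is of the form $\t(q,\cdot,\cdot)$ that occurs in some quadratic $T$-sum, so it is governed by Corollary \ref{Coro.Tmpq.PT}.

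The parity bookkeeping is the heart of the argument. Corollary \ref{Coro.Tmpq.PT} reduces $T_{m,p,q}^{\si_1,\si_2,\si_3}$ precisely when $w=m+p+q$ and $\tfrac{1-\si_1\si_2\si_3}{2}$ share parity; since the sign character $\si_1\si_2\si_3$ is invariant under permuting the three indices, the reducibility hypothesis is simultaneously satisfied for all the quadratic $T$-sums (and their index-permutations) that enter the stuffle decomposition. Hence, under the stated parity condition, all the depth-three contributions reduce to zeta values, Dirichlet beta values, double zeta values, double $t$-values, and classical and linear $T$-sums; the latter two families are in turn reducible by Theorem \ref{Th.lin.Tsum} and known classical results, and collapse into the allowed constants together with double $t$-values. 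The factor $2^{m+p+q}$ relating $t$-values to $\t$-values is a harmless scalar, so the statement for $t(m,p,q;\si_1,\si_2,\si_3)$ follows from that for $\t(m,p,q;\si_1,\si_2,\si_3)$. The main obstacle I anticipate is purely combinatorial rather than conceptual: carefully matching the $\pm1$ sign characters $\si_i$, the powers of $2$, and the degenerate $i=j$ contractions across the two stuffle relations, and verifying that the side condition $(p,\si_2)\neq(1,1)$ or $(q,\si_3)\neq(1,1)$ exactly excludes the divergent boundary cases where $\ze(1)$ or an ill-defined $\t(1,\ldots)$ would otherwise appear.
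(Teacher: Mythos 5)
Your proposal is correct and takes essentially the same route as the paper: the splitting of the quadratic $T$-sum and the stuffle expansion of $\t(m;\si_1)\t(p,q;\si_2,\si_3)$ that you combine are precisely the paper's identities (\ref{Tmpq.TMtV.2}) and (\ref{Tmpq.TMtV.1}), and the reduction then rests on Corollary \ref{Coro.Tmpq.PT} together with the permutation invariance of the weight and of the sign product $\si_1\si_2\si_3$, with the case $(p,\si_2)=(1,1)$, $(q,\si_3)\neq(1,1)$ handled exactly as you indicate by routing through the permuted quadratic sum. One small correction: the linear $T$-sums left over after Corollary \ref{Coro.Tmpq.PT} are absorbed as double $t$-values via Eq. (\ref{Tpq.DMtV}) rather than via Theorem \ref{Th.lin.Tsum}, whose parity hypothesis need not hold there; this is also what the paper does.
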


\begin{proof}
Note that by convention, $(m,\si_1)\neq (1,1)$. Using the harmonic shuffle product, we have
\begin{align}
\t(m,p,q;\si_1,\si_2,\si_3)
    &=\t(m;\si_1)\t(p,q;\si_2,\si_3)\nonumber\\
    &\quad-\t(m+p,q;\si_1\si_2,\si_3)-\si_1\si_2\si_3T_{m,q,p}^{\si_1,\si_3,\si_2}
        \,,\label{Tmpq.TMtV.1}
\end{align}
where $(m,\si_1)\neq(1,1)$ and $(p,\si_2)\neq(1,1)$. According to \cite[Theorems 7.1 and 7.2]{FlSa98} (see also \cite[Corollary 5.1]{XuWang19.EFES}), the classical linear Euler sums $S_{p,q}^{\si_1,\si_2}$ are expressible in terms of zeta values and double zeta values:
\[
S_{p,q}^{\si_1,\si_2}=\si_1\si_2\{\ze(q,p;\si_2,\si_1)+\ze(p+q;\si_1\si_2)\}\,.
\]
Moreover, based on Eq. (\ref{Tpq.DMtV}), the linear $T$-sums $T_{p,q}^{\si_1,\si_2}$ can be converted into double $t$-values. Thus, Corollary \ref{Coro.Tmpq.PT} asserts that, when $w$ and $\frac{1-\si_1\si_2\si_3}{2}$ have the same parity, the sums $T_{m,q,p}^{\si_1,\si_3,\si_2}$, hence the triple values $\t(m,p,q;\si_1,\si_2,\si_3)$ by Eq. (\ref{Tmpq.TMtV.1}), are reducible to zeta values, beta values, double zeta values and double $t$-values, for $(m,\si_1)\neq(1,1)$ and $(p,\si_2)\neq(1,1)$.

On the other hand, by transformations, we have
\begin{equation}\label{Tmpq.TMtV.2}
\t(m,p,q;\si_1,\si_2,\si_3)+\t(m,q,p;\si_1,\si_3,\si_2)+\t(m,p+q;\si_1,\si_2\si_3)
    =\si_1\si_2\si_3T_{p,q,m}^{\si_2,\si_3,\si_1}\,.
\end{equation}
When $w$ and $\frac{1-\si_1\si_2\si_3}{2}$ have the same parity, the sums $T_{p,q,m}^{\si_2,\si_3,\si_1}$ are reducible in the same manner. Therefore, if $(m,\si_1)\neq(1,1)$ and $(p,\si_2)=(1,1)$, but $(q,\si_3)\neq(1,1)$, by the previous discussion, the triple $\t$-values $\t(m,q,p;\si_1,\si_3,\si_2)$ are reducible, so do the triple values $\t(m,p,q;\si_1,\si_2,\si_3)$. This completes the proof.
\end{proof}

Combining with Eqs. (\ref{Tmpq.TMtV.1}) and (\ref{Tmpq.TMtV.2}) with the values of tripe $t$-values given by Hoffman \cite[Appendix A]{Hoff19.AOV}, we evaluate all the non-alternating quadratic $T$-sums of weights $5$ and $7$, which can not be covered by Theorem \ref{Th.qua.Tsum}.

\begin{example}
We have
\begin{align*}
T_{11,3}&=64\Lii_5(\tf{1}{2})-\tf{217}{4}\ze(5)-\tf{8}{15}\ln(2)^5
    +\tf{1}{2}\pi^2\ze(3)+\tf{8}{9}\pi^2\ln(2)^3+\tf{23}{90}\pi^4\ln(2)\,,\\
T_{12,2}&=\tf{93}{8}\ze(5)-\tf{1}{2}\pi^2\ze(3)+\tf{1}{12}\pi^4\ln(2)\,,\\
T_{11,5}&=\tf{569}{34}\ze(7)+\tf{512}{17}\ze(\bar{5},1,1)-\tf{448}{17}\ze(\bar{3},3,1)
    -128\ln(2)\ze(\bar{5},1)+124\ln(2)^2\ze(5)\\
&\quad+34\ln(2)\ze(3)^2+\tf{547}{102}\pi^2\ze(5)-\tf{89}{1020}\pi^4\ze(3)
    -\tf{73}{315}\pi^6\ln(2)\,,\\
T_{12,4}&=\tf{24205}{272}\ze(7)+\tf{384}{17}\ze(\bar{5},1,1)-\tf{64}{17}\ze(\bar{3},3,1)
    +14\ln(2)\ze(3)^2-\tf{57}{17}\pi^2\ze(5)\\
&\quad-\tf{2857}{6120}\pi^4\ze(3)-\tf{2}{105}\pi^6\ln(2)\,,\\
T_{13,3}&=\tf{28865}{136}\ze(7)+\tf{1344}{17}\ze(\bar{5},1,1)-\tf{224}{17}\ze(\bar{3},3,1)
    +49\ln(2)\ze(3)^2-\tf{373}{68}\pi^2\ze(5)\\
&\quad-\tf{8087}{6120}\pi^4\ze(3)-\tf{1}{15}\pi^6\ln(2)\,,\\
T_{14,2}&=-\tf{2615}{272}\ze(7)-\tf{384}{17}\ze(\bar{5},1,1)+\tf{64}{17}\ze(\bar{3},3,1)
    -14\ln(2)\ze(3)^2+\tf{7}{68}\pi^2\ze(5)\\
&\quad+\tf{791}{3060}\pi^4\ze(3)+\tf{11}{210}\pi^6\ln(2)\,,\\
T_{22,3}&=-\tf{127}{8}\ze(7)-\tf{5}{2}\pi^2\ze(5)+\tf{5}{12}\pi^4\ze(3)\,,\\
T_{23,2}&=\tf{635}{16}\ze(7)+\tf{5}{4}\pi^2\ze(5)-\tf{1}{6}\pi^4\ze(3)\,.
\end{align*}
Note that by \cite[Corollaries 4.1 and 4.2]{Hoff19.AOV}, all the non-alternating quadratic Euler $T$-sums can be expressed in terms of alternating MZVs.
\end{example}

In addition, based on the transformation formulas (\ref{Tpq.DMtV}), (\ref{Tmpq.TMtV.1}) and (\ref{Tmpq.TMtV.2}) as well as the linear and quadratic Euler $T$-sums listed in this paper, the evaluations of more Hoffman's double and triple $t$-values, which have not been involved in \cite[Appendix A]{Hoff19.AOV}, can be established.

\begin{example}
Here are some selected alternating Hoffman tripe $t$-values:
\begin{align*}
t(2,\bar{1},2)&=\tf{1}{8}\pi\Lii_4(\tf{1}{2})+\tf{1}{192}\pi\ln(2)^4
    -\tf{1}{8}\pi^2\imm(\Li_3(\tf{1}{2}+\tf{\ui}{2}))
    -\tf{1}{768}\pi^3\ln(2)^2+\tf{41}{23040}\pi^5\,,\\
t(\bar{2},\bar{1},\bar{2})&=\tf{7}{16}\ze(3)G-\tf{1}{8}\pi\Lii_4(\tf{1}{2})
    -\tf{1}{192}\pi\ln(2)^4-\tf{1}{16}\pi^2\ln(2)G
    +\tf{1}{192}\pi^3\ln(2)^2+\tf{1}{5760}\pi^5\,,\\
t(3,\bar{1},3)&=\tf{7}{16}\ze(3)\be(4)-\tf{3}{64}\pi\ze(\bar{5},1)
    +\tf{93}{1024}\pi\ln(2)\ze(5)-\tf{75}{4096}\pi\ze(3)^2\\
&\quad-\tf{7}{1024}\pi^3\ln(2)\ze(3)-\tf{391}{2580480}\pi^7\,,\\
t(\bar{3},\bar{1},\bar{3})&=+\tf{3}{64}\pi\ze(\bar{5},1)
    -\tf{93}{1024}\pi\ln(2)\ze(5)-\tf{9}{512}\pi\ze(3)^2
    -\tf{1}{128}\pi^3\Lii_4(\tf{1}{2})-\tf{1}{3072}\pi^3\ln(2)^4\\
&\quad+\tf{1}{3072}\pi^5\ln(2)^2+\tf{69}{573440}\pi^7\,,
\end{align*}
and
\begin{align*}
t(2,\bar{1},\bar{1})&=\tf{1}{4}\Lii_4(\tf{1}{2})+\tf{1}{96}\ln(2)^4
    +\tf{1}{192}\pi^2\ln(2)^2-\tf{91}{46080}\pi^4\,,\\
t(\bar{2},1,\bar{1})&=\tf{1}{4}\Lii_4(\tf{1}{2})-\tf{1}{4}G^2+\tf{1}{96}\ln(2)^4
    +\tf{1}{192}\pi^2\ln(2)^2+\tf{29}{46080}\pi^4\,,\\
t(\bar{2},\bar{1},1)&=\tf{1}{2}\Lii_4(\tf{1}{2})-\tf{1}{4}G^2+\tf{1}{48}\ln(2)^4
    +\tf{5}{192}\pi^2\ln(2)^2-\tf{23}{11520}\pi^4\,,\\
t(\bar{2},2,\bar{2})&=-\tf{1}{2}\be(4)G+\tf{7}{64}\pi\ze(3)G
    -\tf{1}{16}\pi^2\Lii_4(\tf{1}{2})-\tf{7}{128}\pi^2\ln(2)\ze(3)
    -\tf{1}{384}\pi^2\ln(2)^4\\
&\quad+\tf{1}{384}\pi^4\ln(2)^2+\tf{7}{9216}\pi^6\,,\\
t(\bar{2},4,\bar{2})&=-\tf{1}{2}\be(6)G+\tf{31}{512}\pi\ze(5)G-\tf{1}{64}\pi^2\ze(\bar{5},1)
    -\tf{3}{4096}\pi^2\ze(3)^2+\tf{3}{512}\pi^3\ze(3)G+\tf{1}{103680}\pi^8\,.
\end{align*}
Moreover, we present two representative reduction formulas:
\begin{align*}
t(\bar{4},\bar{1},1)&=\tf{1}{4}\ze(\bar{5},1)+\tf{1}{2}t(\bar{2},\bar{4})
    -\tf{31}{64}\ln(2)\ze(5)-\tf{1}{2}\be(4)G-\tf{17}{256}\ze(3)^2
    +\tf{3}{256}\pi^2\ze(3)\ln(2)\\
&\quad+\tf{1}{64}\pi^2\Lii_4(\tf{1}{2})+\tf{1}{1536}\pi^2\ln(2)^4
    +\tf{5}{1536}\pi^4\ln(2)^2+\tf{101}{161280}\pi^6\,,\\
t(5,\bar{1},1)&=-\tf{1}{2}t(\bar{6},1)+\tf{1}{2}t(\bar{2},5)+\tf{31}{64}\ze(5)G
    -\tf{1}{256}\pi\ze(\bar{5},1)+\tf{33}{8192}\pi\ze(3)^2
    -\tf{3}{1024}\ln(2)\pi^3\ze(3)\\
&\quad-\tf{1}{6144}\pi^3\ln(2)^4-\tf{13}{12288}\pi^5\ln(2)^2
    -\tf{1}{256}\pi^3\Lii_4(\tf{1}{2})-\tf{383}{5160960}\pi^7\,.
\end{align*}
The readers may try to find more evaluations and reduction formulas of the Hoffman double and triple $t$-values by the results of the present paper.\hfill\qedsymbol
\end{example}


\subsection{The fourth general identity and quadratic Euler $\S$-sums}\label{Sec.qua.Stsum}

Using the same kernel function as that of Theorem \ref{Th.qua.sum1} but different base function, we obtain the fourth identity.

\begin{theorem}\label{Th.qua.sum2}
For integers $m,p\geq1$ and $q\geq2$, the following identity on sums related to the sequences $A,B,C$ holds:
\begin{align*}
&(-1)^{m+p+q}\sum_{n=1}^\infty\frac{a_n\bar{N}\b_{n+1}(m)\bar{N}\c_{n+1}(p)}{n^q}
    +\sum_{n=1}^\infty\frac{a_nN\b_n(m)N\c_n(p)}{n^q}\\
&-(-1)^{m+p}\sum_{k=0}^{m+p-1}\binom{m+p+q-k-2}{q-1}
    \sum_{n=1}^\infty\frac{b_{n-1}c_{n-1}S\a_{n}(k+1)}{(n-1/2)^{m+p+q-k-1}}\\
&-(-1)^m\sum_{j=1}^m\sum_{k=0}^{m-j}\binom{p+j-2}{p-1}\binom{m+q-j-k-1}{q-1}
    \sum_{n=1}^\infty\frac{b_{n-1}M\c_{n-1}(p+j-1)S\a_{n}(k+1)}{(n-1/2)^{m+q-j-k}}\\
&-(-1)^p\sum_{j=1}^p\sum_{k=0}^{p-j}\binom{m+j-2}{m-1}\binom{p+q-j-k-1}{q-1}
    \sum_{n=1}^\infty\frac{c_{n-1}M\b_{n-1}(m+j-1)S\a_{n}(k+1)}{(n-1/2)^{p+q-j-k}}\\
&=-\mathcal{H}_0\,,
\end{align*}
where
\begin{align*}
\mathcal{H}_0
&=a_0(-1)^{m+p}\sum_{\substack{k_1+k_2=q\\k_1,k_2\geq 0}}
    \binom{m+k_1-1}{m-1}\binom{p+k_2-1}{p-1}\hat{t}\b(m+k_1)\hat{t}\c(p+k_2)\\
&\quad\begin{aligned}
-2(-1)^{m+p}\sum_{j=1}^{[\frac{q}{2}]}
    \sum_{\substack{k_1+k_2=q-2j\\k_1,k_2\geq 0}}
    &\binom{m+k_1-1}{m-1}\binom{p+k_2-1}{p-1}\\
    &\times D\a(2j)\hat{t}\b(m+k_1)\hat{t}\c(p+k_2)\,.
\end{aligned}
\end{align*}
\end{theorem}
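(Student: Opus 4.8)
The plan is to run the proof of Theorem \ref{Th.qua.sum1} essentially verbatim, keeping the same kernel function $\xi_2(s)$ from (\ref{kerfun.xi2}) but replacing the base function $r_1(s)=(s-\tf{1}{2})^{-q}$ by $r_2(s)=s^{-q}$. Writing $\mathcal{H}(s):=\xi_2(s)r_2(s)$, one checks that $r_2(s)=O(s^{-2})$ at infinity for $q\geq2$ while $\xi_2$ retains the growth condition of Lemma \ref{Lem.Res}, so the sum of all residues of $\mathcal{H}$ is zero. The displayed identity will then be exactly this vanishing, with the residue at the origin transposed to the right-hand side.

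First I would inventory the poles of $\mathcal{H}(s)$. The factor $\pi\cot(\pi s;A)$ is simple at every integer, and each of the two parametric digamma factors has a pole at every half-integer $s=n-\tf{1}{2}$. Since the only singularity of $r_2$ sits at the origin, the sole degeneracy is at $s=0$, where the simple pole of the cotangent merges with the order-$q$ pole of $r_2$ into a pole of order $q+1$; every nonzero integer remains simple, and every half-integer $s=n-\tf{1}{2}$ for $n\geq1$ (now including $s=\tf{1}{2}$, since none meets the base pole) is a pole of order $m+p$.

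Next I would read off the residues. At the simple poles $s=n$ and $s=-n$ for $n\geq1$, Lemmas \ref{Lem.Psi.nZ} and \ref{Lem.cot.nZ} give $a_nN\b_n(m)N\c_n(p)/n^q$ and $(-1)^{m+p+q}a_n\bar{N}\b_{n+1}(m)\bar{N}\c_{n+1}(p)/n^q$ respectively, the base now supplying $n^{-q}$ in place of $(n-\tf{1}{2})^{-q}$; here the would-be $n=0$ term of the $\bar{N}$ family is no longer a separate simple pole but is subsumed into the degenerate pole at the origin, so both series run over $n\geq1$ and reproduce the first two sums on the left directly, without the reindexing used in Theorem \ref{Th.qua.sum1}. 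At a half-integer $s=n-\tf{1}{2}$, Lemmas \ref{Lem.Psi.hn} and \ref{Lem.cot.hn} supply the order-$(m+p)$ Laurent data exactly as before, and since $s^{-q}$ expands about $n-\tf{1}{2}$ with leading coefficient $(n-\tf{1}{2})^{-q}$ (in contrast to the $(n-1)^{-q}$ produced by $(s-\tf{1}{2})^{-q}$ in Theorem \ref{Th.qua.sum1}), the three resulting sums carry precisely the denominators $(n-\tf{1}{2})^{m+p+q-k-1}$, $(n-\tf{1}{2})^{m+q-j-k}$ and $(n-\tf{1}{2})^{p+q-j-k}$ appearing in the statement.

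The one genuinely new computation, and the main obstacle, is the residue at the degenerate pole $s=0$, which must equal $\mathcal{H}_0$. Here I would substitute the near-origin expansion (\ref{cot.s.0}) of $\pi\cot(\pi s;A)$ together with two copies of (\ref{Psi.s.0}), taken with $(p,A)\mapsto(m,B)$ and with $(p,A)\mapsto(p,C)$, and extract the coefficient of $s^{q-1}$ in the triple product. Multiplying the two digamma series yields $(-1)^{m+p}\sum_{k_1,k_2\geq0}\binom{m+k_1-1}{m-1}\binom{p+k_2-1}{p-1}\hat{t}\b(m+k_1)\hat{t}\c(p+k_2)s^{k_1+k_2}$; pairing the $a_0/s$ term of the cotangent against this forces $k_1+k_2=q$ and gives the first line of $\mathcal{H}_0$, while pairing the even tail $-2\sum_jD\a(2j)s^{2j-1}$ forces $k_1+k_2=q-2j$ with $1\leq j\leq[q/2]$ and gives the second line. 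The delicate part is keeping this Cauchy-product bookkeeping consistent so that the constraint ranges and the two binomial coefficients match $\mathcal{H}_0$ exactly; once ${\rm Res}(\mathcal{H}(s),0)=\mathcal{H}_0$ is confirmed, Lemma \ref{Lem.Res} delivers the stated identity.
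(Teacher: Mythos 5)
Your proposal is correct and follows essentially the same route as the paper, whose proof is exactly this: apply the kernel $\xi_2(s)$ of (\ref{kerfun.xi2}) to the base function $r_2(s)=s^{-q}$, identify $\mathcal{H}_0$ with ${\rm Res}(\mathcal{H}(s),0)$, and read off the remaining terms as the combined contributions of the poles at $s=\pm n$ and $s=n-\tfrac{1}{2}$. Your detailed residue bookkeeping (the non-degenerate half-integer poles including $s=\tfrac{1}{2}$, the absence of reindexing in the integer sums, and the Cauchy-product extraction of the coefficient of $s^{q-1}$ at the origin) is accurate and simply fills in what the paper leaves implicit.
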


\begin{proof}
The theorem results from applying the kernel function $\xi_2(s)$ in (\ref{kerfun.xi2}) to the base function $r_2(s)=s^{-q}$, and performing the residue computation. If we define $\mathcal{H}(s)=\xi_2(s)r_2(s)$, then the quantity $\mathcal{H}_0$ equals ${\rm Res}(\mathcal{H}(s),0)$, and the other quantities of the general identity represent combined contributions of the poles at $s=\pm n$ and $s=n-1/2$.
\end{proof}

By using the linear sums $R_{p,q}^{\si_1,\si_2}$ discussed in Section \ref{Sec.lin.Rsum}, we further denote
\begin{align*}
&\begin{aligned}
\mathcal{H}_1&=-\si_1\si_2\si_3(-1)^m\de_{\si_2}^{p+q}\t(m;\si_1)\S_{p,q}^{\si_2,\si_3}
    -\si_1\si_2\si_3(-1)^p\de_{\si_1}^{m+q}\t(p;\si_2)\S_{m,q}^{\si_1,\si_3}\\
&\quad-\si_1\si_2(-1)^{m+p}(1+(-1)^q)\t(m;\si_1)\t(p;\si_2)\ze(q;\si_3)\,,
\end{aligned}\\
&\begin{aligned}
\mathcal{H}_2=\si_1\si_2(-1)^{m+p}\sum_{k=0}^{m+p-1}
    &\binom{m+p+q-k-2}{q-1}\de_{\si_1\si_2\si_3}^k\\
    &\times\t(k+1;\si_1\si_2\si_3)\t(m+p+q-k-1;\si_3)\,,
\end{aligned}\\
&\begin{aligned}
\mathcal{H}_3&=\si_1\si_3(-1)^m\sum_{j=1}^m\sum_{k=0}^{m-j}
    \binom{p+j-2}{p-1}\binom{m+q-j-k-1}{q-1}\de_{\si_1\si_2\si_3}^k\t(k+1;\si_1\si_2\si_3)\\
&\qquad\qquad\qquad\qquad\qquad\times\bibb{
    \begin{array}{c}
        R_{p+j-1,m+q-j-k}^{\si_2,\si_3}\\
        -\si_2\si_3(-1)^{p+j}\ze(p+j-1;\si_2)\t(m+q-j-k;\si_3)
    \end{array}
}\\
&\quad+\si_2\si_3(-1)^p\sum_{j=1}^p\sum_{k=0}^{p-j}
    \binom{m+j-2}{m-1}\binom{p+q-j-k-1}{q-1}\de_{\si_1\si_2\si_3}^k\t(k+1;\si_1\si_2\si_3)\\
&\qquad\qquad\qquad\qquad\qquad\times\bibb{
    \begin{array}{c}
        R_{m+j-1,p+q-j-k}^{\si_1,\si_3}\\
        -\si_1\si_3(-1)^{m+j}\ze(m+j-1;\si_1)\t(p+q-j-k;\si_3)
    \end{array}
}\,,
\end{aligned}\\
&\begin{aligned}
\mathcal{H}_4=-\si_1\si_2(-1)^{m+p}\sum_{\substack{k_1+k_2=q\\k_1,k_2\geq 0}}
    \binom{m+k_1-1}{m-1}\binom{p+k_2-1}{p-1}\t(m+k_1;\si_1)\t(p+k_2;\si_2)\,,
\end{aligned}\\
&\begin{aligned}
\mathcal{H}_5=2\si_1\si_2(-1)^{m+p}\sum_{j=1}^{[\frac{q}{2}]}
    \sum_{\substack{k_1+k_2=q-2j\\k_1,k_2\geq 0}}
&\binom{m+k_1-1}{m-1}\binom{p+k_2-1}{p-1}\\
&\times\ze(2j;\si_1\si_2\si_3)\t(m+k_1;\si_1)\t(p+k_2;\si_2)\,.
\end{aligned}
\end{align*}
Then the following result on the quadratic Euler $\S$-sums holds.

\begin{theorem}\label{Th.qua.Stsum}
The quadratic Euler $\S$-sums $\S_{m,p,q}^{\si_1,\si_2,\si_3}$ satisfy
\[
\si_1\si_2\si_3\de_{\si_1\si_2}^{m+p+q-1}\S_{m,p,q}^{\si_1,\si_2,\si_3}
    =\mathcal{H}_1+\cdots+\mathcal{H}_5\,,
\]
where $\mathcal{H}_i$, for $i=1,\ldots,5$, are defined as above.
\end{theorem}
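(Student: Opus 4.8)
The plan is to deduce the theorem from the fourth general identity of Theorem~\ref{Th.qua.sum2} by the same specialization procedure already used to pass from Theorem~\ref{Th.qua.sum1} to Theorem~\ref{Th.qua.Tsum}. I would let $(A,B,C)$ range over $\{A_1,A_2\}^3$, where $A$ governs the outer sign $\si_3$ and $B,C$ govern the inner signs $\si_1,\si_2$, and substitute the closed evaluations of $N_n^{(A_i)}(j)$, $\bar{N}_n^{(A_i)}(j)$, $M_n^{(A_i)}(j)$, $S_n^{(A_i)}(j)$, $D^{(A_i)}(j)$, $\hat{t}^{(A_i)}(j)$ and $\t^{(A_i)}(j)$ collected in Section~\ref{Sec.Notation.Exp}. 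It is cleanest to run the computation first on the representative case $A=B=C=A_1$: there $N_n^{(B)}(m)=h_n^{(m)}+(-1)^m\t(m)$ and $N_n^{(C)}(p)=h_n^{(p)}+(-1)^p\t(p)$, so the product expands into $h_n^{(m)}h_n^{(p)}$ plus the cross terms $(-1)^p\t(p)h_n^{(m)}$, $(-1)^m\t(m)h_n^{(p)}$ and $(-1)^{m+p}\t(m)\t(p)$. Summed against the base $n^{-q}$, the $hh$-part gives the quadratic sum $\S_{m,p,q}$ while the cross terms reproduce the linear $\S$-sums and the product $\t(m)\t(p)\ze(q)$ recorded in $\mathcal{H}_1$. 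The companion $\bar N\bar N$-sum contributes the same quadratic sum with sign $(-1)^{m+p+q}$, so the two leading sums jointly yield the coefficient $1+(-1)^{m+p+q}$, which is the $\si_1=\si_2=1$ instance of $\de_{\si_1\si_2}^{m+p+q-1}$.

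Next I would track the three subtracted sums. The sum built from $S_n^{(A)}(k+1)$ with weights $b_{n-1}c_{n-1}$ collapses into products of a $\t$-value and a $\ze$-value, giving $\mathcal{H}_2$. The two double sums containing $M_{n-1}^{(C)}(p+j-1)$ and $M_{n-1}^{(B)}(m+j-1)$ are the decisive ones: each $M_{n-1}^{(C)}(r)$ splits into a classical-harmonic part $H_{n-1}^{(r)}$ and a constant $\ze(r)$ part, so that the $H_{n-1}$-contribution, weighted by $(n-1/2)^{-(\cdots)}$, is precisely a linear Euler $R$-sum of the kind introduced in Section~\ref{Sec.lin.Rsum}, while the $\ze$-contribution furnishes the subtracted $\ze\cdot\t$ correction. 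This is how the linear $R$-sums enter $\mathcal{H}_3$, which is the structural feature distinguishing the $\S$-sum identity from the $T$-sum one. Finally, the origin residue $\mathcal{H}_0$ separates into its $j=0$ part $\mathcal{H}_4$ and its $D^{(A)}(2j)$ parts $\mathcal{H}_5$ once $D^{(A_1)}(2j)=\ze(2j)$, $D^{(A_2)}(2j)=\ze(\ol{2j})$ and the values of $\hat{t}^{(A_i)}$ are inserted. Repeating the computation for the remaining members of $\{A_1,A_2\}^3$ and folding the individual sign patterns into one formula via $\de_\si^k=1-\si(-1)^k$ then yields the stated identity, with $\si_1\si_2\si_3$ collecting the overall sign of the specializations.

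The hard part, exactly as in the quadratic $T$-sum case, is the bookkeeping of the spurious $\de_{j,1}\ln(2)$ terms and the extra signs $(-1)^{n-1}$ injected by every substitution $B,C\mapsto A_2$ through $N_n^{(A_2)}$, $\bar{N}_n^{(A_2)}$, $M_n^{(A_2)}$, $\hat{t}^{(A_2)}$ and $\t^{(A_2)}$, together with the degenerate instances $m=1$ or $p=1$ in which some $\ze$- or $\t$-arguments collapse. I would resolve these as in the proof of Theorem~\ref{Th.qua.Tsum}: separate the four cases according to whether $m$ and $p$ equal $1$, and feed back the already-established linear evaluations --- Theorem~\ref{Th.lin.Stsum} for the linear $\S$-sums and identity~\eqref{Rpq} together with~\eqref{R1q} for the $R$-sums --- to absorb the stray $\ln(2)$ and Kronecker-delta terms into the uniform expression. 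A concluding verification disposes of the boundary value $q=1$ for the sums with $\si_3=-1$.
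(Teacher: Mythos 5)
Your proposal is correct and takes essentially the same route as the paper: the paper's proof likewise specializes Theorem \ref{Th.qua.sum2} over $(A,B,C)\in\{A_1,A_2\}^3$ (with $A$ carrying $\si_3$ and $B,C$ carrying $\si_1,\si_2$), substitutes the evaluations from Section \ref{Sec.Notation.Exp}, absorbs the Kronecker-delta and $\ln(2)$ debris by feeding back the linear $\S$-sum results of Theorem \ref{Th.lin.Stsum} (splitting cases by whether $m,p$ equal $1$ and, for $(m,p)=(1,1)$, by the parity of $q$), and finishes by checking the boundary case $q=1$, $\si_3=-1$. One descriptive slip worth noting: since the base denominators in the fourth identity are powers of $(n-1/2)$, the sum built from $S_n^{(A)}(k+1)$ collapses to a product of two $\t$-values, exactly as recorded in $\mathcal{H}_2$, not a $\t$-value times a $\ze$-value --- the latter pattern belongs to the $T$-sum counterpart $\mathcal{G}_3$, where the base denominators are powers of $n$.
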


\begin{proof}
For $m,p\geq1$ and $q\geq2$, setting $(A,B,C)$ by $(A_1,A_1,A_1)$ and $(A_2,A_1,A_1)$ in Theorem \ref{Th.qua.sum2} gives immediately the results of $\S_{mp,q}$ and $\S_{mp,\bar{q}}$, which can be covered by this theorem.

Setting $(A,B,C)$ by $(A_1,A_2,A_2)$ and $(A_2,A_2,A_2)$ yields the expressions of $\S_{\bar{m}\bar{p},q}$ and $\S_{\bar{m}\bar{p},\bar{q}}$. Note that when eliminating the terms involving the Kronecker delta, we should use the results of $\S_{\bar{p},\bar{q}}$ and $\S_{\bar{p},q}$ respectively, and discuss the details according to the values of $m$ and $p$. In particular, if $(m,p)=(1,1)$, the simplification process also relies on the parity of $q$. Similarly, using the results of $\S_{p,\bar{q}}$ and $\S_{p,\bar{q}}$ to eliminate the terms on the Kronecker delta, we obtain the expressions of the other quadratic Euler $\S$-sums.

Finally, it can be verified that the conclusion also holds when $q=1$ and $\si_3=-1$.
\end{proof}

From Theorem \ref{Th.qua.Stsum}, we can obtain immediately the following parity theorem of the quadratic Euler $\S$-sums.

\begin{corollary}\label{Coro.Stmpq.PT}
The quadratic Euler $\S$-sums $\S_{m,p,q}^{\si_1,\si_2,\si_3}$ reduce to combinations of zeta values, the Dirichlet beta values, the linear Euler $R$-sums and the linear Euler $\S$-sums, whenever the weight $w=m+p+q$ and the quantity $\frac{1-\si_1\si_2}{2}$ are of the same parity.
\end{corollary}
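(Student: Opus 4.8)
The plan is to obtain the corollary directly from the explicit evaluation in Theorem~\ref{Th.qua.Stsum}, the parity hypothesis entering only to make the scalar prefactor on the left invertible. That theorem states $\si_1\si_2\si_3\de_{\si_1\si_2}^{m+p+q-1}\S_{m,p,q}^{\si_1,\si_2,\si_3}=\mathcal{H}_1+\cdots+\mathcal{H}_5$, so the whole question is whether the coefficient $\de_{\si_1\si_2}^{m+p+q-1}=1-\si_1\si_2(-1)^{m+p+q-1}$ is nonzero.

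First I would determine exactly when this happens. Writing $w=m+p+q$, the factor $\de_{\si_1\si_2}^{w-1}$ equals $2$ precisely when $\si_1\si_2=(-1)^{w}$ and vanishes otherwise. A short case check identifies $\si_1\si_2=(-1)^{w}$ with the stated parity condition: if $\si_1\si_2=1$ then $\tfrac{1-\si_1\si_2}{2}=0$ is even and $(-1)^{w}=1$ forces $w$ even, while if $\si_1\si_2=-1$ then $\tfrac{1-\si_1\si_2}{2}=1$ is odd and $(-1)^{w}=-1$ forces $w$ odd. Under the hypothesis the left-hand coefficient is therefore $2\si_1\si_2\si_3\neq 0$, and dividing yields $\S_{m,p,q}^{\si_1,\si_2,\si_3}=\tfrac{1}{2\si_1\si_2\si_3}(\mathcal{H}_1+\cdots+\mathcal{H}_5)$.

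It then remains to inspect the summands of $\mathcal{H}_1,\ldots,\mathcal{H}_5$ and confirm that each belongs to the claimed list. Every single-argument value appearing there reduces to a zeta value or a Dirichlet beta value through $\t(s;1)=(2^s-1)\ze(s)$, $\t(s;-1)=-2^s\be(s)$, $\ze(s;1)=\ze(s)$ and $\ze(s;-1)=(2^{1-s}-1)\ze(s)$ (with the conventions $\ze(1):=0$ and $\t(1):=2\ln(2)$), so the products of such factors making up $\mathcal{H}_2$, $\mathcal{H}_4$, $\mathcal{H}_5$ and the scalar parts of $\mathcal{H}_1$ and $\mathcal{H}_3$ are combinations of zeta and beta values. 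The only compound sums that persist are the linear Euler $\S$-sums $\S_{p,q}^{\si_2,\si_3}$ and $\S_{m,q}^{\si_1,\si_3}$ inside $\mathcal{H}_1$ and the linear Euler $R$-sums $R_{p+j-1,m+q-j-k}^{\si_2,\si_3}$ and $R_{m+j-1,p+q-j-k}^{\si_1,\si_3}$ inside $\mathcal{H}_3$, which are exactly the two families named in the statement. The work here is essentially bookkeeping, and the single point that requires real care is verifying that this inspection is exhaustive, i.e.\ that no object of degree two or higher---no quadratic sum and no double zeta or double $t$-value---survives on the right; this holds because every compound sum in the $\mathcal{H}_i$ has degree at most one while every other factor carries a single argument, and with that check the corollary follows.
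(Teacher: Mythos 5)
Your proof is correct and takes essentially the same route as the paper, which deduces this corollary immediately from Theorem \ref{Th.qua.Stsum}: the parity hypothesis is exactly the condition that the prefactor $\de_{\si_1\si_2}^{m+p+q-1}$ equals $2$ rather than $0$, and one then inspects $\mathcal{H}_1,\ldots,\mathcal{H}_5$ to see that the only compound sums surviving on the right are the linear $\S$-sums (in $\mathcal{H}_1$) and the linear $R$-sums (in $\mathcal{H}_3$), all remaining factors being single $\t$- or $\ze$-values that reduce to zeta and Dirichlet beta values.
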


Moreover, in the specific case of $m=p=1$, the next result holds.

\begin{corollary}\label{Coro.St11q}
The quadratic Euler $\S$-sums $\S_{1,1,q}^{\si_1,\si_2,\si_3}$ are expressible in terms of $\pi$, $G$, zeta values $\ze(3),\ze(5),\ldots$, and the Dirichlet beta values $\be(4),\be(6),\ldots$, whenever the parameter $q$ and the quantity $\frac{1-\si_1\si_2}{2}$ are of the same parity, and $\si_1\si_2\si_3=1$.
\end{corollary}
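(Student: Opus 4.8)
The plan is to specialize Theorem~\ref{Th.qua.Stsum} to $m=p=1$ and let the two hypotheses collapse every non-reducible ingredient on its right-hand side. With $m=p=1$ the weight is $w=q+2$, and the coefficient multiplying $\S_{1,1,q}^{\si_1,\si_2,\si_3}$ on the left is $\si_1\si_2\si_3\,\de_{\si_1\si_2}^{q+1}$, where $\de_{\si_1\si_2}^{q+1}=1+\si_1\si_2(-1)^q$. The parity hypothesis, that $q$ and $\tfrac{1-\si_1\si_2}{2}$ share parity, says precisely $\si_1\si_2=(-1)^q$, whence $\de_{\si_1\si_2}^{q+1}=2\neq0$; thus Theorem~\ref{Th.qua.Stsum} genuinely evaluates $\S_{1,1,q}^{\si_1,\si_2,\si_3}$ rather than producing a trivial relation. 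It remains to show that $\mathcal{H}_1+\cdots+\mathcal{H}_5$ at $m=p=1$ introduces no constants beyond $\pi$, $G$, the odd zeta values and the even beta values.

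First I would eliminate the linear $R$-sums. When $m=p=1$ the double sums in $\mathcal{H}_3$ collapse to the single index $j=1$, $k=0$, so the only $R$-sums that could occur are $R_{1,q}^{\si_2,\si_3}$ and $R_{1,q}^{\si_1,\si_3}$; but every summand of $\mathcal{H}_3$ then carries the factor $\de_{\si_1\si_2\si_3}^{0}=1-\si_1\si_2\si_3$, which vanishes under the second hypothesis $\si_1\si_2\si_3=1$. Hence $\mathcal{H}_3\equiv0$ and all $R$-sums disappear.

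Next I would examine the linear $\S$-sums, which appear only in $\mathcal{H}_1$, namely $\S_{1,q}^{\si_2,\si_3}$ and $\S_{1,q}^{\si_1,\si_3}$ with respective coefficients $\de_{\si_2}^{q+1}=1+\si_2(-1)^q$ and $\de_{\si_1}^{q+1}=1+\si_1(-1)^q$. A surviving $\S$-sum forces its own first sign to satisfy $\si_i=(-1)^q$; and $\si_i=(-1)^q$ is exactly the parity under which Corollaries~\ref{Coro.Stpq.odd} and~\ref{Coro.Stpq.even} reduce the weight-$(1+q)$ sum $\S_{1,q}^{\si_i,\si_3}$ to $\pi$, $G$, odd zeta values and even beta values. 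So every $\S$-sum that does not drop out is already of the admissible form. All remaining pieces of $\mathcal{H}_1$, $\mathcal{H}_2$, $\mathcal{H}_4$ and $\mathcal{H}_5$ are products of single-index values $\t(j;\pm1)$ and $\ze(j;\pm1)$, each of which is a power of $\pi$, an even beta value (including $G$), an odd zeta value, or a multiple of $\ln(2)$.

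The one genuinely technical point is the disappearance of these $\ln(2)$'s, since the statement permits none. They enter through $\t(1;1)=2\ln(2)$ (and through $\ze(\bar1)=-\ln(2)$): an $\ln^2(2)$ contribution sits in the last term of $\mathcal{H}_1$ when $q$ is even, while single factors of $\ln(2)$ sit in the boundary summands ($k_i=0$) of $\mathcal{H}_4$ and $\mathcal{H}_5$ and in the $\t(1;\cdot)$-weighted surviving $\S$-sum terms of $\mathcal{H}_1$. I expect a cancellation in two layers: the $\ln^2(2)$ term of $\mathcal{H}_1$ against the extreme summand $j=[q/2]$, $k_1=k_2=0$ of $\mathcal{H}_5$, and the mixed $\ln(2)\cdot(\text{zeta or beta})$ terms among the $k_i=0$ summands of $\mathcal{H}_4$, $\mathcal{H}_5$ and the reduced surviving $\S$-sum terms of $\mathcal{H}_1$. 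Checking this matching explicitly — the same phenomenon that removes $\ln(2)$ in the proofs of Corollaries~\ref{Coro.Stpq.odd} and~\ref{Coro.Stpq.even} — is the main obstacle; once it is verified, division by $\de_{\si_1\si_2}^{q+1}=2$ delivers the asserted reduction.
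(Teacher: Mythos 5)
Your proposal is correct and follows essentially the same route as the paper: specialize Theorem~\ref{Th.qua.Stsum} to $m=p=1$, use $\si_1\si_2=(-1)^q$ to make the left-hand coefficient $\de_{\si_1\si_2}^{q+1}=2$ nontrivial, use $\si_1\si_2\si_3=1$ to annihilate every summand of $\mathcal{H}_3$ (hence all $R$-sums), and observe that any surviving linear $\S$-sum has first sign $(-1)^q$ and is therefore covered by Corollaries~\ref{Coro.Stpq.odd} and~\ref{Coro.Stpq.even}. The one step you leave as ``expected'' is precisely the step the paper carries out, and it is worth recording how it is done there, since it closes your gap without any coefficient-matching: instead of first reducing $\S_{1,q}^{\si_i,\si_3}$ to constants, the paper substitutes into $\mathcal{H}_1$ the \emph{unreduced} expression of $\S_{1,q}^{\si_i,\si_3}$ supplied by Theorem~\ref{Th.lin.Stsum} with $p=1$. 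That expression consists, term by term, of the negatives of the $k_i=0$ boundary summands of $\mathcal{H}_4$ and $\mathcal{H}_5$ (after the left-hand normalization), so all terms carrying $\t(1)=2\ln(2)$ cancel identically, while the $-\t(1)^2\ze(q)$ piece of $\mathcal{H}_1$ cancels against the $j=q/2$, $k_1=k_2=0$ summand of $\mathcal{H}_5$ --- exactly the two layers you predict; this is what the paper means by ``the expression of $\S_{1,q}$ is used to eliminate the terms on $\t(1)$''. Like you, the paper writes out only one sign pattern ($\si_1=\si_2=\si_3=1$, $q$ even) and declares the others similar; note that in the case $\si_1=\si_2=-1$, $q$ even, the issue is vacuous, since $\de_{\si_2}^{q+1}=0$ then kills the $\S$-sum terms of $\mathcal{H}_1$ and $\t(\bar{1})=-\pi/2$ carries no $\ln(2)$, while in the mixed case ($q$ odd, $\si_3=-1$) the same substitution works with $\S_{\bar{1},\bar{q}}$ in place of $\S_{1,q}$.
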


\begin{proof}
Under these two conditions, we obtain from Theorem \ref{Th.qua.Stsum} the expression of $\S_{11,q}$, where $q$ is even:
\begin{align*}
\S_{11,q}
&=2\t(1)\S_{1,q}-\t(1)^2\ze(q)+\t(2)\t(q)
    -\frac{1}{2}\sum_{\substack{k_1+k_2=q\\k_1,k_2\geq0}}\t(k_1+1)\t(k_2+1)\\
&\quad+\sum_{j=1}^{q/2}\sum_{\substack{k_1+k_2=q-2j\\k_1,k_2\geq0}}\ze(2j)\t(k_1+1)\t(k_2+1)\\
&=\t(2)\t(q)-\frac{1}{2}\sum_{\substack{k_1+k_2=q\\k_1,k_2\geq1}}\t(k_1+1)\t(k_2+1)
    +\sum_{j=1}^{q/2-1}\sum_{\substack{k_1+k_2=q-2j\\k_1,k_2\geq1}}\ze(2j)\t(k_1+1)\t(k_2+1)\,.
\end{align*}
Note that in the last step, the expression of $\S_{1,q}$ is used to eliminate the terms on $\t(1)$. Thus, the final expression of $\S_{11,q}$, where $q$ is even, does not involve the terms on $\t(1)$ and $\ze(\bar{1})$, and coincides with the statement of the corollary. The similar discussion also holds for $\S_{1\bar{1},\bar{q}}$ where $q$ is odd, and $\S_{1\bar{1},\bar{q}}$ where $q$ is even.
\end{proof}

Here are some special quadratic Euler $\S$-sums which satisfy the conditions of Corollary \ref{Coro.St11q}.

\begin{example}
We have
\begin{align*}
\S_{1\bar{1},\bar{1}}&=\tf{1}{8}\pi^3\,,\\
\S_{1\bar{1},\bar{3}}&=-14\ze(3)G+\tf{1}{16}\pi^5\,,\\
\S_{1\bar{1},\bar{5}}&=-62\ze(5)G-56\ze(3)\be(4)+\tf{14}{3}\pi^2\ze(3)G+\tf{5}{192}\pi^7\,,\\
\S_{1\bar{1},\bar{7}}&=
    -254\ze(7)G-248\ze(5)\be(4)-224\ze(3)\be(6)\\
&\quad+\tf{62}{3}\pi^2\ze(5)G
    +\tf{56}{3}\pi^2\ze(3)\be(4)+\tf{14}{45}\pi^4\ze(3)G+\tf{61}{5760}\pi^9\,,\\
\S_{1\bar{1},\bar{9}}&=-1022\ze(9)G-1016\ze(7)\be(4)
    -992\ze(5)\be(6)-896\ze(3)\be(8)\\
&\quad+\tf{254}{3}\pi^2\ze(7)G+\tf{248}{3}\pi^2\ze(5)\be(4)
    +\tf{224}{3}\pi^2\ze(3)\be(6)
    +\tf{62}{45}\pi^4\ze(5)G+\tf{56}{45}\pi^4\ze(3)\be(4)\\
&\quad+\tf{4}{135}\pi^6\ze(3)G+\tf{277}{64512}\pi^{11}\,,
\end{align*}
and
\begin{align*}
\S_{11,2}&=\tf{1}{8}\pi^4\,,\\
\S_{11,4}&=-\tf{49}{2}\ze(3)^2+\tf{1}{24}\pi^6\,,\\
\S_{11,6}&=-217\ze(3)\ze(5)+\tf{49}{6}\pi^2\ze(3)^2+\tf{1}{60}\pi^8\,,\\
\S_{11,8}&=-889\ze(3)\ze(7)-\tf{961}{2}\ze(5)^2+\tf{217}{3}\pi^2\ze(3)\ze(5)
    +\tf{49}{90}\pi^4\ze(3)^2+\tf{17}{2520}\pi^{10}\,,\\
\S_{\bar{1}\bar{1},2}&=-8G^2+\tf{1}{8}\pi^4\,,\\
\S_{\bar{1}\bar{1},4}&=-64\be(4)G+\tf{8}{3}\pi^2G^2+\tf{1}{24}\pi^6\,,\\
\S_{\bar{1}\bar{1},6}&=-256\be(6)G-128\be(4)^2
    +\tf{64}{3}\pi^2\be(4)G+\tf{8}{45}\pi^4G^2+\tf{1}{60}\pi^8\,,\\
\S_{\bar{1}\bar{1},8}&=-1024\be(8)G-1024\be(4)\be(6)+\tf{256}{3}\pi^2\be(6)G
    +\tf{128}{3}\pi^2\be(4)^2+\tf{64}{45}\pi^4\be(4)G\\
    &\quad+\tf{16}{945}\pi^6G^2+\tf{17}{2520}\pi^{10}\,,
\end{align*}
where the evaluation of $\S_{11,2}$ has been shown in different ways in \cite[Eq. (13)]{BorBor95.OAI}, \cite[Eq. (3.3b)]{Chu97.HSR} and \cite[Eq. (22)]{DeDo91}.\hfill\qedsymbol
\end{example}

By Theorem \ref{Th.qua.Stsum} as well as the evaluations of corresponding linear $\S$-sums and $R$-sums listed in Sections \ref{Sec.lin.Stsum}--\ref{Sec.lin.Rsum}, more quadratic $\S$-sums can be determined directly:

\begin{example}
For even weights, the following quadratic $\S$-sums are reducible to known constants:
\begin{align*}
\S_{12,3}&=49\ze(3)^2-\tf{1}{16}\pi^6\,,\\
\S_{13,2}&=-16\pi^2\Lii_4(\tf{1}{2})-14\pi^2\ln(2)\ze(3)-\tf{2}{3}\pi^2\ln(2)^4
    +\tf{2}{3}\pi^4\ln(2)^2+\tf{151}{720}\pi^6\,,\\
\S_{22,2}&=-98\ze(3)^2+32\pi^2\Lii_4(\tf{1}{2})+28\pi^2\ln(2)\ze(3)
    +\tf{4}{3}\pi^2\ln(2)^4-\tf{4}{3}\pi^4\ln(2)^2-\tf{61}{360}\pi^6\,,\\
\S_{12,5}&=651\ze(3)\ze(5)-\tf{343}{12}\pi^2\ze(3)^2-\tf{1}{24}\pi^8\,,\\
\S_{13,4}&=-651\ze(3)\ze(5)-64\pi^2\ze(\bar{5},1)+\tf{97}{2}\pi^2\ze(3)^2
    +\tf{41}{2520}\pi^8\,,\\
\S_{14,3}&=434\ze(3)\ze(5)+96\pi^2\ze(\bar{5},1)-\tf{193}{4}\pi^2\ze(3)^2
    +\tf{29}{1680}\pi^8\,,\\
\S_{15,2}&=-64\pi^2\ze(\bar{5},1)+\tf{55}{2}\pi^2\ze(3)^2-\tf{16}{3}\pi^4\Lii_4(\tf{1}{2})
    -\tf{14}{3}\pi^4\ln(2)\ze(3)-\tf{2}{9}\pi^4\ln(2)^4\\
    &\quad+\tf{2}{9}\pi^6\ln(2)^2+\tf{493}{15120}\pi^8\,,\\
\S_{22,4}&=-1736\ze(3)\ze(5)+128\pi^2\ze(\bar{5},1)+\tf{101}{3}\pi^2\ze(3)^2
    +\tf{443}{2520}\pi^8\,,\\
\S_{23,3}&=1302\ze(3)\ze(5)-96\pi^2\ze(\bar{5},1)-\tf{3}{4}\pi^2\ze(3)^2
    -\tf{137}{840}\pi^8\,,\\
\S_{24,2}&=-868\ze(3)\ze(5)+64\pi^2\ze(\bar{5},1)-\tf{27}{2}\pi^2\ze(3)^2
    +\tf{16}{3}\pi^4\Lii_4(\tf{1}{2})+\tf{14}{3}\pi^4\ln(2)\ze(3)\\
    &\quad+\tf{2}{9}\pi^4\ln(2)^4-\tf{2}{9}\pi^6\ln(2)^2+\tf{1307}{15120}\pi^8\,,\\
\S_{33,2}&=-28\pi^2\ze(3)^2+\tf{3}{56}\pi^8\,,
\end{align*}
and
\begin{align*}
\S_{11,\bar{2}}&=8\pi\imm(\Li_3(\tf{1}{2}+\tf{\ui}{2}))
    +4\pi\ln(2)G-\tf{1}{4}\pi^2\ln(2)^2-\tf{3}{16}\pi^4\,,\\
\S_{12,\bar{1}}&=-4\pi\imm(\Li_3(\tf{1}{2}+\tf{\ui}{2}))
    -2\pi\ln(2)G+\tf{1}{8}\pi^2\ln(2)^2+\tf{5}{32}\pi^4\,,\\
\S_{\bar{1}\bar{1},\bar{2}}&=8G^2-8\pi\imm(\Li_3(\tf{1}{2}+\tf{\ui}{2}))
    -4\pi\ln(2)G+\tf{1}{4}\pi^2\ln(2)^2+\tf{3}{16}\pi^4\,,\\
\S_{\bar{1}\bar{2},\bar{1}}&=4\pi\imm(\Li_3(\tf{1}{2}+\tf{\ui}{2}))
    +2\pi\ln(2)G-\tf{1}{8}\pi^2\ln(2)^2-\tf{1}{32}\pi^4\,,\\
\S_{\bar{1}\bar{2},3}&=96\be(4)G+14\pi\ze(3)G-8\pi^2G^2-\tf{1}{16}\pi^6\,,\\
\S_{\bar{1}\bar{3},2}&=-96\be(4)G-8\pi^2\Lii_4(\tf{1}{2})-7\pi^2\ln(2)\ze(3)
    +4\pi^2G^2-\tf{1}{3}\pi^2\ln(2)^4\\
    &\quad+\tf{1}{3}\pi^4\ln(2)^2+\tf{241}{1440}\pi^6\,,\\
\S_{\bar{2}\bar{2},2}&=-56\pi\ze(3)G+16\pi^2\Lii_4(\tf{1}{2})+14\pi^2\ln(2)\ze(3)
    +16\pi^2G^2+\tf{2}{3}\pi^2\ln(2)^4\\
    &\quad-\tf{2}{3}\pi^4\ln(2)^2-\tf{61}{720}\pi^6\,.
\end{align*}
A quick glance through this list yields two elegant symmetric series:
\[
\sum_{n=1}^{\infty}(-1)^{n-1}\frac{h_n^2+\bar{h}_n^2}{n^2}
    =\S_{11,\bar{2}}+\S_{\bar{1}\bar{1},\bar{2}}=8G^2\,,
\]
and
\[
\sum_{n=1}^{\infty}(-1)^{n-1}\frac{h_nh_n^{(2)}+\bar{h}_n\bar{h}_n^{(2)}}{n}
    =\S_{12,\bar{1}}+\S_{\bar{1}\bar{2},\bar{1}}=\frac{1}{8}\pi^4\,.
    \hfill\qedsymbol
\]
\end{example}

\begin{example}
For odd weights, the following quadratic $\S$-sums are expressible in terms of known constants:
\begin{align*}
\S_{1\bar{2},\bar{2}}&=8\pi G^2+4\pi^2\imm(\Li_3(\tf{1}{2}+\tf{\ui}{2}))
    +2\pi^2\ln(2)G-\tf{1}{8}\pi^3\ln(2)^2-\tf{5}{32}\pi^5\,,\\
\S_{1\bar{3},\bar{1}}&=-8\pi^2\imm(\Li_3(\tf{1}{2}+\tf{\ui}{2}))
    -4\pi^2\ln(2)G+\tf{1}{4}\pi^3\ln(2)^2+\tf{1}{4}\pi^5\,,\\
\S_{2\bar{1},\bar{2}}&=28\ze(3)G-4\pi^2\imm(\Li_3(\tf{1}{2}+\tf{\ui}{2}))
    -2\pi^2\ln(2)G+\tf{1}{8}\pi^3\ln(2)^2+\tf{1}{32}\pi^5\,,\\
\S_{2\bar{2},\bar{1}}&=-8\pi G^2+12\pi^2\imm(\Li_3(\tf{1}{2}+\tf{\ui}{2}))
    +6\pi^2\ln(2)G-\tf{3}{8}\pi^3\ln(2)^2-\tf{7}{32}\pi^5\,,\\
\S_{3\bar{1},\bar{1}}&=-4\pi^2\imm(\Li_3(\tf{1}{2}+\tf{\ui}{2}))
    -2\pi^2\ln(2)G+\tf{1}{8}\pi^3\ln(2)^2+\tf{5}{32}\pi^5\,,
\end{align*}
and
\begin{align*}
&\S_{1\bar{1},3}=14\ze(3)G-4\pi G^2\,,\\
&\S_{1\bar{2},2}=-8\pi\Lii_4(\tf{1}{2})-7\pi\ln(2)\ze(3)+4\pi G^2-\tf{1}{3}\pi\ln(2)^4
    +\tf{1}{3}\pi^3\ln(2)^2+\tf{151}{1440}\pi^5\,,\\
&\S_{2\bar{1},2}=-28\ze(3)G+8\pi\Lii_4(\tf{1}{2})+7\pi\ln(2)\ze(3)
    +4\pi G^2+\tf{1}{3}\pi\ln(2)^4-\tf{1}{3}\pi^3\ln(2)^2+\tf{29}{1440}\pi^5\,.
\end{align*}
Finally, we evaluate the Euler $\S$-sum $\S_{11,\bar{1}}$ by CMZVs:
\begin{align*}
\S_{11,\bar{1}}&=\sum_{n=1}^{\infty}(-1)^{n-1}\frac{h_n^2}{n}
    =\sum_{n\geq k_1,k_2\geq 1}\frac{(-1)^{n-1}}{n(k_1-1/2)(k_2-1/2)}\\
&=-2\sum_{m_1>m_2>m_3\geq1}\frac{\om^{m_1}(1+\om^{2m_1})(1-\om^{2m_2})(1+\om^{2m_3})}
    {m_1m_2m_3}+\sum_{n=1}^{\infty}(-1)^{n-1}\frac{h_n^{(2)}}{n}\\
&=-2\sum_{i,j,k\in\{0,1\}}(-1)^{j+k}L_4(1,1,1;2i+1,2j,2k)+\S_{2,\bar{1}}
    =\frac{7}{4}\ze(3)\,,
\end{align*}
which can be found in Chu \cite[Eq. (4.5a)]{Chu97.HSR} and De Doelder \cite[Eq. (21)]{DeDo91}, but can not be covered by Theorem \ref{Th.qua.Stsum}.
\hfill\qedsymbol
\end{example}

\begin{example}
Using the results of linear and quadratic Euler $T$-sums and $\S$-sums, we can give the evaluations of some infinite series involving odd harmonic numbers. For example, we have
\begin{align*}
&\sum_{n=1}^{\infty}\frac{(O_{n-1}+O_n)^2}{(2n-1)^2}
    =\frac{1}{4}T_{1^2,2}+\frac{1}{4}T_{1,3}+\frac{1}{16}\t(4)
    =\frac{1}{2}\pi^2\ln^2(2)+\frac{1}{96}\pi^4\,,\\
&\sum_{n=1}^{\infty}(-1)^{n-1}
    \bibb{\frac{O_{n-1}^{(2)}+O_n^{(2)}}{(2n-1)^2}+2\frac{O_{n-1}+O_n}{(2n-1)^3}}
=\frac{1}{8}T_{2,\bar{2}}+\frac{1}{4}T_{1,\bar{3}}-\frac{3}{16}\t(\bar{4})
=\frac{1}{8}\pi^3\ln(2)\,,
\end{align*}
which were established by Chu \cite[Eqs. (3.5a) and (5.1)]{Chu97.HSR} through the Dougalll-Dixon theorem and one of Dougall's bilateral summation theorems. Moreover, based on our previous result \cite[Corollary 4.3]{XuWang19.TVE}, the cubic sum $\S_{1^3,2}$ equals
\[
\S_{1^3,2}=\tf{7}{2}\pi^2\ze(3)\,.
\]
Thus, the following series due to Zheng \cite[Eq. (3.6d)]{ZhengDY07} holds:
\begin{equation}\label{Zheng.3.6d}
\sum_{n=1}^{\infty}\frac{2O_n^3+O_n^{(3)}}{n^2}
    =\frac{1}{4}\S_{1^3,2}+\frac{1}{8}\S_{3,2}
    =\frac{93}{8}\ze(5)\,.
\end{equation}
By transformations, Chu's results \cite[Eqs. (3.4), (3.5b), (3.5c), (4.5a), (4.5c) and (4.6a)-(4.6c)]{Chu97.HSR} and Zheng's results \cite[Eqs. (3.8b), (3.8c), (3.10a), (4.4a) and (4.4b)]{ZhengDY07} can also be rederived immediately.
\hfill\qedsymbol
\end{example}

\begin{example}
Combining the results in this paper and in the literature, we can determine the evaluations of more series. A similar series to (\ref{Zheng.3.6d}) is
\[
\sum_{n=1}^{\infty}(-1)^{n-1}\frac{2O_n^3+O_n^{(3)}}{n}
    =\frac{1}{4}\S_{1^3,\bar{1}}+\frac{1}{8}\S_{3,\bar{1}}=\frac{1}{64}\pi^4
\]
(see \cite[Eq. (4.3d)]{ZhengDY07}), from which, we obtain the triple $\S$-sum $\S_{1^3,\bar{1}}$:
\[
\S_{1^3,\bar{1}}=4G^2\,,
\]
and further verify Zheng's two complicated series \cite[Eqs. (4.3e) and (4.3f)]{ZhengDY07}. Similarly, from the series given in \cite[Eq. (4.5a)]{ZhengDY07}:
\begin{align*}
\sum_{n=1}^{\infty}(-1)^{n-1}\frac{2O_{n-1}^3+2O_n^3+O_{n-1}^{(3)}+O_n^{(3)}}{2n-1}
&=\frac{1}{4}T_{1^3,\bar{1}}+\frac{3}{8}T_{1^2,\bar{2}}+\frac{3}{8}T_{1,\bar{3}}
    +\frac{1}{8}T_{3,\bar{1}}-\frac{3}{16}\t(\bar{4})\\
&=\frac{3}{16}\pi\ze(3)+\frac{1}{8}\pi\ln(2)^3+\frac{1}{32}\pi^3\ln(2)\,,
\end{align*}
we obtain the triple $T$-sum $T_{1^3,\bar{1}}$:
\[
T_{1^3,\bar{1}}=-192\imm(\Li_4(\tf{1}{2}+\tf{\ui}{2}))+148\be(4)
    -\tf{9}{16}\pi\ze(3)-\tf{1}{2}\pi\ln(2)^3-2\pi^3\ln(2)\,,
\]
and further verify the series \cite[Eqs. (4.5b)]{ZhengDY07}.
\hfill\qedsymbol
\end{example}


\subsection{Parity theorem of the Kaneko-Tsumura triple $T$-values}

Now, let us give the parity theorem of the Kaneko-Tsumura triple $T$-values.

\begin{theorem}\label{Th.PT.MTV}
When the weight $w=m+p+q$ and the quantity $\frac{1-\si_1\si_3}{2}$ have the same parity, the triple $T$-values $T(m,p,q;\si_1,\si_2,\si_3)$, where $(p,\si_2)\neq (1,1)$, are reducible to combinations of zeta values, the Dirichlet beta values, linear Euler $R$-sums and double $T$-values.
\end{theorem}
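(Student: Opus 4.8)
The plan is to reduce the triple $T$-value to a single quadratic Euler $\S$-sum plus manifestly admissible pieces, and then to quote the parity theorem for quadratic $\S$-sums. First I would reindex the defining series of the multiple $T$-value. Writing out
\[
T(m,p,q;\si_1,\si_2,\si_3)
    =2^3\sum_{n_1>n_2>n_3\geq1}
        \frac{\si_1^{n_1}\si_2^{n_2}\si_3^{n_3}}
            {(2n_1-3)^m(2n_2-2)^p(2n_3-1)^q}
\]
and substituting $a=n_1-1$, $b=n_2-1$, $c=n_3$ converts the offsets $2n_1-3,\,2n_2-2,\,2n_3-1$ into $2a-1,\,2b,\,2c-1$ and the condition $n_1>n_2>n_3\geq1$ into $a>b\geq c\geq1$, giving
\[
T(m,p,q;\si_1,\si_2,\si_3)
    =2^{3-m-p-q}\si_1\si_2\sum_{a>b\geq c\geq1}
        \frac{\si_1^a\si_2^b\si_3^c}{(a-1/2)^m\,b^p\,(c-1/2)^q}\,.
\]
For fixed $b$ the constraints $a>b$ and $c\leq b$ are independent, so the inner sum factorizes; using $\sum_{c=1}^{b}\si_3^c(c-1/2)^{-q}=\si_3 h_b^{(q)}(\si_3)$ and $\sum_{a>b}\si_1^a(a-1/2)^{-m}=\t(m;\si_1)-\si_1 h_b^{(m)}(\si_1)$ and recognizing the two resulting $b$-series as Euler $\S$-sums, I obtain the key identity
\[
T(m,p,q;\si_1,\si_2,\si_3)
    =2^{3-m-p-q}\bibb{
        \si_1\si_3\,\t(m;\si_1)\,\S_{q,p}^{\si_3,\si_2}
            -\si_3\,\S_{m,q,p}^{\si_1,\si_3,\si_2}}\,.
\]
This is the exact analogue, one depth higher, of the double-sum relation (\ref{Stpq.DMTV}), and it plays the role that (\ref{Tmpq.TMtV.1})--(\ref{Tmpq.TMtV.2}) played for the Hoffman $t$-values.

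With this identity the reduction is immediate. The factor $\t(m;\si_1)$ is a single $\t$-value, hence a rational multiple of $\ze(m)$ when $\si_1=1$ and of $\be(m)$ when $\si_1=-1$, so it belongs to the target set of constants. The linear sum $\S_{q,p}^{\si_3,\si_2}$ is, by (\ref{Stpq.DMTV}), a constant multiple of the double $T$-value $T(p,q;\si_2,\si_3)$, so the first summand is already a constant times a double $T$-value. The remaining term is the quadratic Euler $\S$-sum $\S_{m,q,p}^{\si_1,\si_3,\si_2}$, whose two harmonic factors carry the signs $\si_1$ and $\si_3$ and whose weight equals $m+q+p=w$. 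Corollary \ref{Coro.Stmpq.PT} therefore reduces it to zeta values, beta values, linear Euler $R$-sums and linear Euler $\S$-sums exactly when $w$ and $\tf{1-\si_1\si_3}{2}$ have the same parity -- which is precisely the hypothesis of the theorem -- and the linear $\S$-sums so produced are themselves double $T$-values via (\ref{Stpq.DMTV}). Collecting the three contributions yields the asserted reduction of $T(m,p,q;\si_1,\si_2,\si_3)$.

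The hypothesis $(p,\si_2)\neq(1,1)$ is exactly what makes every Euler sum in the identity admissible: both $\S_{q,p}^{\si_3,\si_2}$ and $\S_{m,q,p}^{\si_1,\si_3,\si_2}$ have outer exponent $p$ and outer sign $\si_2$, so they converge iff $(p,\si_2)\neq(1,1)$, while the standing admissibility $(m,\si_1)\neq(1,1)$ of the $T$-value guarantees convergence of $\t(m;\si_1)$ and of the tail $\sum_{a>b}$. Because this single relation already does the job, no second auxiliary identity is needed (the case $(p,\si_2)=(1,1)$, which for the $t$-values forced the use of both (\ref{Tmpq.TMtV.1}) and (\ref{Tmpq.TMtV.2}), is simply excluded here). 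I expect the only genuinely delicate point to be the justification of the factorization of the triple series over $a>b\geq c$ into the product of the tail sum and the finite inner sum, i.e.\ the interchange of summations in the (possibly only conditionally convergent) regime; once that rearrangement is secured, the bookkeeping of signs shows that the quadratic $\S$-sum carries precisely the sign product $\si_1\si_3$, so the parity condition of Corollary \ref{Coro.Stmpq.PT} coincides automatically with the stated hypothesis and nothing further is required.
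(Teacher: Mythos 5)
Your proposal is correct and takes essentially the same approach as the paper: your key identity is exactly the paper's Eq.~(\ref{Stmpq.TMTV}), which the paper obtains by stuffle-decomposing the product $\t(m;\si_1)\S_{q,p}^{\si_3,\si_2}$ rather than by expanding and factorizing the triple $T$-value directly, but the two derivations are the same algebraic decomposition read in opposite directions. After that, both arguments invoke Corollary~\ref{Coro.Stmpq.PT} for the quadratic $\S$-sum $\S_{m,q,p}^{\si_1,\si_3,\si_2}$ (whose sign pair $\si_1,\si_3$ makes the parity hypothesis match) and convert the remaining linear $\S$-sums into double $T$-values via Eq.~(\ref{Stpq.DMTV}).
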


\begin{proof}
Note that by convention, $(m,\si_1)\neq (1,1)$. According to Eq. (\ref{Stpq.DMTV}), we have
\[
\t(m;\si_1)\S_{q,p}^{\si_3,\si_2}
    =2^{m+p+q}\si_3\bibb{\sum_{n_1\geq n_2>n_3\geq1}+\sum_{n_2>n_1,n_3\geq 1}}
        \frac{\si_1^{n_1}\si_2^{n_2}\si_3^{n_3}}{(2n_1-1)^m(2n_2-2)^p(2n_3-1)^q}\,,
\]
which further gives
\begin{equation}\label{Stmpq.TMTV}
T(m,p,q;\si_1,\si_2,\si_3)
    =\frac{\si_1\si_3}{2^{m+p+q-3}}\t(m;\si_1)\S_{q,p}^{\si_3,\si_2}
    -\frac{\si_3}{2^{m+p+q-3}}\S_{m,q,p}^{\si_1,\si_3,\si_2}\,,
\end{equation}
for $(m,\si_1)\neq(1,1)$ and $(p,\si_2)\neq (1,1)$. Thus, by Corollary \ref{Coro.Stmpq.PT} and Eq. (\ref{Stmpq.TMTV}), under the conditions of this theorem, the triple $T$-values $T(m,p,q;\si_1,\si_2,\si_3)$ are expressible in terms of zeta values, the Dirichlet beta values, linear $R$-sums and linear $\S$-sums. Note further that by Eq. (\ref{Stpq.DMTV}), all the linear $\S$-sums can be rewritten as double $T$-values. Thus, the assertion of the theorem holds.
\end{proof}

\begin{remark}
According to Theorem \ref{Th.Rpq.MTV}, if $(p,\si_1)\neq(1,1)$, the linear $R$-sums $R_{p,q}^{\si_1,\si_2}$ can be rewritten as combinations of zeta values, the Dirichlet beta values and double $T$-values. In the case of $(p,\si_1)=(1,1)$, we have $R_{1,q}$ and $R_{1,\bar{q}}$. By Eq. (\ref{R1q}), the sums $R_{1,q}$ are expressible in terms of zeta values, and by Eq. (\ref{Rpq}), the sums $R_{1,\bar{q}}$, where $q$ is odd, are also reducible to zeta values and beta values. We do not know whether the linear $R$-sums $R_{1,\bar{q}}$, where $q$ is even, can be converted into combinations of zeta values, beta values and double $T$-values, though by Theorem \ref{Th.Rpq.CMZV}, they can be expressed by CMZVs. As a result, we do not know, under the conditions of Theorem \ref{Th.PT.MTV}, whether the triple $T$-values $T(m,p,q;\si_1,\si_2,\si_3)$ can reduce to combinations of zeta values, beta values and double $T$-values directly.\hfill\qedsymbol
\end{remark}

Using Eqs. (\ref{Stpq.DMTV}) and (\ref{Stmpq.TMTV}) as well as the results of the Euler $R$-sums and $\S$-sums, we can find the evaluations and reduction formulas of some Kaneko-Tsumura double and triple $T$-values.

\begin{example}
Here are some selected Kaneko-Tsumura triple $T$-values:
\begin{align*}
T(2,\bar{1},1)&=2\pi\imm(\Li_3(\tf{1}{2}+\tf{\ui}{2}))+\pi\ln(2)G
    -\tf{1}{16}\pi^2\ln(2)^2-\tf{3}{64}\pi^4\,,\\
T(\bar{2},\bar{1},\bar{1})&=-4G^2+2\pi\imm(\Li_3(\tf{1}{2}+\tf{\ui}{2}))
    +\pi\ln(2)G-\tf{1}{16}\pi^2\ln(2)^2-\tf{1}{64}\pi^4\,,\\
T(3,2,1)&=\tf{49}{16}\ze(3)^2+2\pi^2\Lii_4(\tf{1}{2})+\tf{7}{4}\pi^2\ln(2)\ze(3)
    +\tf{1}{12}\pi^2\ln(2)^4-\tf{1}{12}\pi^4\ln(2)^2-\tf{151}{5760}\pi^6\,,\\
T(\bar{3},2,\bar{1})&=-12\be(4)G-\pi^2\Lii_4(\tf{1}{2})-\tf{7}{8}\pi^2\ln(2)\ze(3)
    +\tf{1}{2}\pi^2G^2-\tf{1}{24}\pi^2\ln(2)^4\\
&\quad+\tf{1}{4}\pi^3\imm(\Li_3(\tf{1}{2}+\tf{\ui}{2}))
    +\tf{1}{8}\pi^3\ln(2)G+\tf{13}{384}\pi^4\ln(2)^2+\tf{257}{23040}\pi^6\,,
\end{align*}
and
\begin{align*}
T(2,2,\bar{1})&=-7\ze(3)G+2\pi\Lii_4(\tf{1}{2})+\tf{7}{4}\pi\ln(2)\ze(3)
    +\pi G^2+\tf{1}{12}\pi\ln(2)^4+\pi^2\imm(\Li_3(\tf{1}{2}+\tf{\ui}{2}))\\
&\quad+\tf{1}{2}\pi^2G\ln(2)
    -\tf{11}{96}\pi^3\ln(2)^2-\tf{49}{1440}\pi^5\,,\\
T(\bar{2},2,1)&=\tf{7}{2}\ze(3)G+2\pi\Lii_4(\tf{1}{2})+\tf{7}{4}\pi\ln(2)\ze(3)
    -\pi G^2+\tf{1}{12}\pi\ln(2)^4\\
&\quad-\tf{1}{12}\pi^3\ln(2)^2-\tf{151}{5760}\pi^5\,,\\
T(2,\bar{2},\bar{1})&=7\ze(3)G-3\pi^2\imm(\Li_3(\tf{1}{2}+\tf{\ui}{2}))
    -\tf{3}{2}\pi^2\ln(2)G+\tf{3}{32}\pi^3\ln(2)^2+\tf{7}{128}\pi^5\,,\\
T(\bar{2},\bar{2},1)&=-\tf{7}{2}\ze(3)G-\pi^2\imm(\Lii_3(\tf{1}{2}+\tf{\ui}{2}))
    -\tf{1}{2}\pi^2\ln(2)G+\tf{1}{32}\pi^3\ln(2)^2+\tf{5}{128}\pi^5\,.
\end{align*}
Next, we give two representative reduction formulas:
\begin{align*}
T(4,\bar{1},1)&=\tf{1}{2}\pi T(3,\bar{2})+\tf{1}{2}\pi T(2,\bar{3})
    +\tf{1}{16}\pi R_{1,\bar{4}}+2\pi\ln(2)\be(4)
    +\tf{1}{2}\pi^3\imm(\Li_3(\tf{1}{2}+\tf{\ui}{2}))\\
&\quad+\tf{1}{4}\pi^3\ln(2)G
    -\tf{1}{64}\pi^4\ln(2)^2-\tf{11}{768}\pi^6\,,\\
T(5,\bar{1},\bar{1})&=-\tf{31}{8}\ze(5)G
    +\tf{1}{4}\pi^2T(\bar{4},\bar{1})+\tf{1}{4}\pi^2T(\bar{3},\bar{2})
    +\tf{1}{4}\pi^2T(\bar{2},\bar{3})+\tf{1}{4}\pi^2T(\bar{1},\bar{4})\\
&\quad-\tf{1}{12}\pi^4\imm(\Li_3(\tf{1}{2}+\tf{\ui}{2}))
    -\tf{1}{24}\pi^4\ln(2)G+\tf{1}{384}\pi^5\ln(2)^2+\tf{3}{512}\pi^7\,.
\end{align*}
More evaluations and reduction formulas of the Kaneko-Tsumura double and triple $T$-values can be obtained by the results of the present paper.\hfill\qedsymbol
\end{example}


\section{Further remarks}\label{Sec.Remark}

It is clear that some main results in our previous paper \cite{XuWang19.TVE} are immediate corollaries of this paper, and the four types of the Euler sums
\[
S_{p_1,p_2,\ldots,p_k,q}^{\si_1,\si_2,\ldots,\si_k,\si}\,,\quad T_{p_1,p_2,\ldots,p_k,q}^{\si_1,\si_2,\ldots,\si_k,\si}\,,\quad \S_{p_1,p_2,\ldots,p_k,q}^{\si_1,\si_2,\ldots,\si_k,\si}\,,\quad R_{p_1,p_2,\ldots,p_k,q}^{\si_1,\si_2,\ldots,\si_k,\si}
\]
can be expressed in terms of linear combinations of CMZVs of level four.

In addition, it is possible to establish some other relations involving alternating Euler $T$-sums and $\S$-sums by using the techniques described here. Let $A,A^{(1)},\ldots,A^{(k)}\in\{A_1,A_2\}$, where $A_1=\{1^k\}$ and $A_2=\{(-1)^k\}$. Then we have the following two parity theorems.

\begin{theorem}\label{Th.redu.Tsum}
The Euler $T$-sums $T_{p_1,p_2,\ldots,p_k,q}^{\si_1,\si_2,\ldots,\si_k,\si}$ of degree $k$ are reducible to (alternating) Euler sums and Euler $T$-sums of lower degrees, whenever the sum $p_1+\cdots+p_k+q+k$ and the quantity $\frac{1-\si_1\cdots\si_k\si}{2}$ are of the same parity.
\end{theorem}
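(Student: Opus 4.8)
The plan is to push the residue computations of Theorems~\ref{Th.linear.sum1} and~\ref{Th.qua.sum1} from degrees one and two up to arbitrary degree $k$. Concretely, I would apply the residue theorem, Lemma~\ref{Lem.Res}, to the kernel function
\[
\xi_k(s)=\pi\cot(\pi s;A)\prod_{i=1}^{k}\frac{\vPsi^{(p_i-1)}(\tf{1}{2}-s;A^{(i)})}{(p_i-1)!}
\]
against the base function $r_1(s)=(s-1/2)^{-q}$, where $A,A^{(1)},\dots,A^{(k)}\in\{A_1,A_2\}$ encode the signs $\si,\si_1,\dots,\si_k$ respectively. The product $\xi_k(s)r_1(s)$ has simple poles at the integers $s=\pm n$, poles of order $p_1+\cdots+p_k$ at the half-integers $s=n-1/2$ for $n\ge2$, and a single pole of order $p_1+\cdots+p_k+q$ at $s=1/2$. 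Since $\xi_k(s)r_1(s)=O(s^{-2})$ along the Flajolet--Salvy circles, the residues sum to zero, and this single relation is what I would solve for the degree-$k$ sum.

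Next I would evaluate the four families of residues using the local expansions already on hand. At $s=n$ Lemmas~\ref{Lem.Psi.nZ} and~\ref{Lem.cot.nZ} give a residue of shape $a_n\prod_i N^{(A^{(i)})}_n(p_i)/(n-1/2)^q$, and at $s=-n$ one of shape $(-1)^{p_1+\cdots+p_k+q}\,a_n\prod_i\bar N^{(A^{(i)})}_{n+1}(p_i)/(n+1/2)^q$. After setting each sequence to $A_1$ or $A_2$, the specialization formulas of Section~\ref{Sec.Notation.Exp} rewrite each $N$ and $\bar N$ as an odd harmonic number $h^{(p_i)}_n(\si_i)$ plus a constant multiple of a $\hat t$- or $\tilde t$-value. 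Expanding the $k$-fold product, the unique term carrying all $k$ harmonic factors reproduces $T^{\si_1,\dots,\si_k,\si}_{p_1,\dots,p_k,q}$, while every mixed term, in which at least one factor contributes its constant part, yields a $T$-sum of strictly lower degree times a product of zeta or beta values. The half-integer residues, computed from Lemmas~\ref{Lem.Psi.hn} and~\ref{Lem.cot.hn}, involve the functions $M^{(A^{(i)})}$ and $S^{(A)}$; exactly as in the quantity $\mathcal{G}_4$ of the quadratic case, these contribute classical (alternating) linear Euler sums and lower-degree $T$-sums, while the residue at $s=1/2$, obtained from~\eqref{Psi.s.0.5} and~\eqref{cot.dm.lim}, contributes only products of zeta and beta constants.

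The decisive step is to track the coefficient of the top-degree sum. Combining the $s=n$ and $s=-n$ contributions, the all-harmonic terms collapse into a single multiple of $T^{\si_1,\dots,\si_k,\si}_{p_1,\dots,p_k,q}$ whose prefactor is, up to an overall unit, the quantity
\[
\de_{\si_1\cdots\si_k\si}^{\,p_1+\cdots+p_k+q-k+1}
    =1-\si_1\cdots\si_k\si\,(-1)^{p_1+\cdots+p_k+q-k+1}\,,
\]
generalizing the factors $(-1)^p\de^{p+q}_{\si_1\si_2}$ and $\si_1\si_2\si_3\de^{m+p+q-1}_{\si_1\si_2\si_3}$ seen in Theorems~\ref{Th.lin.Tsum} and~\ref{Th.qua.Tsum}. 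This prefactor is nonzero exactly when $(-1)^{p_1+\cdots+p_k+q+k}=\si_1\cdots\si_k\si$, that is, precisely when $p_1+\cdots+p_k+q+k$ and $\tf{1}{2}(1-\si_1\cdots\si_k\si)$ share the same parity. Under that hypothesis I would divide the vanishing-residue identity by this nonzero constant and solve for the degree-$k$ sum, which is then expressed through the half-integer and $s=1/2$ contributions, namely classical (alternating) Euler sums, lower-degree Euler $T$-sums, and products of zeta and beta values.

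I expect the main obstacle to be the combinatorial bookkeeping in the expansion of the $k$-fold products $\prod_i N^{(A^{(i)})}_n(p_i)$ and $\prod_i\bar N^{(A^{(i)})}_{n+1}(p_i)$: one must check that exactly the all-harmonic term survives at full degree with the advertised $\de$-coefficient, that the signs produced by the $(-1)^{n-1}$ factors of the $A_2$-specializations combine correctly across $s=n$ and $s=-n$, and that every remaining term, including the doubly summed half-integer contributions, is genuinely of lower degree or a classical Euler sum. As in the proof of Theorem~\ref{Th.qua.Tsum}, eliminating the residual Kronecker-delta terms that arise from the $A_2$ substitutions will require feeding back the already-established lower-degree parity results, and extra care is needed in the boundary cases where some $p_i=1$ or $q=1$.
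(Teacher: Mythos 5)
Your proposal is, in substance, the paper's own proof: the paper applies exactly this kernel $\pi\cot(\pi s;A)\prod_{i=1}^k\vPsi^{(p_i-1)}(\tf{1}{2}-s;A^{(i)})/(p_i-1)!$ to the base function $r_1(s)=(s-1/2)^{-q}$, collects the integer-pole residues into the two sums involving $N^{(A^{(i)})}_n$ and $\bar{N}^{(A^{(i)})}_n$, treats the half-integer and $s=1/2$ contributions as ``a combination of general sums'' of lower degree, and solves the resulting identity $(1+\si_1\cdots\si_k\si(-1)^{p_1+\cdots+p_k+q+k})\,T^{\si_1,\ldots,\si_k,\si}_{p_1,\ldots,p_k,q}=U$ precisely when the stated parity condition makes the prefactor equal $2$.

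One bookkeeping slip, though: the outer sequence must be chosen as $A=\{(\si_1\si_2\cdots\si_k\si)^n\}_{n\in\mathbb{Z}}$, as the paper does, not as the sequence encoding $\si$ alone. The reason is that every factor specialized to $A_2$ contributes its own alternation along with the alternating odd harmonic number, $N^{(A_2)}_n(j)=(-1)^{n-1}\bar{h}^{(j)}_n+\cdots$, so the outer sign of the $T$-sum that survives in the all-harmonic term is the \emph{product} of the signs of all $k+1$ sequences. Under your literal assignment ($A\leftrightarrow\si$, $A^{(i)}\leftrightarrow\si_i$) the top-degree term is $T^{\si_1,\ldots,\si_k,\si\si_1\cdots\si_k}_{p_1,\ldots,p_k,q}$, not $T^{\si_1,\ldots,\si_k,\si}_{p_1,\ldots,p_k,q}$; compare the linear case, where the paper obtains $T_{\bar{p},\bar{q}}$ from the choice $(A,B)=(A_1,A_2)$ rather than $(A_2,A_2)$. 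This is a relabeling rather than a structural defect---since the map $\si\mapsto\si\si_1\cdots\si_k$ is a bijection on $\{\pm1\}$, your scheme still reaches every $T$-sum, and your claimed prefactor $1+\si_1\cdots\si_k\si(-1)^{p_1+\cdots+p_k+q+k}$ together with the parity criterion is exactly what the corrected choice of $A$ produces---but as written your setup and your stated conclusion are inconsistent with each other, so the encoding should be fixed before the rest of the argument goes through.
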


\begin{proof}
Applying the kernel function
\[
\xi(s)=\pi\cot(\pi s;A)
    \prod_{i=1}^k\frac{\vPsi^{(p_i-1)}(\tf{1}{2}-s;A^{(i)})}{(p_i-1)!}
\]
to the base function $r_1(s)=(s-1/2)^{-q}$, and considering the residues, we have
\begin{align*}
&(-1)^{p_1+p_2+\cdots+p_k+q}\sum_{n=1}^\infty
    \frac{a_{n-1}\bar{N}^{(A^{(1)})}_n(p_1)\bar{N}^{(A^{(2)})}_n(p_2)
        \cdots\bar{N}^{(A^{(k)})}_n(p_k)}{(n-1/2)^q}\\
&\quad+\sum_{n=1}^\infty
    \frac{a_nN^{(A^{(1)})}_n(p_1)N^{(A^{(2)})}_n(p_2)
        \cdots N^{(A^{(k)})}_n(p_k)}{(n-1/2)^q}\\
&\quad=\text{a combination of general sums}\,,
\end{align*}
where $a_n$ is the $n$th term in the sequence $A$. Now, set $A=\{(\si_1\si_2\cdots\si_k\si)^n\}_{n\in\mathbb{Z}}$ and $A^{(i)}=\{\si_i^n\}_{n\in\mathbb{Z}}$, for $i=1,2,\ldots,k$, and perform transformations. Then the result will be of the form
\[
(1+\si_1\si_2\cdots\si_k\si(-1)^{p_1+p_2+\cdots+p_k+q+k})
    T_{p_1,p_2,\ldots,p_k,q}^{\si_1,\si_2,\ldots,\si_k,\si}=U\,,
\]
where $U$ is a combination of sums of lower degrees. Thus, we obtain the desired description.
\end{proof}

\begin{theorem}\label{Th.redu.Stsum}
The Euler $\S$-sums $\S_{p_1,p_2,\ldots,p_k,q}^{\si_1,\si_2,\ldots,\si_k,\si}$ of degree $k$ are reducible to (alternating) Euler $\S$-sums and Euler $R$-sums of lower degrees, whenever the sum $p_1+\cdots+p_k+q+k$ and the quantity $\frac{1-\si_1\cdots\si_k}{2}$ are of the same parity.
\end{theorem}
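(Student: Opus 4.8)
The plan is to run the contour-integral argument of Theorem \ref{Th.redu.Tsum} with the \emph{same} degree-$k$ kernel but against the integer base function, precisely as Theorem \ref{Th.qua.sum2} is obtained from the kernel of Theorem \ref{Th.qua.sum1}. Concretely, I would apply Lemma \ref{Lem.Res} to $\mathcal{H}(s)=\xi(s)r_2(s)$, where
\[
\xi(s)=\pi\cot(\pi s;A)\prod_{i=1}^k\frac{\vPsi^{(p_i-1)}(\tf{1}{2}-s;A^{(i)})}{(p_i-1)!}
\]
is the kernel of Theorem \ref{Th.redu.Tsum} and $r_2(s)=s^{-q}$. Since each $\vPsi^{(p_i-1)}(\tf{1}{2}-s;A^{(i)})$ is regular at the integers while $r_2$ is regular at the half-integers, $\mathcal{H}(s)$ has simple poles at $s=n$ and $s=-n$, a pole of order $p_1+\cdots+p_k$ at each half-integer $s=n-\tf{1}{2}$, and a pole of order $q+1$ at $s=0$; this is the arbitrary-degree analogue of Theorem \ref{Th.qua.sum2}.

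I would then evaluate the four families of residues exactly as there. By Lemmas \ref{Lem.Psi.nZ} and \ref{Lem.cot.nZ}, the poles at $s=n$ and $s=-n$ contribute
\[
\sum_{n=1}^\infty\frac{a_n\prod_{i=1}^kN^{(A^{(i)})}_n(p_i)}{n^q}
\qquad\text{and}\qquad
(-1)^{p_1+\cdots+p_k+q}\sum_{n=1}^\infty
    \frac{a_n\prod_{i=1}^k\bar{N}^{(A^{(i)})}_{n+1}(p_i)}{n^q}\,,
\]
which will carry the two occurrences of the target sum. By Lemmas \ref{Lem.Psi.hn} and \ref{Lem.cot.hn}, the half-integer residues produce sums of products of $M^{(A^{(i)})}$-sequences against a single $S^{(A)}$-sequence over half-integer denominators; because the $M$-sequences encode classical (rather than odd) harmonic numbers, these are exactly the Euler $R$-sums of Section \ref{Sec.lin.Rsum} together with lower-degree $\S$-sums. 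Finally, the residue at $s=0$, read off from (\ref{Psi.s.0}) and (\ref{cot.s.0}), is the constant term built from $\hat{t}^{(A^{(i)})}$- and $D^{(A)}$-values, i.e.\ products of zeta and beta values (the degree-$k$ analogue of the quantity $\mathcal{H}_0$ in Theorem \ref{Th.qua.sum2}).

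The decisive step is the specialization $A=\{(\si_1\si_2\cdots\si_k\si)^n\}_{n\in\mathbb{Z}}$ and $A^{(i)}=\{\si_i^n\}_{n\in\mathbb{Z}}$. Using the evaluations of Section \ref{Sec.Notation.Exp}, one has $N^{(A^{(i)})}_n(p_i)=\si_i^{\,n-1}h_n^{(p_i)}(\si_i)+(\text{lower order})$ and $\bar{N}^{(A^{(i)})}_{n+1}(p_i)=-\si_i^{\,n}h_n^{(p_i)}(\si_i)+(\text{lower order})$, so the two integer-residue sums collapse onto the single series $\S_{p_1,p_2,\ldots,p_k,q}^{\si_1,\si_2,\ldots,\si_k,\si}$ with combined prefactor $\si_1\cdots\si_k\si\,(1+\si_1\cdots\si_k(-1)^{p_1+\cdots+p_k+q+k})$. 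The cancellation of $\si$ --- which is \emph{present} in the criterion of Theorem \ref{Th.redu.Tsum} --- happens here because the $s=-n$ residue is indexed by $a_n\prod_i\bar{N}^{(A^{(i)})}_{n+1}(p_i)$, not by $a_{n-1}\prod_i\bar{N}^{(A^{(i)})}_{n}(p_i)$: the unit shift in \emph{both} indices changes the power of $\si_1\cdots\si_k\si$ by one, so that $\si$ is common to the forward and backward coefficients and factors out of the vanishing condition. This is the structural reason the parity criterion for $\S$-sums reads $\tf{1-\si_1\cdots\si_k}{2}$ rather than $\tf{1-\si_1\cdots\si_k\si}{2}$. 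After dividing by the nonzero scalar $\si_1\cdots\si_k\si$, the identity takes the form
\[
(1+\si_1\cdots\si_k(-1)^{p_1+\cdots+p_k+q+k})\,
    \S_{p_1,p_2,\ldots,p_k,q}^{\si_1,\si_2,\ldots,\si_k,\si}=U\,,
\]
with $U$ a combination of strictly lower-degree $\S$-sums and $R$-sums and of products of zeta and beta values; when $p_1+\cdots+p_k+q+k$ and $\tf{1-\si_1\cdots\si_k}{2}$ share parity the prefactor equals $2$, and dividing gives the reduction.

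The hard part, exactly as in the proof of Theorem \ref{Th.qua.Stsum}, will be organizing $U$ so that no genuinely degree-$k$ object survives. When some $A$ or $A^{(i)}$ equals $A_2$, the sequence evaluations of Section \ref{Sec.Notation.Exp} carry extra $\de_{j,1}\ln(2)$ and alternating-sign terms, and the half-integer residues a priori contain products that still resemble degree-$k$ sums; I would clear these by substituting the already-established reductions for the $\S$- and $R$-sums of lower degree (and, in the fully linear case $p_1=\cdots=p_k=1$, by splitting on the parity of $q$), verifying that every degree-$k$ contribution on the right cancels and only degree-${<}k$ data remains. Checking that this elimination closes --- and that the residual Kronecker-delta terms combine into admissible zeta and beta values --- is the only substantive obstacle; the residue computations themselves are routine given Lemmas \ref{Lem.Psi.nZ}--\ref{Lem.cot.hn}.
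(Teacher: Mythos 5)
Your proposal is correct and follows essentially the same route as the paper: the paper's proof likewise applies the degree-$k$ kernel $\xi(s)$ to the base function $r_2(s)=s^{-q}$, specializes $A=\{(\si_1\cdots\si_k\si)^n\}$, $A^{(i)}=\{\si_i^n\}$, and arrives at exactly your identity $(1+\si_1\cdots\si_k(-1)^{p_1+\cdots+p_k+q+k})\,\S_{p_1,\ldots,p_k,q}^{\si_1,\ldots,\si_k,\si}=V$ with $V$ of lower degree. Your additional details --- the residue bookkeeping, the leading-term expansions $N^{(A^{(i)})}_n(p_i)=\si_i^{n-1}h_n^{(p_i)}(\si_i)+\cdots$ and $\bar{N}^{(A^{(i)})}_{n+1}(p_i)=-\si_i^{n}h_n^{(p_i)}(\si_i)+\cdots$, and the explanation of why $\si$ factors out of the vanishing condition (unlike the $T$-sum case) --- are all consistent with, and usefully amplify, the paper's terse argument.
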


\begin{proof}
In this case, applying the kernel function $\xi(s)$ to the base function $r_2(s)=s^{-q}$, and following along the same lines, we have
\begin{align*}
&(-1)^{p_1+p_2+\cdots+p_k+q}\sum_{n=1}^\infty
    \frac{a_n\bar{N}^{(A^{(1)})}_{n+1}(p_1)\bar{N}^{(A^{(2)})}_{n+1}(p_2)
        \cdots\bar{N}^{(A^{(k)})}_{n+1}(p_k)}{n^q}\\
&\quad+\sum_{n=1}^\infty\frac{a_nN^{(A^{(1)})}_n(p_1)N^{(A^{(2)})}_n(p_2)
    \cdots N^{(A^{(k)})}_n(p_k)}{n^q}\\
&\quad=\text{a combination of general sums}\,.
\end{align*}
Next, set $A=\{(\si_1\si_2\cdots\si_k\si)^n\}_{n\in\mathbb{Z}}$ and $A^{(i)}=\{\si_i^n\}_{n\in \mathbb{Z}}$, for $i=1,2,\ldots,k$. Then we have
\[
(1+\si_1\si_2\cdots\si_k(-1)^{p_1+p_2+\cdots+p_k+q+k})
    \S_{p_1,p_2,\ldots,p_k,q}^{\si_1,\si_2,\ldots,\si_k,\si}=V\,,
\]
where $V$ is a combination of sums of lower degrees, and the assertion of the theorem follows.
\end{proof}


\section*{Acknowledgments}
\addcontentsline{toc}{section}{Acknowledgments}

The first author Ce Xu is supported by the National Natural Science Foundation of China (under Grant 12101008) and the Scientific Research Foundation for Scholars of Anhui Normal University. The corresponding author Weiping Wang is supported by the Zhejiang Provincial Natural Science Foundation of China (under Grant LY22A010018) and the National Natural Science Foundation of China (under Grant 11671360).


\addcontentsline{toc}{section}{References}


\section*{Appendix}\label{Sec.Appendix}
\addcontentsline{toc}{section}{Appendix}
\renewcommand{\thesubsection}{A.\arabic{subsection}}


\subsection{Some linear $T$-sums of weights $8$ and $9$}

For the linear sums $T_{p,q}$ of weight $9$, we have
\begin{align*}
&T_{1,8}=-\tf{511}{2}\ze(9)-\tf{1}{2}\pi^2\ze(7)-\tf{1}{6}\pi^4\ze(5)
    -\tf{1}{15}\pi^6\ze(3)+\tf{17}{315}\pi^8\ln(2)\,,\\
&T_{2,7}=-\tf{511}{2}\ze(9)+\tf{7}{2}\pi^2\ze(7)+\tf{2}{3}\pi^4\ze(5)+\tf{2}{15}\pi^6\ze(3)\,,\\
&T_{3,6}=-\tf{511}{2}\ze(9)-\tf{21}{2}\pi^2\ze(7)-\pi^4\ze(5)+\tf{2}{5}\pi^6\ze(3)\,,\\
&T_{4,5}=-\tf{511}{2}\ze(9)+\tf{35}{2}\pi^2\ze(7)+\tf{5}{6}\pi^4\ze(5)\,,\\
&T_{5,4}=-\tf{511}{2}\ze(9)-\tf{35}{2}\pi^2\ze(7)+\tf{13}{3}\pi^4\ze(5)\,,\\
&T_{6,3}=-\tf{511}{2}\ze(9)+\tf{21}{2}\pi^2\ze(7)+\pi^4\ze(5)+\tf{1}{15}\pi^6\ze(3)\,,\\
&T_{7,2}=-\tf{511}{2}\ze(9)+60\pi^2\ze(7)-\tf{2}{3}\pi^4\ze(5)-\tf{2}{15}\pi^6\ze(3)\,.
\end{align*}
For the linear sums $T_{\bar{p},\bar{q}}$ of weight $9$, we have
\begin{align*}
&T_{\bar{1},\bar{8}}=-\tf{511}{2}\ze(9)+\tf{63}{128}\pi^2\ze(7)
    +\tf{5}{32}\pi^4\ze(5)+\tf{1}{20}\pi^6\ze(3)+\tf{17}{630}\pi^8\ln(2)\,,\\
&T_{\bar{2},\bar{7}}=-\tf{511}{2}\ze(9)-\tf{441}{128}\pi^2\ze(7)
    -\tf{5}{8}\pi^4\ze(5)-\tf{1}{10}\pi^6\ze(3)+\tf{61}{360}\pi^7G\,,\\
&T_{\bar{3},\bar{6}}=-\tf{511}{2}\ze(9)+\tf{1323}{128}\pi^2\ze(7)
    +\tf{15}{16}\pi^4\ze(5)+\tf{1}{20}\pi^6\ze(3)\,,\\
&T_{\bar{4},\bar{5}}=-\tf{511}{2}\ze(9)-\tf{2205}{128}\pi^2\ze(7)
    -\tf{25}{32}\pi^4\ze(5)+\tf{5}{3}\pi^5\be(4)\,,\\
&T_{\bar{5},\bar{4}}=-\tf{511}{2}\ze(9)+\tf{2205}{128}\pi^2\ze(7)+\tf{25}{32}\pi^4\ze(5)\,,\\
&T_{\bar{6},\bar{3}}=-\tf{511}{2}\ze(9)-\tf{1323}{128}\pi^2\ze(7)+16\pi^3\be(6)
    -\tf{15}{16}\pi^4\ze(5)-\tf{1}{20}\pi^6\ze(3)\,,\\
&T_{\bar{7},\bar{2}}=-\tf{511}{2}\ze(9)+\tf{441}{128}\pi^2\ze(7)
    +\tf{5}{8}\pi^4\ze(5)+\tf{1}{10}\pi^6\ze(3)\,,\\
&T_{\bar{8},\bar{1}}=-\tf{511}{2}\ze(9)+128\pi\be(8)-\tf{63}{128}\pi^2\ze(7)
    -\tf{5}{32}\pi^4\ze(5)-\tf{1}{20}\pi^6\ze(3)-\tf{17}{630}\pi^8\ln(2)\,.
\end{align*}
For the linear sums $T_{p,\bar{q}}$ of weight $8$, we have
\begin{align*}
&T_{1,\bar{7}}=-128\be(8)-\tf{127}{128}\pi\ze(7)-\tf{1}{4}\pi^3\ze(5)
    -\tf{5}{48}\pi^5\ze(3)+\tf{61}{720}\pi^7\ln(2)\,,\\
&T_{2,\bar{6}}=-128\be(8)+\tf{381}{64}\pi\ze(7)+\pi^3\ze(5)+\tf{5}{24}\pi^5\ze(3)\,,\\
&T_{3,\bar{5}}=-128\be(8)-\tf{1905}{128}\pi\ze(7)-\tf{111}{64}\pi^3\ze(5)+\tf{5}{8}\pi^5\ze(3)\,,\\
&T_{4,\bar{4}}=-128\be(8)+\tf{635}{32}\pi\ze(7)+\tf{31}{16}\pi^3\ze(5)\,,\\
&T_{5,\bar{3}}=-128\be(8)-\tf{1905}{128}\pi\ze(7)
    +\tf{195}{32}\pi^3\ze(5)-\tf{5}{64}\pi^5\ze(3)\,,\\
&T_{6,\bar{2}}=-128\be(8)+\tf{381}{64}\pi\ze(7)
    +\tf{15}{16}\pi^3\ze(5)+\tf{5}{32}\pi^5\ze(3)\,,\\
&T_{7,\bar{1}}=-128\be(8)+\tf{8001}{128}\pi\ze(7)-\tf{15}{64}\pi^3\ze(5)
    -\tf{5}{64}\pi^5\ze(3)-\tf{61}{1440}\pi^7\ln(2)\,.
\end{align*}
For the linear sums $T_{\bar{p},q}$ of weight $8$, we have
\begin{align*}
&T_{\bar{1},7}=-128\be(8)+\tf{127}{128}\pi\ze(7)+\tf{15}{64}\pi^3\ze(5)
    +\tf{5}{64}\pi^5\ze(3)+\tf{61}{1440}\pi^7\ln(2)\,,\\
&T_{\bar{2},6}=-128\be(8)-\tf{381}{64}\pi\ze(7)-\tf{15}{16}\pi^3\ze(5)
    -\tf{5}{32}\pi^5\ze(3)+\tf{4}{15}\pi^6G\,,\\
&T_{\bar{3},5}=-128\be(8)+\tf{1905}{128}\pi\ze(7)+\tf{53}{32}\pi^3\ze(5)+\tf{5}{64}\pi^5\ze(3)\,,\\
&T_{\bar{4},4}=-128\be(8)-\tf{635}{32}\pi\ze(7)-\tf{31}{16}\pi^3\ze(5)+\tf{8}{3}\pi^4\be(4)\,,\\
&T_{\bar{5},3}=-128\be(8)+\tf{1905}{128}\pi\ze(7)
    +\tf{111}{64}\pi^3\ze(5)+\tf{5}{48}\pi^5\ze(3)\,,\\
&T_{\bar{6},2}=-128\be(8)-\tf{381}{64}\pi\ze(7)+32\pi^2\be(6)-\pi^3\ze(5)-\tf{5}{24}\pi^5\ze(3)\,.
\end{align*}


\subsection{Some linear $\S$-sums of weights $8$ and $9$}

For the linear sums $\S_{p,q}$ of weight $9$, we have
\begin{align*}
&\S_{1,8}=\tf{511}{2}\ze(9)-\tf{127}{6}\pi^2\ze(7)-\tf{31}{90}\pi^4\ze(5)
    -\tf{1}{135}\pi^6\ze(3)\,,\\
&\S_{2,7}=-2044\ze(9)+\tf{381}{2}\pi^2\ze(7)+\tf{62}{45}\pi^4\ze(5)+\tf{2}{135}\pi^6\ze(3)\,,\\
&\S_{3,6}=7154\ze(9)-\tf{1397}{2}\pi^2\ze(7)-\tf{31}{15}\pi^4\ze(5)\,,\\
&\S_{4,5}=-14308\ze(9)+\tf{8255}{6}\pi^2\ze(7)+\tf{589}{90}\pi^4\ze(5)\,,\\
&\S_{5,4}=17885\ze(9)-\tf{3175}{2}\pi^2\ze(7)-\tf{62}{3}\pi^4\ze(5)\,,\\
&\S_{6,3}=-14308\ze(9)+\tf{2159}{2}\pi^2\ze(7)+31\pi^4\ze(5)+\tf{7}{15}\pi^6\ze(3)\,,\\
&\S_{7,2}=7154\ze(9)-381\pi^2\ze(7)-\tf{62}{3}\pi^4\ze(5)-\tf{14}{15}\pi^6\ze(3)\,.
\end{align*}
For the linear sums $\S_{p,\bar{q}}$ of weight $9$, we have
\begin{align*}
&\S_{1,\bar{8}}=-\tf{511}{2}\ze(9)+128\pi\be(8)-\tf{127}{12}\pi^2\ze(7)
    -\tf{217}{720}\pi^4\ze(5)-\tf{31}{4320}\pi^6\ze(3)\,,\\
&\S_{2,\bar{7}}=2044\ze(9)-896\pi\be(8)+\tf{127}{2}\pi^2\ze(7)
    +\tf{217}{180}\pi^4\ze(5)+\tf{31}{2160}\pi^6\ze(3)\,,\\
&\S_{3,\bar{6}}=-7154\ze(9)+2688\pi\be(8)-\tf{635}{4}\pi^2\ze(7)+16\pi^3\be(6)
    -\tf{217}{120}\pi^4\ze(5)\,,\\
&\S_{4,\bar{5}}=14308\ze(9)-4480\pi\be(8)+\tf{635}{3}\pi^2\ze(7)-80\pi^3\be(6)
    +\tf{217}{180}\pi^4\ze(5)\,,\\
&\S_{5,\bar{4}}=-17885\ze(9)+4480\pi\be(8)-\tf{635}{4}\pi^2\ze(7)
    +160\pi^3\be(6)+\tf{5}{3}\pi^5\be(4)\,,\\
&\S_{6,\bar{3}}=14308\ze(9)-2688\pi\be(8)+\tf{127}{2}\pi^2\ze(7)
    -160\pi^3\be(6)-5\pi^5\be(4)\,,\\
&\S_{7,\bar{2}}=-7154\ze(9)+896\pi\be(8)+80\pi^3\be(6)
    +5\pi^5\be(4)+\tf{61}{360}\pi^7G\,,\\
&\S_{8,\bar{1}}=2044\ze(9)-128\pi\be(8)-16\pi^3\be(6)-\tf{5}{3}\pi^5\be(4)
    -\tf{61}{360}\pi^7G\,.
\end{align*}
For the linear sums $\S_{\bar{p},q}$ of weight $8$, we have
\begin{align*}
&\S_{\bar{1},7}=-128\be(8)+\tf{127}{2}\pi\ze(7)-\tf{16}{3}\pi^2\be(6)
    -\tf{7}{45}\pi^4\be(4)-\tf{31}{7560}\pi^6G\,,\\
&\S_{\bar{2},6}=896\be(8)-381\pi\ze(7)+\tf{80}{3}\pi^2\be(6)
    +\tf{7}{15}\pi^4\be(4)+\tf{1}{120}\pi^6G\,,\\
&\S_{\bar{3},5}=-2688\be(8)+\tf{1905}{2}\pi\ze(7)-\tf{160}{3}\pi^2\be(6)
    +\tf{31}{4}\pi^3\ze(5)-\tf{7}{15}\pi^4\be(4)\,,\\
&\S_{\bar{4},4}=4480\be(8)-1270\pi\ze(7)+\tf{160}{3}\pi^2\be(6)-31\pi^3\ze(5)
    +\tf{1}{3}\pi^4\be(4)\,,\\
&\S_{\bar{5},3}=-4480\be(8)+\tf{1905}{2}\pi\ze(7)-\tf{80}{3}\pi^2\be(6)
    +\tf{93}{2}\pi^3\ze(5)+\tf{35}{48}\pi^5\ze(3)\,,\\
&\S_{\bar{6},2}=2688\be(8)-381\pi\ze(7)+16\pi^2\be(6)-31\pi^3\ze(5)-\tf{35}{24}\pi^5\ze(3)\,.
\end{align*}
For the linear sums $\S_{\bar{p},\bar{q}}$ of weight $8$, we have
\begin{align*}
&\S_{\bar{1},\bar{7}}=128\be(8)-\tf{32}{3}\pi^2\be(6)
    -\tf{8}{45}\pi^4\be(4)-\tf{4}{945}\pi^6G\,,\\
&\S_{\bar{2},\bar{6}}=-896\be(8)+\tf{256}{3}\pi^2\be(6)+\tf{8}{15}\pi^4\be(4)
    +\tf{1}{120}\pi^6G\,,\\
&\S_{\bar{3},\bar{5}}=2688\be(8)-\tf{800}{3}\pi^2\be(6)-\tf{8}{15}\pi^4\be(4)\,,\\
&\S_{\bar{4},\bar{4}}=-4480\be(8)+\tf{1280}{3}\pi^2\be(6)+3\pi^4\be(4)\,,\\
&\S_{\bar{5},\bar{3}}=4480\be(8)-\tf{1120}{3}\pi^2\be(6)-8\pi^4\be(4)\,,\\
&\S_{\bar{6},\bar{2}}=-2688\be(8)+176\pi^2\be(6)+8\pi^4\be(4)+\tf{4}{15}\pi^6G\,,\\
&\S_{\bar{7},\bar{1}}=896\be(8)-32\pi^2\be(6)-\tf{8}{3}\pi^4\be(4)-\tf{4}{15}\pi^6G\,.
\end{align*}
\end{document}